\documentclass{article}
\usepackage{amssymb,amsthm, amsmath}
\usepackage{graphicx}
\usepackage{color}

\usepackage[a4paper,bindingoffset=0.2in,%
            left=1in,right=1in,%
            footskip=.25in]{geometry}

\begin{document}

\newtheorem{lemma}{Lemma}
\newtheorem{theorem}{Theorem}
\newtheorem{cor}{Corollary}
\newtheorem{defn}{Definition}
\newtheorem{remark}{Remark}

\newcommand{\nwc}{\newcommand}
\nwc{\Levy}{L\'{e}vy}
\nwc{\Holder}{H\"{o}lder}
\nwc{\cadlag}{c\`{a}dl\`{a}g}
\nwc{\be}{\begin{equation}}
\nwc{\ee}{\end{equation}}
\nwc{\ba}{\begin{eqnarray}}
\nwc{\ea}{\end{eqnarray}}
\nwc{\la}{\label}
\nwc{\nn}{\nonumber}
\nwc{\Z}{\mathbb{Z}}
\nwc{\C}{\mathbb{C}}
\nwc{\E}{\mathbb{E}}
\nwc{\R}{\mathbb{R}}
\nwc{\N}{\mathbb{N}}
\nwc{\prob}{\mathbb{P}}
\nwc{\Skor}{\mathbb{D}}
\nwc{\PP}{\mathcal{P}}
\nwc{\M}{\mathcal{M}}
\nwc{\law}{\stackrel{\mathcal{L}}{\rightarrow}}
\nwc{\eqd}{\stackrel{\mathcal{L}}{=}}
\nwc{\vp}{\varphi}
\nwc{\Vp}{\Phi}
\nwc{\veps}{\varepsilon}
\nwc{\eps}{\ve}
\nwc{\qref}[1]{(\ref{#1})}
\nwc{\D}{\partial}
\nwc{\dnto}{\downarrow}
\nwc{\nsup}{^{(n)}}
\nwc{\ksup}{^{(k)}}
\nwc{\jsup}{^{(j)}}
\nwc{\nksup}{^{(n_k)}}
\nwc{\inv}{^{-1}}
\nwc{\one}{\mathbf{1}}
\nwc{\argmin}{\mathrm{arg}^+\mathrm{min}}
\nwc{\argmax}{\mathrm{arg}^+\mathrm{max}}
\nwc{\Rplus}{\R_+}
\nwc{\Rorder}{\R_<}
\nwc{\xx}{\mathbf{x}}
\nwc{\emp}{\mu}
\nwc{\empN}{F^n}
\nwc{\lossN}{L^n}
\nwc{\Lip}{\mathrm{Lip}}
\nwc{\BL}{\mathrm{BL}}
\nwc{\ddm}{d}
\nwc{\dif}{D}

\title{Concentration inequalities  for the  hydrodynamic limit of a two-species stochastic particle system  }

\author{Joseph Klobusicky\footnote{Department of Mathematics, The University of Scranton, Scranton PA, 18510  (\texttt{joseph.klobusicky@scranton.edu}).}}
\date{}
\maketitle

\newcommand{\no}{\textcolor{red}{[CLUNK!] }} 

\begin{abstract}
We study a stochastic particle system which is motivated from
grain boundary coarsening in two-dimensional networks. Each particles lives on the positive real line and is labeled as belonging to either Species 1 or Species 2. Species 1 particles drift at unit speed toward the origin, while Species 2 particles do not move. When a particle in Species 1 hits the origin, it is removed, and a randomly selected particle mutates from  Species 2 to Species 1. The process described is an example of a high-dimensional piecewise deterministic Markov process (PDMP), in which deterministic flow is punctuated with stochastic jumps.  Our main result is a proof of exponential concentration inequalities of the Kolmogorov-Smirnoff distance between empirical measures of the particle system and solutions of limiting  nonlinear kinetic equations. Our method of proof involves a time and space discretization of the kinetic equations, which we compare with the particle system to derive  recurrence inequalities for comparing total numbers in small intervals. To show these recurrences occur with high probability, we  appeal to a state dependent Hoeffding type inequality at each time increment. \end{abstract}

\textbf{Keywords}: piecewise deterministic Markov process, concentration inequality, kinetic theory, grain boundary coarsening, functional law of large numbers

\section{Introduction}

An important topic in  material science is    the coarsening
of network microstructures such as polycrystalline metals and soap froths.
 Through  heating of metals or gas diffusion of foams, coarsening
 is induced from the migration of network boundaries to minimize interfacial
surface energy.
While    tracking individual boundaries remains a multifaceted and active field
of research in numerical \cite{esedouglu2018algorithms} and geometric   \cite{ilmanen2019short} analysis,   in the 1950's von Neumann
\cite{von1952discussion}
 and Mullins \cite{mul56} proved a simple relation between the topology and
geometry
of a single cell in two-dimensional networks with isotropic surface tension evolving through curve-shortening flow.
  Specifically, a cell with area $A$ and $n$ sides has  a constant growth
rate 
\begin{equation}
\frac {dA}{dt} = c(n-6), \label{nminus6}
\end{equation}     
where $c$ is a material constant. When a cell with fewer than six sides shrinks
to a point, neighboring cells may change their number of sides   to maintain
the topological requirement that exactly three edges meet at at each  junction.  Therefore,
 a grain will typically change its number of sides, and therefore its  rate of
 growth, several times during the coarsening process.

Several physicists \cite{fra882,flyvbjerg1993model,marder1987soap,beenakker1986evolution}
used (\ref{nminus6})
as a starting point in  writing kinetic equations for densities $u_n(a,t)$
of cells having $n$
sides ($n$-gons) and area $a$ at time $t$. These take the form of constant convection
transport
equations with intrinsic flux terms, given by\begin{equation}
\label{eq:ke-big}
\partial_t u_n +c (n-6) \partial_a u_n = \sum_{l=2}^5  (l-6) u_l(0,t) \left(
\sum_{m=2}^M A_{lm}(t) u_m(a,t)\right), \quad n \ge 2. 
\end{equation}
Models
diverge in their choice of the matrix $A_{lm}$, which prescribes
mean field rules for how networks change topology when a grain vanishes.
In \cite{klobusicky2021two}, Menon, Pego, and the author presented a stochastic particle system, the  $M$-species model,  as an intermediate between kinetic equations and direct simulations of grain boundary coarsening.  The focus of  \cite{klobusicky2021two} was with well-posedness
of the limiting kinetic equations and simulations.  It remained, however, to provide
 estimates for convergence rates of the particle systems to their
hydrodynamic limits. A study was conducted in \cite{klobusicky2016concentration}
on a simplified model of one species, in which total loss of particles is  shown to be equivalent to a diminishing
urn process similar to Pittel's model of cannibalistic behavior \cite{pittel1987urn}.

In this paper, we build the groundwork for establishing concentration inequalities for the $M$-species model by restricting our attention to a model of two species.   Specifically, each particle
lives in $\mathbb R_+= [0, \infty)$ and is tagged as belonging to
either Species 1 or Species 2. Particles in Species 2 do not drift, while
particles in Species 1 drift at unit speed toward the origin.  When a particle
reaches the origin, it is removed, and a particle from Species 2 immediately mutates
into Species 1.
For a visual representation of the process in which mutations are represented
as  vertical jumps, see Fig. \ref{specfig}. The process just described can
be interpreted as a minimal model of network
coarsening, with   the behavior of particles in Species 1  analogous to the constant area decrease of cells with fewer
than six sides.
The removal of Species 1 and mutation of Species 2 are similar to the vanishing
of faces
and subsequent reassignment of neighboring cell topologies. 

The hydrodynamic limits for  densities $f_j(x,t)$ of particles in Species $j$ at position $x>0$ and time $t\ge 0$ are transport
equations with nonlinear intrinsic source terms, with\begin{align}
\partial_t f_1(x,t)- \partial_x f_1(x,t) &= \frac{f_1(0,t)f_2(x,t)}{N_2(t)},\label{kineq1}\\
\partial_t f_2(x,t) &= -\frac{f_1(0,t)f_2(x,t)}{N_2(t)},\label{kineq2}\\
f_1(x,0) = \bar f_1(x), \quad &f_2(x,0) = \bar f_2(x),\label{inits} 
\end{align}
where  $N_j(t) = \int_0^\infty f_j(x,t)dx$ is the total number of Species $j$. To allow for nondifferentiable initial conditions, we will exclusively work with  the integral form of (\ref{kineq1})-(\ref{inits}), written with Duhamel's formula as  
\begin{align}
f_1(x,t) &= \bar f_1(x+t)+\int_0^t f_2(x+t-s,s)\frac{f_1(0,s)}{N_2(s)}ds,
\label{kinint1}\\
f_2(x,t) &= \bar f_2(x)-\int_0^t f_2(x,s)\frac{f_1(0,s)}{N_2(s)}ds. \label{kinint2}
\end{align}

The main result for this paper is the convergence of empirical measures of the particle system to limiting kinetic
equations (\ref{kinint1})-(\ref{kinint2}). In Section \ref{sec:themodel}, we give a rigorous description of  the particle
system as a piecewise determinstic Markov process (PDMP),  lay out a deterministic discretization of the kinetic equations, and present the main results. Section \ref{disc} gives
a proof for Theorem \ref{detscheme}, which shows that the  discretization
converges to the kinetic equations at rate $\mathcal O(\delta+\omega(\delta,0))$, where $\delta$
is both the spatial and temporal step size of the  scheme, and $\omega(\delta,
0)$ is the modulus of continuity in the initial conditions. This is achieved through writing recurrence relations which compare total numbers restricted to intervals of size $\delta$.   
Section \ref{part} is a proof  of  Theorem \ref{finalh}, an exponential concentration inequality
(with respect to the initial total number of particles) between   the discretization
and particle system.  This involves similar recurrence inequalities seen in Section \ref{disc}, but now with an added task of showing that the inequalities occur under high probability.  We will need  to apply a generalization  of Hoeffding's inequality \cite{hoeffding1994probability}, 
a fundamental concentration inequality for sampling
without replacement, for establishing estimates on the total number of mutations occurring in intervals.    

Theorems \ref{detscheme}
and \ref{finalh} can be combined to produce our main result, Theorem \ref{majorthm},
which gives a concentration inequality between  empirical measures and
solutions of the kinetic equation. For sufficiently small $\varepsilon > 0$ and $n>n(\varepsilon)$,  the inequality takes the form
\begin{align}
&\mathbb P_n\left(\sup_{t\le T'}d((\mu_1(t), \mu_2(t)), (\mu^n_1(t), \mu^n_2(t)) ) \ge \varepsilon\right)
\le  \frac
{C} {\varepsilon^2}\exp(-\tilde C\varepsilon^5n).
\end{align}
Here, for $j = 1,2$ and time $0\le t\le T'$,   $ \mu^n_j(t)$ is the $n$-particle empirical measure for  positions of Species $j$ , and $\mu_j(t,dx) = f_j(t,x)dx$ where $f_j(x,t)$ is the solution of (\ref{kinint1})-(\ref{kinint2}). The metric $d$ is a sum of Kolmogorov-Smirnov metrics between measures of each species.  The constants $C$, $\tilde C$, and  $T'$  all depend on the initial conditions $\bar f$.   

We conclude with  Section \ref{prooftheo1}, in which we
derive an explicit solution of the kinetic equations and prove the well-posedness stated in Theorem \ref{solution}, relying on several well-known facts from renewal theory.
 We stress that explicit solutions are not used  in either the  proofs of Theorems
\ref{detscheme} and \ref{finalh}, as we hope to extend the methods
used here to  the $M$-species
 model which have no known
explicit solutions.

\section{Particle model and statement of results} \label{sec:themodel}

\begin{figure}
\begin{centering}
\includegraphics[width=.7\textwidth]{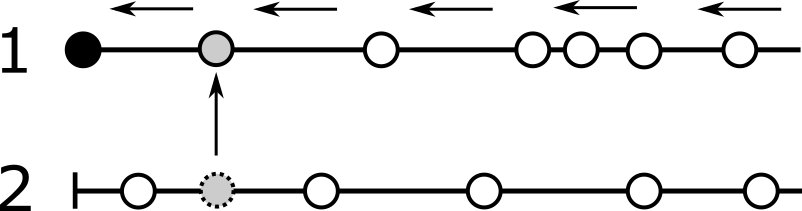}\label{specfig}
\caption{\textbf{A schematic  of the particle system}. Particles in Species
1 (top line) drift toward the origin at unit speed. When a particle (shown in black) reaches the origin,
it is removed. Another particle (shown in grey, with dashed outline immediately before mutation and solid outline after) is randomly selected from Species 2 (bottom line) to mutate into Species
1.   }
\end{centering}
\end{figure}

\subsection{A two-species particle system and its kinetic limit} \label{partsyskin}

We now  formally define  the stochastic process $\{X^n(t)\}_{t\ge 0}$  for an initial
system of $n$ particles. Each particle lives in one of two ordered copies
of $\mathbb R_+ = (0,\infty)$, which
we refer to as Species 1 and Species 2.
Since particles may be removed during the process,  the state space  $E^n$
consists of states  \begin{equation}
\mathbf x = \{(x_i,s_i): i =  1, \dots, |\mathbf
x|,\quad  |\mathbf x| \le n\}, \quad   
\end{equation}
with particle locations $x_i \in [0,\infty)$ and   labels 
$s_i\in \{1,2\}$ denoting each particle's species. The state space can be
expressed as a disjoint union of positive orthants
\begin{equation}
E^n = \coprod_{l+m\le n} E^n_{(l,m)},  \label{pdmpstates}
\end{equation}
with  $E^n_{(l,m)}= \mathbb R_+^l \times \mathbb R_+^m$ denoting positions
 for $l$ particles in Species 1 and $m$ particles in Species 2.

 Fix  an initial state $X^n(0) = \{(x_1^0,s_1^0), \dots,
(x_n^0,s_n^0)\} \in E^n$, and denote  $\alpha$  as an index for a particle in Species
1 closest to the origin, meaning  $x_\alpha^0 \le x_i^0$ for $i = 1, \dots,
 n$ and $s_\alpha^0 = 1$.  Now let $\tau_1 = x_\alpha^0$ denote the time
until a particle reaches the origin.  Define
$X^n(t) \in E^n$ for $t \in [0, \tau_1)$ deterministically by advecting particles
in Species 1 toward the origin at unit speed while keeping particles in Species
2 fixed:    \begin{equation}
 s_i(t) = s_i^0, \quad 
x_i(t) = \begin{cases}x_i^0-t, & s_i^0 = 1, \\
x_i^0, & s_i^0 = 2 \\
\end{cases} ,
\qquad i = 1, \dots, n.
\end{equation} 
 Randomness is introduced with a mutation at time $t =   \tau_1$.  At this time,
the smallest
particle in Species 1 has reached the origin, and  is removed from the system.
Furthermore, a particle $(x_\iota(\tau_1^-), 2)$ selected with uniform probability
from Species 2 mutates while keeping its position, meaning 
\begin{equation}
(x_\iota(\tau_1), s_\iota(\tau_1)) = (x_\iota(\tau_1^-), 1).
\end{equation}
Finally, particle indices $ i > \alpha$ decrement by one so that the  index
set is \{1, \dots, $n-1\}$.
The (now stochastic)
process then repeats deterministic drift until a particle from Species 1
reaches the origin
at some time $\tau_2$, again triggering a random mutation, and the process
continues  until there are no particles left in Species 2. For instances
in which  multiple  particles reach the origin simultaneously, particles
in    Species 2 are selected to mutate by sampling without replacement. The process $\{X(t)\}_{t \ge 0}$ is an example of the general $M$-species model, which  was shown in \cite{klobusicky2021two}  to be  a class of piecewise deterministic Markov processes (PDMPs).  The stochastic process induces a filtered probability space $(\Omega, \mathcal F, \mathbb P_n, \{\mathcal F(t)\}_{t\ge 0})$, where $\mathcal F$ is the natural filtration. Davis \cite{dav84} established that PDMPs are in fact strong Markov, which we will use in Section \ref{part} when considering events before and after certain mutation times.

At time $t \ge 0$, denote the number  of particles in Species $j$
as $N_j^n(t)$, and the total number as $N^n(t) = N_1^n(t)+N_2^n(t)$. Empirical
densities for species with $n$ initial particles are then defined as
\begin{equation}
\mu^n_j(t)= \frac 1n \sum_{i= 1}^{N^n(t)} \delta(x_i)\cdot \mathbf 1(s_i
= j), \quad
\quad j = 1,2.
\end{equation}

The differential form of the limiting equations of the infinite particle limit $n\rightarrow
\infty$ are given by equations  (\ref{kineq1})-(\ref{inits}), in which for each species $j$ the limiting measures
$\lim_{n\rightarrow \infty}\mu_j^n\rightarrow \mu_j$ are deterministic with densities $f_j(x,t).$
We will require that that  $N_1(0)+N_2(0) = 1$ and $N_2(0)>0$. The left hand sides of (\ref{kineq1}) and (\ref{kineq2})
represent the constant drift of  Species 1 toward the origin and zero drift in
Species 2. The right hand sides give the intrinsic flux arising from mutations
selected from a  normalized density of $f_2$, occurring at a  frequency of
 $f_1(0,t)$.
To allow for nondifferentiable initial data and solutions, we will use
 the integral form   (\ref{kinint1})-(\ref{kinint2}) with initial data $(\bar f_1, \bar f_2) \in Z^2$,
 where $Z$ denotes the cone  of positive, continuous, and locally bounded
functions under the $L^1(\mathbb R_+)$ norm topology.
Equations   (\ref{kinint1})-(\ref{kinint2}) reach a singularity when $N_2
= 0$, corresponding to when there are no more Species 2 particles to mutate.
This occurs at time
\be
  T(\bar f) = \sup_{t>0}\{N_2(t)>0\}.
\ee
  
The derivation of an explicit solution of  (\ref{kinint1})-(\ref{kinint2}) first relies on explicitly solving for  the
removal rate, which we write as $a(t) = f_1(0,t)$, and
subsequently the total loss 
\begin{equation}
L(t) = \int_0^ta(s)ds,
\end{equation}which may be interpreted  as an ``internal clock" to the
system, counting normalized total visits to the origin or, equivalently, total mutations.
                
\begin{theorem}\label{solution}
Let $\bar f = (\bar f_1, \bar f_2) \in Z^2 $ with $N_2(0)>0$, 

 (a) The removal rate $a(t) = f(0,t)$ and  $N_2(t)$ may be written in terms of the initial conditions as
\begin{align}\label{aexp}
a(t) &= \sum_{j = 0}^\infty \left(\frac{\bar f_2}{N_2(0)}\right)^{*(j)}(t)*\bar
f_1(t),\\
N_2(t) &= N_2(0)-L(t).\label{nform} \end{align}
Here, an exponent of $*(j)$ denotes $j$-fold self convolution (with $f^{*(0)}
=1$). For $0\le t<T(\bar f)$, the
solution $(f_1(x,t), f_2(x,t)) \in Z^2$  of  (\ref{kinint1})-(\ref{kinint2})
is unique 
and has the explicit form\begin{align}
f_1(x,t)&= \bar f_1(x+t)+\int_0^t\frac{\bar f_2(x+t-s)}{N_2(0)}a(s)ds,\label{f1exp}\\
f_2(x,t) &= \frac{N_2(t)}{N_2(0)}\bar f_2(x).  \label{f2exp}
\end{align}

(b) For $\bar f_1, \bar f_2 \in  Z$ and  $0\le t < T(\bar f)$,  (\ref{aexp})-(\ref{f2exp})
defines a continuous dynamical system in $Z^2$, so that the map $(\bar f_1,\bar
f_2,t)\mapsto (f_1(\cdot,t),f_2(\cdot ,t))$ is in $C(Z^2\times [0,T(\bar
f)], Z^2)$.
 \end{theorem}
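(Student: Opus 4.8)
The plan is to reduce the coupled system (\ref{kinint1})--(\ref{kinint2}) to a single scalar renewal equation for the removal rate $a(t)=f_1(0,t)$, solve that equation by Neumann iteration, and then read the remaining formulas off. The first observation is that, for each fixed $x>0$, (\ref{kinint2}) is a linear Volterra equation in $t$ for $s\mapsto f_2(x,s)$, with the \emph{same} (locally integrable) kernel $b(s):=f_1(0,s)/N_2(s)$ for every $x$. Its unique solution is $f_2(x,t)=\bar f_2(x)\exp(-\int_0^t b(s)\,ds)$, so the time dependence factors off the space dependence: $f_2(x,t)=\bar f_2(x)\rho(t)$ with $\rho(0)=1$. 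Integrating in $x$ gives $N_2(t)=N_2(0)\rho(t)$, hence $f_2(x,t)=\bar f_2(x)N_2(t)/N_2(0)$, which is (\ref{f2exp}). Integrating (\ref{kinint2}) in $x$ then collapses its right-hand side to $-\int_0^t f_1(0,s)\,ds=-L(t)$, giving (\ref{nform}); and substituting (\ref{f2exp}) into (\ref{kinint1}) gives (\ref{f1exp}). Evaluating (\ref{f1exp}) at $x=0$ produces the renewal equation
\[
a = \bar f_1 + \Big(\tfrac{\bar f_2}{N_2(0)}\Big)*a .
\]
Since $\int_0^\infty \bar f_2/N_2(0)=1$, this is a \emph{proper} renewal equation, and I will invoke the standard fact that the renewal function $U=\sum_{j\ge 0}(\bar f_2/N_2(0))^{*(j)}$ is locally finite and $a=U*\bar f_1$ is its unique locally bounded solution, which is exactly (\ref{aexp}). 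Because $\bar f_1\in Z\subset L^1(\R_+)$ and convolution against an $L^1$ function yields a continuous function, $a$ is continuous and nonnegative; defining $T(\bar f)$ by $L(T(\bar f))=N_2(0)$ (well posed since $L$ is continuous and nondecreasing), we get $N_2(t)>0$ on $[0,T(\bar f))$.

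To make part (a) rigorous I would argue both directions. For existence, one \emph{defines} $a,L,N_2,f_2,f_1$ by (\ref{aexp})--(\ref{f2exp}); positivity is immediate, local boundedness and joint continuity follow from the convolution structure, so $(f_1(\cdot,t),f_2(\cdot,t))\in Z^2$, and a direct substitution (using associativity of convolution and Fubini) verifies (\ref{kinint1})--(\ref{kinint2}). For uniqueness, any $Z^2$-solution on $[0,T(\bar f))$ is subject to the manipulations above, so its trace $f_1(0,\cdot)$ solves the same renewal equation and hence equals $a$; then $f_2$ and $f_1$ are forced by (\ref{f2exp}) and (\ref{f1exp}). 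The delicate point here is the a priori regularity needed to run the integrating-factor argument — continuity in $t$ of $f_1(0,\cdot)$ and $N_2(\cdot)$, and strict positivity of $N_2$ on compact subintervals — which I would extract from the integral equations themselves (they already force $L^1$-continuity in $t$); I expect this bookkeeping, rather than the renewal-equation core, to be the main obstacle in (a).

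For part (b), continuity in $t$ of $t\mapsto(f_1(\cdot,t),f_2(\cdot,t))$ in the $L^1$-topology follows from the explicit formulas (\ref{f1exp})--(\ref{f2exp}) together with continuity of translation and of convolution in $L^1(\R_+)$. Continuity in the initial datum $(\bar f_1,\bar f_2)$ hinges on the series (\ref{aexp}): with $M=\sup_{[0,t]}\bar f_2/N_2(0)$ one has, for $j\ge 1$, the estimate $\|(\bar f_2/N_2(0))^{*(j)}*\bar f_1\|_{L^\infty[0,t]}\le \|\bar f_1\|_{1}\,M^{j}t^{j-1}/(j-1)!$, so the series converges uniformly on compact time intervals, uniformly over a neighborhood of $(\bar f_1,\bar f_2)$ in $Z^2$; since each term depends continuously on the datum, so does $a$, and hence so do $L$, $N_2$, $f_2$ and $f_1$. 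Finally, to reconcile this with the fact that the time interval $[0,T(\bar f))$ itself moves with $\bar f$: if $t<T(\bar f)$ then $L_{\bar f}(t)<N_2(0)$ strictly, and by the continuity just established the same strict inequality — hence $t<T(\bar f')$ — persists for $\bar f'$ near $\bar f$, so the map is well defined and continuous on a full neighborhood of each admissible pair $(\bar f,t)$. Handling this varying domain, alongside the regularity issue flagged in (a), is where I anticipate the genuine care is needed; everything else is routine convolution estimates.
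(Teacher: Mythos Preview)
Your derivation for part (a) is essentially the paper's, with one cosmetic reordering: you solve the scalar Volterra equation for $f_2(x,\cdot)$ by an integrating factor to see the separation $f_2(x,t)=\bar f_2(x)\rho(t)$ first and then integrate in $x$, whereas the paper integrates (\ref{kinint2}) in $x$ first to obtain $N_2(t)=N_2(0)-L(t)$ and then substitutes back. Both routes land on the same renewal equation for $a$ and invoke the same standard renewal-theory solution, so nothing substantive separates them.

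Part (b), however, has a real gap. Your tail estimate $\|(\bar f_2/N_2(0))^{*(j)}*\bar f_1\|_{L^\infty[0,t]}\le \|\bar f_1\|_1\, M^j t^{j-1}/(j-1)!$ is correct pointwise, but the constant $M=\sup_{[0,t]}\bar f_2/N_2(0)$ is \emph{not} controlled on an $L^1$-neighborhood of $\bar f_2$: the space $Z$ carries the $L^1$ topology, and nearby elements in $L^1$ can have arbitrarily large sup on $[0,t]$. So the claimed uniform convergence of the series ``uniformly over a neighborhood of $(\bar f_1,\bar f_2)$ in $Z^2$'' does not follow, and the continuity-in-initial-data step breaks. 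The paper avoids this by a probabilistic bound: writing $\hat f_2=\bar f_2/N_2(0)$ as a density for i.i.d.\ $X_i$, Markov's inequality gives $\int_0^t \hat f_2^{*(k)}\le e^{t}\,\big(\mathbb E[e^{-X_1}]\big)^k$, and the functional $p\mapsto \mathbb E[e^{-X_1^{(p)}}]=\int_0^\infty e^{-x}p(x)\,dx$ \emph{is} $L^1$-continuous and strictly less than $1$. That yields geometric decay with a rate that is stable under $L^1$ perturbation, which is exactly what your factorial estimate lacks. Replace your sup-norm bound with this Laplace-transform bound (or any equivalent $L^1$-continuous tail control) and the rest of your argument for (b), including the varying-domain remark, goes through.
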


The proof for Theorem \ref{solution} is postponed until Section \ref{prooftheo1},
as neither
the explicit solution nor its derivation will be used in future results. The well-posedness of (\ref{kinint1})-(\ref{kinint2}) is invoked for defining constants in the convergence analysis of Sections \ref{disc} and \ref{part}. Note, however, that the particle system in this paper  is a special case of the $M$-species model developed in \cite{klobusicky2021two}, in which  well-posedness is derived through a Banach fixed point argument, rather than appealing to
an explicit solution. 

\subsection{Discretization scheme of kinetic equations}

\label{discdetoutline}
To enable us to write down recurrence inequalities involving total numbers restricted to an interval, we will 
 construct a deterministic scheme for  (\ref{kinint1})-(\ref{kinint2}).  We do so with measures $\tilde \mu_1(t, \cdot;\delta),\tilde \mu_2(t, \cdot;\delta) \in \mathcal{M}(\mathbb
R_+)$ at time $t>0$ which are piecewise constant in  $\delta>0$ sized time intervals $\Delta t_k = [\delta (k-1), \delta k)$ for $k \ge 1$. Note that while these measures depend on $\delta$, we will often suppress this argument in the notation for simplicity in presentation.

Initial measures are given by  
\begin{equation}
\tilde \mu_j(t,\cdot) = \bar \mu_j, \quad t \in [0, \delta), \quad j = 1,2,
\label{discdetinit} \end{equation}
with the requirement that $(\bar \mu_1+\bar \mu_2)([0, \infty)) = 1$.  
For each time step  $t_k = k\delta$, we define the incremental loss  over
a time interval as
\begin{equation}\label{diffl}
\Delta \tilde L(t_k) =\begin{cases}0, & k = 0,  \\
\tilde \mu_1(t_{k-1},[0,\delta)), &  k \ge 1. \\
\end{cases} \quad   
\end{equation}
Total number for Species 2 then decreases by  the incremental loss, with

\begin{equation}
\tilde N_2(t_k) =\begin{cases} \tilde \mu_2(0, [0, \infty)), & k = 0, \\
 \tilde N_2(t_{k-1})-\Delta\tilde L(t_{k}), & k \ge 1. \\
\end{cases} \quad 
\end{equation}
Measures update  by  a shift of  distance $\delta$ toward the origin in Species
1 followed by mutation in Species 2  of total number $\Delta\tilde
L(t_{k})$.
Therefore, for $t \in [t_{k},t_{k+1})$, $k \ge 1$, we update with 
\begin{align}
\tilde \mu_1(t) &=  S_{\delta}(\tilde \mu_2(t_{k-1}))+\frac{\Delta\tilde
L(t_{k})}{\tilde
N_2(t_{k-1})}\tilde \mu_2(t_{k-1}),\label{updatediscneg1}\\
\tilde \mu_2(t) &=  \tilde \mu_2(t_{k-1})\left(1-\frac{\Delta\tilde L(t_{k})}{\tilde
N_2(t_{k-1})}\right). \quad
 \label{updatedisc0}
 \end{align}
Here $S_h$ is the left translation operator acting on measures, defined through
the cumulative
function $F_\mu$ of a measure $\mu \in \mathcal M(\mathbb R_+)$ by 
\begin{equation}
S_h (F_\mu(x)) =
F_\mu(x+h)-F_\mu(h), \quad h \ge 0 .
\end{equation}
Since Species 1 shifts before adding
mutated particles from Species 2, we have a conserved quantity $\tilde N_1(t)
= \tilde N_1(0)$, and thus the total number 
\begin{equation}
\tilde N(t_k):= \tilde N_1(t_k)+\tilde N_2(t_k) = 1-\sum_{i = 1}^{k-1}\Delta\tilde
 L(t_i).
\end{equation}This scheme remains well-defined as long as $ \tilde N_2(t)>0$.
  It is clear that   
\begin{equation}
\tilde N_2(t) \ge \tilde N_2(0)-(\bar \mu_1([0,t])+\bar \mu_2[0,t]), \label{n2tildefinite}
\end{equation}
so for initial measures in $Z$ we are always able to find a nonzero length interval
of existence   $[0,T_1(\bar f))$, with 
\begin{equation}
T_1(\bar f) = \sup\{t:\tilde N_2(t)>\tilde N_2(0)/2\}.
\end{equation}

\subsection{Main results: convergence rates and concentration inequalities}

Our first main result gives a comparison between the deterministic discretization
and solutions of (\ref{kinint1})-(\ref{kinint2}) using the Kolmogorov-Smirnov
(KS) metric. For two measures
$\nu, \eta \in \mathcal M(\mathbb R_+), $ with associated cumulative functions
$F_{\nu}(x) = \nu([0,x])$ and $F_{\eta}(x) = \eta([0,x])$ the KS metric is
defined as
\begin{equation}
d_{KS}(\nu, \eta) =  \sup_{x \in \mathbb R}|F_{\nu}(x)-F_{\eta}(x)|.
\end{equation}
For handling convergence of both species, we define a metric on $\mathcal
M(\mathbb R_+) \times \mathcal M(\mathbb R_+)$ between $\nu =  (\nu_1, \nu_2)$ and $\eta = (\eta_1,\eta_2)$ by
\begin{equation}
d( \nu,\eta) =d_{KS}(\nu_1, \eta_1) + d_{KS}(\nu_2,
\eta_2).   \label{ourmetric}
\end{equation}  
As we are often working with measures, we define measures associated
to solutions of (\ref{kinint1})-(\ref{kinint2}) by
\begin{equation}
\mu_j(t,dx) = f_j(x,t)dx \quad j = 1,2.
\end{equation}
We will also need to track the modulus of continuity for solutions.
For  densities $(f_1(x,t), f_2(x,t))$, we let \begin{equation}\label{moc}
\omega(\delta, t) = \sup_{x \in  \mathbb R_+} \sum_{j = 1}^2|f_j(x+\delta,t)-f_j(x,t)|.
\end{equation}
We will impose that  initial conditions  have compact support for the sake
of clarity, as  calculations relating  convergence and the decay of initial
conditions can become  rather technical. Since initial conditions are continuous,
compact support implies that $\omega(\delta, 0) \rightarrow 0$ as $\delta\rightarrow
0$.

 \begin{theorem} \label{detscheme}
 
 Let $ \bar f = (\bar f_1, \bar f_2) \in Z^2$ have compact support with $N_2(0)>0$. For $t< T(\bar f)$, let   $\mu(t) = (\mu_1(t), \mu_2(t))$  be measures for
the unique solution to (\ref{kinint1})-(\ref{kinint2}). Let measures $\tilde \mu(t ) = (\tilde \mu_1(t ), \tilde \mu_2(t))$ for the discretization scheme with  step size of $\delta>0$ also have initial conditions $\bar f$. Then there exist positive constants $\delta^d$ and $C^d$ such that for  all $\delta \in (0, \delta^d)$ and $T' \in [0,T(\bar f))$,
 \begin{equation}
 \sup_{t \in [0, T']}d(\tilde \mu(t ),\mu(t))\le  C_d( \delta+ \omega(\delta,0)). \label{detthm2}
 \end{equation} 
 The constants $C^d$ and $\delta^d$ are dependent on $L^\infty$ and $L^1$ bounds of solutions in Theorem \ref{solution}, and the compact support bound  $M = \sup\{x:\sum_{j = 1,2}\bar f_{j}(x) >0\}$.  
\end{theorem}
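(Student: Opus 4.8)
The plan is to prove a slightly stronger estimate than \eqref{detthm2}, namely an $O(\delta+\omega(\delta,0))$ bound on $\|f_j(\cdot,t)-\tilde f_j(\cdot,t)\|_\infty$ for the densities, and then read off the Kolmogorov--Smirnov estimate using compact support. The reason to work with the sup norm of densities is structural: the left-shift operator $S_\delta$ governing the drift of Species~1 is an isometry of $L^\infty(\mathbb R_+)$, so the boundary correction that plagues a direct comparison of cumulative distribution functions under a shift never appears, and — crucially — the scheme's loss increment $\Delta\tilde L(t_k)=\tilde\mu_1(t_{k-1},[0,\delta))$ differs from the exact removal $\Delta L(t_k)=\int_{t_{k-1}}^{t_k}f_1(0,s)\,ds$ only by $\delta$ times a difference of densities at the origin, up to $O(\delta^2+\delta\,\omega(\delta,0))$. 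That extra factor of $\delta$ is exactly what keeps the recurrence from exploding over the $\sim T'/\delta$ steps.

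First I would record a priori bounds on $[0,T']$ for a fixed $T'<T(\bar f)$. From the regularity furnished by Theorem~\ref{solution} together with \eqref{kinint1}--\eqref{kinint2}, there are constants $c_0>0$ and $C_\infty<\infty$ with $N_2(t)\ge c_0$, $\|f_j(\cdot,t)\|_\infty\le C_\infty$ and $\mathrm{supp}\,f_j(\cdot,t)\subseteq[0,M]$ for $t\le T'$; a Gr\"onwall argument on \eqref{kinint2} and then \eqref{kinint1} gives $\omega(\delta,t)\le C\,\omega(\delta,0)$ and, for $a(t)=f_1(0,t)$, $|a(t+h)-a(t)|\le C(\omega(\delta,0)+\delta)$ whenever $h\le\delta$. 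The same bounds hold for the discretization \emph{provided} $\tilde N_2(t_j)\ge c_0/2$ at the earlier steps: \eqref{updatedisc0} gives $\tilde f_2(\cdot,t_k)=c_k\bar f_2$ with $c_k\in[0,1]$, and unrolling \eqref{updatediscneg1} expresses $\tilde f_1(\cdot,t_k)$ as a finite sum of shifts of $\bar f_1,\bar f_2$ with weights summing to at most $1+2/c_0$, so $\|\tilde f_1(\cdot,t_k)\|_\infty\le C_\infty'$, $\mathrm{supp}\,\tilde f_j(\cdot,t_k)\subseteq[0,M]$, and the modulus of $\tilde f_1(\cdot,t_k)$ is $\le C\,\omega(\delta,0)$; in particular $\Delta\tilde L(t_k)=\delta\,\tilde f_1(0,t_{k-1})+O(\delta\,\omega(\delta,0))$ while $\Delta L(t_k)=\delta\,a(t_{k-1})+O(\delta^2+\delta\,\omega(\delta,0))$.

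Next is the recurrence. Evaluating \eqref{kinint1} at $t_k$ and $t_{k-1}$ produces the exact one-step identity $f_1(x,t_k)=f_1(x+\delta,t_{k-1})+\int_{t_{k-1}}^{t_k}f_2(x+t_k-s,s)\,a(s)/N_2(s)\,ds$, which mirrors the scheme's update $\tilde f_1(\cdot,t_k)=S_\delta\tilde f_1(\cdot,t_{k-1})+\tilde\beta_k\tilde f_2(\cdot,t_{k-1})$ (with $S_\delta$ acting on densities as left translation and $\tilde\beta_k=\Delta\tilde L(t_k)/\tilde N_2(t_{k-1})$); the analogous identity for $f_2$ follows from \eqref{kinint2}. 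Put $G(k):=\sum_{j=1,2}\|f_j(\cdot,t_k)-\tilde f_j(\cdot,t_k)\|_\infty$. Subtracting the exact and discrete updates, replacing $s$ by $t_{k-1}$ in the source integrands and dropping the $O(\delta)$ spatial shift there (the errors are $O(\delta^2+\delta\,\omega(\delta,0))$ by the a priori moduli and $\Delta L(t_k)=O(\delta)$), and using that $S_\delta$ does not increase $\|\cdot\|_\infty$, one expects
\begin{equation*}
G(k)\le(1+C\delta)\,G(k-1)+C\bigl(\delta^2+\delta\,\omega(\delta,0)\bigr),
\end{equation*}
where the benign $(1+C\delta)$ coefficient comes from $|\Delta L(t_k)-\Delta\tilde L(t_k)|\le\delta\,|f_1(0,t_{k-1})-\tilde f_1(0,t_{k-1})|+C(\delta^2+\delta\,\omega(\delta,0))\le\delta\,G(k-1)+C(\delta^2+\delta\,\omega(\delta,0))$ and from $|N_2(t_{k-1})-\tilde N_2(t_{k-1})|\le M\,G(k-1)$, both entering through the mutation-rate discrepancy $r_k:=\tilde\beta_k-\Delta L(t_k)/N_2(t_{k-1})$. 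Since $G(0)=0$, discrete Gr\"onwall over $k\le T'/\delta$ gives $G(k)\le(\delta+\omega(\delta,0))e^{CT'}=:C'(\delta+\omega(\delta,0))$. This is organized as a single induction, the inequality $\tilde N_2(t_k)\ge N_2(t_k)-M\,G(k)\ge c_0-MC'(\delta+\omega(\delta,0))\ge c_0/2$, valid for $\delta<\delta^d$ (possible since $\omega(\delta,0)\to0$ as $\delta\to0$ by compact support), simultaneously keeping the scheme well defined on $[0,T']$ and supplying the a priori bounds for the next step.

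Finally I would pass back to the stated metric: by compact support $d_{KS}(\mu_j(t_k),\tilde\mu_j(t_k))\le\int_0^M|f_j(\cdot,t_k)-\tilde f_j(\cdot,t_k)|\,dx\le M\,G(k)\le C'M(\delta+\omega(\delta,0))$, and for $t$ inside a cell $[t_k,t_{k+1})$ both $\mu$ (through \eqref{kinint1}--\eqref{kinint2}) and $\tilde\mu$ (being piecewise constant) move by at most $O(\delta)$ in $d$, which yields \eqref{detthm2} with $C^d,\delta^d$ depending only on $M$ and the $L^1,L^\infty$ bounds of Theorem~\ref{solution}. I expect the closure of the recurrence to be the main obstacle: a direct comparison of cumulative functions forces a factor $2$ on the shifted term and a factor of order $1$ (not $\delta$) on the loss discrepancy $\Delta L-\Delta\tilde L$, which would blow up geometrically over $\sim T'/\delta$ steps; it is precisely working in $L^\infty$ of the densities — where $S_\delta$ is an isometry and the loss discrepancy carries a gain of one power of $\delta$ — that makes the Gr\"onwall step go through, and the most delicate bookkeeping is verifying that the frozen-coefficient errors in the source terms are genuinely $O(\delta^2+\delta\,\omega(\delta,0))$, so that $\omega(\delta,0)$ enters with a power that survives summation over the steps.
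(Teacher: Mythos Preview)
Your argument is correct, and it takes a genuinely different route from the paper's proof.

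The paper tracks the interval quantity $h_\delta(t_k)=\sup_{l}\sum_j|\mu_j(t_k,I_l)-\tilde\mu_j(t_k,I_l)|$ (essentially $\delta$ times your $G(k)$), controls $|N_2(t_{k-1})-\tilde N_2(t_{k-1})|$ by \emph{accumulated} loss differences $\sum_{i<k}h_\delta(t_i)+O(\delta)$, and thereby obtains a recurrence with a memory term $C\delta^2\sum_{i<k}h_\delta(t_i)$; this is closed by comparison with a second-order linear ODE. It then sums over $\lceil M/\delta\rceil$ intervals to reach the KS bound, and extends from a short interval $[0,T_2]$ to $[0,T']$ by a stitching argument. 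By contrast, you work directly in $L^\infty$ of the densities, exploit that the left shift $S_\delta$ does not increase $\|\cdot\|_\infty$, and bound $|N_2-\tilde N_2|\le M\,G(k-1)$ by a single spatial integral; this collapses the recurrence to the first-order form $G(k)\le(1+C\delta)G(k-1)+C(\delta^2+\delta\,\omega(\delta,0))$, handled by ordinary discrete Gr\"onwall, with the lower bound $\tilde N_2\ge c_0/2$ carried along in the same induction so no stitching is needed.

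What your approach buys is a shorter, structurally cleaner proof of Theorem~\ref{detscheme}; what the paper's interval-based approach buys is that it is written in exactly the variables (counts on cells $I_l$, loss increments) that recur in the stochastic comparison of Section~\ref{part}, so Lemmas~\ref{mbounds}--\ref{hasympt} serve double duty there. Two small remarks: $S_\delta$ on $L^\infty(\mathbb R_+)$ is a contraction rather than an isometry (which is all you use), and the claim that $\tilde f_1(\cdot,t_k)$ inherits modulus $\le C\,\omega(\delta,0)$ relies on the weight bound $\sum_j\tilde\beta_j\le 2/c_0$, which you state but should be flagged as requiring the inductive hypothesis $\tilde N_2\ge c_0/2$ at all earlier steps.
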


 To set up for the next main result, we generate initial conditions $\bar \mu^n = (\bar \mu^n_1,  \bar \mu^n_2)$
for the particle system with uniform spacing through the cumulative distribution functions $F_j(x) = \mu_j([0,x])$ for $j = 1,2$. The explicit  particle positions are
\begin{align}
( x_i(0), s_i(0)) = \begin{cases}(F_1^{-1}(i/n),1) & i = 1, \dots,
\lfloor nN_1(0) \rfloor, \\
(F_2^{-1}(( nN_1(0)-i)/n),2) & i = \lfloor nN_1(0) \rfloor+1, \dots, n, \\
\end{cases} \label{stochassign}
\end{align}
where $F^{-1}$ is the quantile function of a cumulative function $F$. 
For initial conditions
$(\bar f_1, \bar f_2)\in Z^2$ with $\bar \mu_j(dx) = \bar f_jdx$, it is easy to show that  $d(\bar
\mu^n,  \bar \mu) \rightarrow 0$ in law as $n\rightarrow \infty$, and that  there exists a positive
integer $n_{0}(\bar f)$ such that if $n> n_{0}(\bar f)$, 
\begin{equation}
\bar \mu_1^n([0,T_1(\bar
f)])+\bar \mu_2^n([0,T_1(\bar f)]) \le 2N_2^n(0)/3 \quad \hbox{ for } n>n_{0}(\bar
f), \label{nfcond} 
\end{equation}
meaning that there is always a particle available to mutate at each jumping time
in $[0, T_1(\bar f)]$, and the process  therefore does not reach its cemetery
state.

The next theorem, which we show in Section \ref{part},
 gives an exponential concentration inequality between the deterministic discretization and the particle system.

 \begin{theorem} \label{finalh}
 Let $ \bar f = (\bar f_1, \bar f_2) \in Z^2$ have compact support with $N_2(0)>0$. For $t<
T(\bar f)$, let $\mu^{n}(t) = (\mu_1^{n}(t), \mu_2^{n}(t))$  be empirical measures for $X^n(t)$  generated from (\ref{stochassign}) with $\bar f \in Z^2$. Let measures $\tilde
\mu(t ) = (\tilde \mu_1(t ), \tilde \mu_2(t))$ for the discretization scheme
with  step size of $\delta>0$ have initial conditions $\bar f$. Then there exist positive constants $\delta^p, C^p, C^p_2,C_3^p$ such that for all $\delta\in (0,\delta^p)$
there exists   $n^p(\delta)>0$ such that for all integers $n>n^p(\delta)$ and $T'<T(\bar f),$ 
\begin{equation}
\mathbb P_n\left(\sup_{t\le T'}d(\tilde \mu(t ),\mu^n(t))\ge C^{p}(\delta+\omega(0,\delta)\right) \le \frac
{C^{p}_2} {\delta^2}\exp(-C^{p}_3\delta^5n).  \label{detthm3}
\end{equation}
The constants $\delta^p, C^p, C^p_2,C_3^p$ are all dependent on $L^\infty$ and $L^1$
bounds of solutions in Theorem \ref{solution} with initial conditions of
$\bar f = (\bar f_1, \bar f_2)$, and the compact support bound  $M = \sup\{x:\sum_{j
= 1,2}\bar f_{j}(x) >0\}$.  
\end{theorem}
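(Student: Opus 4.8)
The plan is to compare the particle system $X^n$ and the deterministic scheme $\tilde\mu$ on the common time grid $\{t_k = k\delta\}$, species by species and on each dyadic-type spatial cell $[j\delta,(j+1)\delta)$, by establishing a \emph{stochastic recurrence inequality} for the cell-count discrepancies that mirrors the deterministic recurrence used to prove Theorem~\ref{detscheme}. Concretely, for each $k$ and each cell index $j$ I would set
\[
D_k := \max_j \big| \mu^n_1(t_k)([j\delta,(j+1)\delta)) - \tilde\mu_1(t_k)([j\delta,(j+1)\delta)) \big| + (\text{same for Species 2}),
\]
and show that, on a high-probability event, $D_{k+1} \le (1+C\delta)D_k + \text{(deterministic }O(\delta^2+\delta\,\omega(\delta,0))\text{ error)} + \xi_k$, where $\xi_k$ is the random fluctuation coming from the mutation step at time $t_k$. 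Since $d_{KS}$ on $[0,M]$ is controlled by $M/\delta$ times the worst cell discrepancy plus a $\delta$ term, iterating a Gr\"onwall-type bound over the $O(1/\delta)$ steps gives $\sup_{t\le T'} d(\tilde\mu(t),\mu^n(t)) \le C^p(\delta+\omega(\delta,0))$ provided every $\xi_k$ is at most $O(\delta^2)$, say, and the accumulated drift stays bounded.

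The stochastic heart of the argument is controlling $\xi_k$. Between $t_{k-1}$ and $t_k$, the particle system experiences some number $\Delta L^n(t_k)$ of hitting times at the origin (approximately $\mu^n_1(t_{k-1})([0,\delta)) \approx \Delta\tilde L(t_k)$ of them, up to the already-controlled error), and for each hit a Species~2 particle is chosen by sampling \emph{without replacement}. The number of these chosen particles that land in a fixed cell $[j\delta,(j+1)\delta)$ is a hypergeometric-type quantity with conditional mean close to $\Delta\tilde L(t_k)\,\tilde\mu_2(t_{k-1})([j\delta,(j+1)\delta))/\tilde N_2(t_{k-1})$, which is exactly the mutation term in \eqref{updatediscneg1}. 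I would apply the stated generalization of Hoeffding's inequality for sampling without replacement, \emph{conditionally on $\mathcal F(t_{k-1})$} (using the strong Markov property of the PDMP as flagged after Davis's theorem), to get that the deviation of this count from its conditional mean exceeds $\delta^2 n$ with probability at most $2\exp(-c\,\delta^4 n)$ — the extra $\delta^2$ relative to the naive $\exp(-c\delta^3 n)$ coming from the fact that the effective sample size at step $k$ is only $\sim\delta n$ (there are only $\sim\delta n$ mutations), so Hoeffding over $\delta n$ samples with deviation $\delta^2 n = (\delta)\cdot(\delta n)$ costs $\exp(-(\delta)^2\cdot \delta n) = \exp(-\delta^3 n)$; then taking the union over the $O(1/\delta)$ cells and $O(1/\delta)$ time steps and also handling Species~1's discrepancy (whose new mass near the origin is just advected, hence deterministic up to boundary effects) contributes the $1/\delta^2$ prefactor, while the loss of one more $\delta$ factor — to $\delta^5 n$ — comes from needing the cell-discrepancy threshold itself to shrink like $\delta$ so that $M/\delta$ times it is still $O(1)$, forcing the per-step deviation budget down to $O(\delta^2\cdot\delta)$. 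I will also need a companion concentration bound showing $\Delta L^n(t_k)$ itself concentrates around $\mu^n_1(t_{k-1})([0,\delta))$, which is essentially deterministic given $\mathcal F(t_{k-1})$ (it is the number of Species~1 particles within distance $\delta$ of the origin), so that step contributes only to the deterministic error, not to $\xi_k$.

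The main obstacle is bookkeeping the interaction between the growing discrepancy $D_k$ and the \emph{random} number of mutations: the conditional mean of the cell count I want to control is $\Delta L^n(t_k)$ times a ratio of particle-system quantities, not discretization quantities, so I must first replace $\Delta L^n(t_k)$, $N^n_2(t_{k-1})$, and $\mu^n_2(t_{k-1})([j\delta,\cdot))$ by their scheme counterparts, each substitution injecting a term proportional to $D_{k-1}$ (this is what produces the $(1+C\delta)$ factor and is exactly parallel to the deterministic proof), and only \emph{then} apply Hoeffding to the residual genuinely-random fluctuation around the correct conditional mean. Keeping the denominators $\tilde N_2, N^n_2$ bounded away from $0$ is what restricts us to $T'<T(\bar f)$ and uses \eqref{nfcond} together with $n>n^p(\delta)$; the role of $n^p(\delta)$ is precisely to guarantee both that $\delta n$ is large enough for the Hoeffding bounds to be meaningful and that $d(\bar\mu^n,\bar\mu)$ is small enough at initialization to start the recurrence with $D_0 = O(\delta)$. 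Finally, I would absorb the $\sup_{t\le T'}$ over the continuum of times into the grid by noting that within $[t_k,t_{k+1})$ the scheme is constant and the particle system moves Species~1 by at most $\delta$, contributing a further harmless $O(\delta)$ to the KS distance.
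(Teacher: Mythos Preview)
Your proposal is essentially the paper's own strategy: define the cell-wise discrepancy $D_k$ (the paper's $h_\delta^n$), establish a stochastic recurrence $D_k \le D_{k-1} + \Pi^n(t_k)$ mirroring the deterministic one, control the random increment $\Pi^n$ via a Hoeffding-type bound on the mutation counts, and union-bound over the $O(1/\delta)$ cells and $O(1/\delta)$ time steps to get the $1/\delta^2$ prefactor; the exponent $\delta^5 n$ arises exactly because the per-step deviation budget is $\varepsilon \sim \delta^3$ against an effective sample size $\bar L n \sim \delta n$, giving $\exp(-c\,\varepsilon^2 n/\bar L) = \exp(-c\,\delta^5 n)$.

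One technical point you gloss over and should be prepared to handle carefully: conditioning on $\mathcal F(t_{k-1})$ alone does \emph{not} make the successive mutation indicators within $[t_{k-1},t_k)$ independent Bernoulli with fixed parameter, because each mutation changes the Species~2 population and hence the selection probability for the next one (and a freshly mutated particle near the origin can itself hit $0$ before $t_k$, so even the number of draws is state-dependent). The paper deals with this not by a direct hypergeometric/Hoeffding bound but by a stochastic-ordering argument: one shows, using strong Markov at each mutation time $\tau_i^k$, that the cumulative count $\sum_{i\le j} Q^i_l$ is stochastically dominated by a sum of i.i.d.\ Bernoulli$(\bar p_l)$ variables (and dominated from below by Bernoulli$(\underline p_l)$), where $\bar p_l,\underline p_l$ are pathwise envelopes for $P_k^j(\tau,I_l)$ over $\tau\in\Delta t_k$. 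This requires a separate growth lemma ($m_\delta^n(t_k)\le C\delta$ with high probability) to produce those envelopes, and a second version of the domination lemma in which the envelopes themselves only hold on a high-probability event rather than pathwise. Your remark that $\Delta L^n(t_k)$ is ``essentially deterministic given $\mathcal F(t_{k-1})$'' similarly needs a short concentration argument of its own (the excess over $\mu_1^n(t_{k-1},[0,\delta))$ is bounded by the number of mutations landing in $[0,\delta)$, which is $O(\delta^2)$ with high probability). None of this changes your architecture, but it is where the real work in the paper's Section~4 lies.
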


From Theorems \ref{detscheme} and \ref{finalh}, it is straightforward to obtain our main concentration inequality for initial conditions which are also locally Lipschitz.

\begin{theorem} \label{majorthm} 
Let $\bar f = (\bar f_1, \bar f_2) \in Z^2$ with   $N_2(0)>0$ be both compactly supported and  locally Lipschitz, so that  $\omega(0,\delta) \le   C^{\omega}
\delta$. For   $C^p$ and $ \delta^p$ determined from Theorem \ref{finalh}, let $\varepsilon \in (0,2C^pC^{\omega}
\delta^p)$. Then there exist positive constants $C, \tilde C$ and    $N(\varepsilon)>0$ such that for all integers $n>N(\varepsilon)$ and $T'<T(\bar f),$
\begin{align}
&\mathbb P_n\left(\sup_{t\le T'}d(\mu(t), \mu^n(t) )\ge \varepsilon\right)
\le  \frac
{C} {\varepsilon^2}\exp(-\tilde C\varepsilon^5n). \label{mainconc}
\end{align}
  
\end{theorem}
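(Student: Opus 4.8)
The plan is to combine Theorems~\ref{detscheme} and \ref{finalh} by the triangle inequality for the metric $d$, converting the $\delta$-parametrized bounds into $\varepsilon$-parametrized ones via the local Lipschitz assumption $\omega(0,\delta)\le C^\omega\delta$. First I would note that under this assumption both theorems produce bounds of the form ``error $\lesssim \delta$'': Theorem~\ref{detscheme} gives $\sup_{t\le T'}d(\tilde\mu(t),\mu(t))\le C^d(\delta+\omega(\delta,0))\le C^d(1+C^\omega)\delta$ deterministically, and Theorem~\ref{finalh} gives $\mathbb P_n(\sup_{t\le T'}d(\tilde\mu(t),\mu^n(t))\ge C^p(1+C^\omega)\delta)\le \tfrac{C^p_2}{\delta^2}\exp(-C^p_3\delta^5 n)$ for $\delta<\delta^p$ and $n>n^p(\delta)$. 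By the triangle inequality, on the complement of the event in Theorem~\ref{finalh} we have
\begin{equation}
\sup_{t\le T'}d(\mu(t),\mu^n(t))\le \sup_{t\le T'}d(\mu(t),\tilde\mu(t))+\sup_{t\le T'}d(\tilde\mu(t),\mu^n(t))\le (C^d+C^p)(1+C^\omega)\delta.
\end{equation}

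Next I would choose $\delta$ as a function of the target precision $\varepsilon$. Setting $\delta = \delta(\varepsilon):=\varepsilon/\big((C^d+C^p)(1+C^\omega)\big)$ — equivalently $\delta = \varepsilon/(2C^pC^\omega)$ up to harmless constant adjustments, which is why the hypothesis restricts $\varepsilon\in(0,2C^pC^\omega\delta^p)$ so that $\delta(\varepsilon)<\delta^p$ — makes the right-hand side of the triangle bound equal to $\varepsilon$. Hence
\begin{equation}
\mathbb P_n\Big(\sup_{t\le T'}d(\mu(t),\mu^n(t))\ge \varepsilon\Big)\le \mathbb P_n\Big(\sup_{t\le T'}d(\tilde\mu(t),\mu^n(t))\ge C^p(1+C^\omega)\delta(\varepsilon)\Big)\le \frac{C^p_2}{\delta(\varepsilon)^2}\exp(-C^p_3\,\delta(\varepsilon)^5 n),
\end{equation}
valid for $n>n^p(\delta(\varepsilon))$. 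Substituting $\delta(\varepsilon)=c\varepsilon$ with $c=1/((C^d+C^p)(1+C^\omega))$ gives $\tfrac{C^p_2}{c^2\varepsilon^2}\exp(-C^p_3 c^5\varepsilon^5 n)$, which is exactly the claimed form with $C:=C^p_2/c^2$ and $\tilde C:=C^p_3 c^5$, and $N(\varepsilon):=n^p(\delta(\varepsilon))$.

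I would then just collect the bookkeeping: all constants $C^d,\delta^d,C^p,\delta^p,C^p_2,C^p_3,C^\omega$ depend only on the $L^\infty$/$L^1$ bounds of the solution from Theorem~\ref{solution} and the support bound $M$, so $C$ and $\tilde C$ inherit the same dependence; $T'<T(\bar f)$ is passed through unchanged since both input theorems hold for every such $T'$; and the restriction $\varepsilon<2C^pC^\omega\delta^p$ guarantees $\delta(\varepsilon)$ lies in the admissible range $(0,\delta^p)$ for Theorem~\ref{finalh}. There is essentially no obstacle here — the theorem is a corollary, and the only mild care needed is to verify the constant matching in the exponent (the fifth power of $\varepsilon$ comes directly from the $\delta^5$ in Theorem~\ref{finalh} together with the linear substitution $\delta\propto\varepsilon$) and to make sure the threshold $N(\varepsilon)=n^p(\delta(\varepsilon))$ is stated as depending on $\varepsilon$, which is consistent with the hypothesis ``for all integers $n>N(\varepsilon)$.'' The genuine mathematical content lives entirely in Theorems~\ref{detscheme} and \ref{finalh}; this final statement is the clean packaging of those two estimates.
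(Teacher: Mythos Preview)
Your proposal is correct and follows essentially the same route as the paper: both use the triangle inequality with the intermediate discretization $\tilde\mu$, invoke Theorems~\ref{detscheme} and \ref{finalh} with the Lipschitz bound $\omega(0,\delta)\le C^\omega\delta$, and then substitute $\delta\propto\varepsilon$ to convert the $\delta^5$ exponent into $\varepsilon^5$. Your constant bookkeeping (writing the combined constant as $(C^d+C^p)(1+C^\omega)$ and setting $c=1/((C^d+C^p)(1+C^\omega))$) is in fact slightly cleaner than the paper's, which absorbs the $1+C^\omega$ into $C^\omega$ and chooses $\delta=\varepsilon/(2\hat C^d)$ before applying Theorem~\ref{finalh} at level $\varepsilon/2$.
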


\begin{proof}
Since initial conditions are locally Lipschitz and compactly supported, we may replace the $C^d( \delta+ \omega(\delta,0))$ and $C^p( \delta+ \omega(\delta,0))$ terms in  Theorem \ref{detscheme} and with \ref{finalh} with $\hat C^d \delta$ and $\hat C^p \delta$, respectively,  where $\hat C^d =  C^dC^\omega$ and $\hat C^p = C^pC^\omega$.

Let $\varepsilon\in (0,2\hat C^p\delta_p)$  and choose $\delta = \varepsilon /(  2\hat C^d)$. We then have
\begin{align}
&\mathbb P\left(\sup_{t\le T'}d(\mu(t), \mu^n(t)) \ge \varepsilon\right) \le    \mathbb P\left(\sup_{t\le T'}d(\tilde
\mu(t),
\mu^n(t))+d(\mu(t),
\tilde \mu(t)) \ge \varepsilon\right)\\
& \le \mathbb P\left(\sup_{t\le T'}d(\tilde
\mu(t),
\mu^n(t))\ge \varepsilon-\hat C^d\delta\right) = 
\mathbb P\left(\sup_{t\le T'}d(\tilde
\mu(t),
\mu^n(t))\ge \varepsilon/2\right) \nonumber\\
&\le  \frac
{4(\hat C^p)^2C_{2}^p} {\varepsilon^2}\exp\left(-\frac{C_{3}^p}{32(\hat C^p)^5}\varepsilon^5n\right).\nonumber
\end{align}
The second inequality uses Theorem \ref{detscheme}, and the third uses Theorem \ref{finalh}. We obtain (\ref{mainconc}) with $C = 4(\hat C^p)^2C_{2}^p$ and $\tilde C =  C_{3}^p/(32(\hat C^p)^5)$.
\end{proof}
From Theorem \ref{majorthm},  an application of the Borel-Cantelli lemma then gives us a strong law of large numbers. 

\begin{cor}
Under the product measure $\mathbb Q = \prod_{n\ge 2} \mathbb P_n$, for $T' < T(\bar f)$,   \begin{equation}
  \lim_{n\rightarrow 0} \sup_{t \le T'}d( \mu^{n}(t), \mu(t)) = 0 \quad \hbox{ almost surely.}
\end{equation} \end{cor}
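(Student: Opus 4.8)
The plan is to derive the almost sure convergence directly from the quantitative bound in Theorem~\ref{majorthm} via a standard Borel--Cantelli argument. First I would fix a value of $\varepsilon$ small enough that Theorem~\ref{majorthm} applies, i.e. $\varepsilon \in (0, 2C^pC^{\omega}\delta^p)$, and set $A_n^{\varepsilon} = \{\sup_{t\le T'} d(\mu^n(t),\mu(t)) \ge \varepsilon\}$. Theorem~\ref{majorthm} gives $\mathbb P_n(A_n^{\varepsilon}) \le (C/\varepsilon^2)\exp(-\tilde C \varepsilon^5 n)$ for all $n > N(\varepsilon)$. Since the right-hand side is summable in $n$ (it decays geometrically), $\sum_{n\ge 2} \mathbb Q(A_n^{\varepsilon}) = \sum_{n\ge 2}\mathbb P_n(A_n^{\varepsilon}) < \infty$, where the first equality uses that under the product measure $\mathbb Q = \prod_{n\ge 2}\mathbb P_n$ the event $A_n^{\varepsilon}$ depends only on the $n$-th coordinate and so has $\mathbb Q$-probability equal to $\mathbb P_n(A_n^{\varepsilon})$. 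The Borel--Cantelli lemma then yields $\mathbb Q(A_n^{\varepsilon} \text{ i.o.}) = 0$, i.e. for $\mathbb Q$-a.e.\ sample point there is $n_0$ with $\sup_{t\le T'} d(\mu^n(t),\mu(t)) < \varepsilon$ for all $n \ge n_0$.

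Next I would upgrade this ``for each fixed $\varepsilon$'' statement to full almost sure convergence by intersecting over a countable sequence $\varepsilon_m \downarrow 0$ (say $\varepsilon_m = 1/m$ restricted to those $m$ with $1/m < 2C^pC^{\omega}\delta^p$, which is all large $m$). The set $\Omega_0 = \bigcap_m (A_n^{\varepsilon_m} \text{ i.o.})^c$ has $\mathbb Q$-probability $1$ as a countable intersection of full-measure sets, and on $\Omega_0$ we have $\limsup_{n} \sup_{t\le T'} d(\mu^n(t),\mu(t)) \le \varepsilon_m$ for every $m$, hence the limsup is $0$. Since $d \ge 0$, this gives $\lim_n \sup_{t\le T'} d(\mu^n(t),\mu(t)) = 0$ on $\Omega_0$, which is the claim (modulo the typo ``$n\to 0$'' in the statement, which should read ``$n\to\infty$'').

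There is essentially no serious obstacle here --- the only point requiring a word of care is the independence/product-measure bookkeeping: one must note that each $A_n^{\varepsilon}$ is a cylinder event in the $n$-th factor of $\Omega^{\mathbb N} = \prod_{n} \Omega_n$, so that $\mathbb Q(A_n^{\varepsilon}) = \mathbb P_n(A_n^{\varepsilon})$ and the first Borel--Cantelli lemma (which needs only summability of probabilities, not independence) applies verbatim. One should also confirm that $N(\varepsilon_m)$ being finite for each $m$ does not cause trouble: the tail sum $\sum_{n > N(\varepsilon_m)} \mathbb P_n(A_n^{\varepsilon_m})$ is what must be finite, and the finitely many omitted terms are harmless. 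I would write this up in a few lines.

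\begin{proof}
Fix any $\varepsilon$ with $0 < \varepsilon < 2C^pC^{\omega}\delta^p$, and set
\[
A_n^{\varepsilon} = \Big\{\sup_{t\le T'} d(\mu^n(t),\mu(t)) \ge \varepsilon\Big\}.
\]
Under $\mathbb Q = \prod_{n\ge 2}\mathbb P_n$, the event $A_n^{\varepsilon}$ is a cylinder in the $n$-th coordinate, so $\mathbb Q(A_n^{\varepsilon}) = \mathbb P_n(A_n^{\varepsilon})$. By Theorem~\ref{majorthm} there is $N(\varepsilon)$ such that for all $n > N(\varepsilon)$,
\[
\mathbb Q(A_n^{\varepsilon}) \le \frac{C}{\varepsilon^2}\exp(-\tilde C \varepsilon^5 n),
\]
and since the right-hand side is geometrically decaying in $n$, $\sum_{n\ge 2}\mathbb Q(A_n^{\varepsilon}) < \infty$. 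By the Borel--Cantelli lemma, $\mathbb Q(A_n^{\varepsilon}\text{ i.o.}) = 0$.

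Now let $M_0$ be the least integer with $1/M_0 < 2C^pC^{\omega}\delta^p$, and put
\[
\Omega_0 = \bigcap_{m\ge M_0}\big(A_n^{1/m}\text{ i.o.}\big)^c,
\]
which satisfies $\mathbb Q(\Omega_0) = 1$ as a countable intersection of sets of full measure. On $\Omega_0$, for each $m\ge M_0$ there is $n_0(m)$ such that $\sup_{t\le T'} d(\mu^n(t),\mu(t)) < 1/m$ for all $n\ge n_0(m)$; hence $\limsup_{n\to\infty}\sup_{t\le T'} d(\mu^n(t),\mu(t)) \le 1/m$ for every $m\ge M_0$, and letting $m\to\infty$ gives $\limsup_{n\to\infty}\sup_{t\le T'} d(\mu^n(t),\mu(t)) = 0$. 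Since $d\ge 0$, this means $\lim_{n\to\infty}\sup_{t\le T'} d(\mu^n(t),\mu(t)) = 0$ on $\Omega_0$, i.e.\ $\mathbb Q$-almost surely.
\end{proof}
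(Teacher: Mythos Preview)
Your proof is correct and follows exactly the approach the paper indicates: the paper does not give a written-out proof of the corollary but simply states that ``an application of the Borel--Cantelli lemma'' to Theorem~\ref{majorthm} yields the result, which is precisely the argument you have supplied in full detail.
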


\section{Comparison of kinetic equations and deterministic scheme}\label{disc}

In this section, we present a proof of Theorem \ref{detscheme}. The discretization
scheme outlined in Section \ref{discdetoutline} allows
us to write recursive formulas at time steps $t_k$ related to measures restricted
to size $\delta$ intervals, which we denote as  
\begin{equation}
I_l = [(l-1)\delta, l\delta), \quad l \ge 1.
\end{equation}In Section \ref{regdisc} we collect estimates related to growth
of quantities for solutions of the kinetic equations and the discretization
scheme, including the modulus of continuity $\omega(\delta, t)$ and total
number contained in an interval.  Estimates related to the comparison between
solutions of the kinetic equations and the discretization are presented 
in Section \ref{convest}.  The main quantity of interest is  the difference
of total number in intervals $I_l$ at times $t_k$.  Through constructing
a closed recurrence inequality,  we show differences are of order $\delta^2+\delta
\omega(\delta, 0)$. In Section \ref{poofdettheo}, these differences are then
summed over $[0,M]$
to establish Theorem \ref{detscheme}.

\subsection{Growth estimates}\label{regdisc}



Our estimates for solutions of (\ref{kinint1})-(\ref{kinint2}) and iterations
(\ref{updatediscneg1})-(\ref{updatedisc0}) will make
frequent use of the constant bounds\begin{align}
C_{\infty}( \bar f) &= \max_{j = 1,2}\sup_{s \in [0,T_1(\bar f)]} \|f_j(x,s)\|_{\infty}, \label{cinf}\\
C_b( \bar f)&=\max \{1/N_2(T_1(\bar f)), 1/\tilde N_2(T_1(\bar f))\},  \label{cb}
\end{align}
which are dependent upon the initial
conditions $\bar f = (\bar f_1, \bar f_2)$ and time of existence $T(\bar
f)$.  That $C_{\infty}( \bar f), C_b(\bar
f)$ are finite follows from well-posedness of $(f_1(x,t), f_2(x,t))$ in Theorem
\ref{solution} and the existence of $T_1(\bar f)$ established from (\ref{n2tildefinite}).
For simplicity, in future estimates we will refer to these constants simply
as $C_\infty$
and $C_b$. As we will see in Lemma \ref{mbounds}, it  will be necessary to
further restrict our interval of existence  to   $t \in [0, T_2(\bar f)])$,
with
\begin{equation}
T_2(\bar f) = T_1(\bar f) \wedge 1/(8C_\infty C_b).
\end{equation}
We will work with solutions $(f_1(x,t), f_2(x,t))$ of (\ref{kinint1})-(\ref{kinint2})
having initial conditions $\bar f_1,\bar
f_2 \in Z$ with compact support in some interval $[0,M] \subset \mathbb R_+$.
For the deterministic scheme, measures
 have
identical initial conditions as the kinetic equations, with  $\bar \mu_j(dx)
= \bar f_j(x)dx$. 

We begin  with a simple estimate on the propagation of the  modulus of continuity.

\begin{lemma}\label{moclem} For all $\delta>0$ and $t \le T_2(\bar f)$, 
\begin{align}
\omega(\delta, t) \le C_1\omega(\delta, 0), \label{moc1}
\end{align}
where $C_1 = 2\exp(2C_b).$
\end{lemma}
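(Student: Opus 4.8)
The plan is to control the increments $g_j(x,t) := f_j(x+\delta,t) - f_j(x,t)$, $j = 1,2$, directly from the Duhamel representations (\ref{kinint1})--(\ref{kinint2}) via a Gr\"onwall estimate, using that $\omega(\delta,t) = \sup_x\big(|g_1(x,t)| + |g_2(x,t)|\big)$. Write $a(s) = f_1(0,s)$ for the removal rate. First I would subtract (\ref{kinint2}) at $x+\delta$ and at $x$ to obtain, for each fixed $x$, the scalar linear integral equation
\[
g_2(x,t) = \big(\bar f_2(x+\delta) - \bar f_2(x)\big) - \int_0^t g_2(x,s)\,\frac{a(s)}{N_2(s)}\,ds .
\]
Since $|\bar f_2(x+\delta) - \bar f_2(x)| \le \omega(\delta,0)$ by (\ref{moc}), Gr\"onwall's inequality in $t$ gives $|g_2(x,t)| \le \omega(\delta,0)\exp\!\big(\int_0^t a(s)/N_2(s)\,ds\big)$. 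I would bound the exponent on $[0,T_2(\bar f)]\subseteq[0,T_1(\bar f)]$ using $N_2(s) \ge N_2(T_1(\bar f)) \ge 1/C_b$ together with $\int_0^t a(s)\,ds = L(t) = N_2(0) - N_2(t) \le N_2(0) \le 1$ from (\ref{nform}); hence $\int_0^t a(s)/N_2(s)\,ds \le C_b$ and $\sup_x |g_2(x,t)| \le e^{C_b}\,\omega(\delta,0)$.

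Next, subtracting (\ref{kinint1}) at $x+\delta$ and at $x$ gives
\[
g_1(x,t) = \big(\bar f_1(x+\delta+t) - \bar f_1(x+t)\big) + \int_0^t g_2(x+t-s,\,s)\,\frac{a(s)}{N_2(s)}\,ds ,
\]
and here no further Gr\"onwall is needed: the boundary term is at most $\omega(\delta,0)$, and the integral is controlled by the bound on $g_2$ together with $\int_0^t a(s)/N_2(s)\,ds \le C_b$, so $\sup_x|g_1(x,t)| \le \omega(\delta,0)\,(1 + C_b e^{C_b})$. Adding the two bounds gives $\omega(\delta,t) \le \omega(\delta,0)\,(1 + e^{C_b} + C_b e^{C_b})$, and since $C_b \ge 1/N_2(T_1(\bar f)) \ge 1$ (as $N_2(T_1(\bar f)) \le N_2(0) \le 1$) one checks elementarily that $1 + e^{C_b} + C_b e^{C_b} \le 2e^{2C_b} = C_1$.

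I expect the only delicate points to be bookkeeping rather than a genuine obstacle: that the Gr\"onwall step is applied pointwise in $x$ (legitimate since $s\mapsto g_2(x,s)$ is continuous, $f_2$ being continuous by Theorem \ref{solution}), that the integrals $\int_0^t a(s)/N_2(s)\,ds$ are finite on the interval of existence (which follows from the finiteness of $C_\infty$ and $C_b$ noted after (\ref{cinf})--(\ref{cb})), and that the crude comparison $1 + (1+C_b)e^{C_b} \le 2e^{2C_b}$ holds for every $C_b \ge 1$. Since only a bound linear in $\omega(\delta,0)$ is needed downstream, I would not try to sharpen the constant past the stated $2\exp(2C_b)$.
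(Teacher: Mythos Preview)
Your proof is correct. Both your argument and the paper's rest on the same ingredients---the Duhamel forms (\ref{kinint1})--(\ref{kinint2}), the bound $1/N_2(s)\le C_b$, the fact $L(t)\le 1$, and Gr\"onwall---but you organise them differently. The paper bounds the sum $|g_1|+|g_2|$ in one stroke, takes the supremum over $x$, and applies a single Gr\"onwall step to $\omega(\delta,\cdot)$, obtaining $\omega(\delta,t)\le 2\omega(\delta,0)+2C_b\int_0^t\omega(\delta,s)\,dL(s)$ and hence $\omega(\delta,t)\le 2e^{2C_b L(t)}\omega(\delta,0)$. You instead exploit that the $g_2$ equation is closed in $g_2$, Gr\"onwall it first pointwise in $x$, and then feed the resulting bound into the $g_1$ equation without a second Gr\"onwall. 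Your route gives the intermediate constant $1+(1+C_b)e^{C_b}$, which is actually sharper than $2e^{2C_b}$ before you discard it (and the comparison $1+(1+C_b)e^{C_b}\le 2e^{2C_b}$ in fact holds for all $C_b\ge 0$ via $1+C_b\le e^{C_b}$, so the restriction $C_b\ge 1$ is not even needed). The paper's route is marginally shorter because it avoids that final elementary comparison. Either way the constant is immaterial downstream.
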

\begin{proof}
Since $N_2(t)$ is decreasing,  we use  (\ref{kinint1})-(\ref{kinint2}) to
show
\begin{align}
&\sum_{j = 1}^2|f_j(x+\delta,t)-f_j(x,t)|\label{mocinq}\\&\le|f_1(x+\delta+t,0)-f_1(x+t,0)|+|f_2(x+\delta,0)-f_2(x,0)|\nonumber\\
&+C_b \int_0^t|f_2(x+\delta+t-s,s)-f_2(x+t-s,s)|+|f_2(x+\delta,s)-f_2(x,s)|dL(s),\nonumber
\end{align}
where $L(s) = \int_0^t f_1(0,s)ds$ is the total loss. By taking the supremum
of the above inequality  over $x\in \mathbb R_+$,
from the definition of $\omega(\delta, t)$ given in (\ref{moc}),\begin{align}
\omega(\delta,t) \le 2\omega(\delta,0)+ 2C_b\int_0^t\omega(\delta,s)
dL(s). 
\end{align}
From Gronwall's inequality, 
\begin{equation}
\omega(\delta,t) \le2\exp(2C_bL(t))\omega(\delta,0). 
\end{equation}
Since $L(t) \le 1$, we obtain (\ref{moc1}).
\end{proof}

Next, we turn to studying the maximum total number  of a measure on
length $\delta$ intervals, denoted  as  \begin{equation}
m^j_\delta(t) = \sup_{\substack{I:|I|=  \delta}} \mu_j(t,I),
\quad j= 1,2, \qquad
 m_\delta(t) =\sum_{j
= 1}^2 m^j_\delta(t).  \label{mdef}
\end{equation}
 We define $\tilde m_\delta(t)$ similarly.

\begin{lemma}\label{mbounds}For $\delta>0$ and $t<T_2(\bar f)$,
\begin{align}
 m_\delta(t)&\le C_2\delta ,\label{mlem1}\\
\tilde m_\delta(t_k)&\le \tilde C_2 \delta, \label{mlem2}\\
\tilde N(t_k)&\ge 1-\tilde C_2  \delta k . \label{mlem3}
\end{align}
With constants  
\begin{equation}
C_2 =2C_\infty \exp(C_b),   \quad  \tilde C_2 = \frac{8C_\infty^2}{1- 4C_bC_\infty
T_2(\bar f)}.
\end{equation}
\end{lemma}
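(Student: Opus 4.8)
The plan is to establish the three bounds by induction on the time step $k$, treating the continuous solution bound \qref{mlem1} first (it is non-recursive given Theorem \ref{solution}) and then the two discrete bounds \qref{mlem2} and \qref{mlem3} simultaneously, since they feed into one another. For \qref{mlem1}, I would start from the Duhamel formulas \qref{kinint1}-\qref{kinint2}. For Species 2, $f_2(x,t) \le \bar f_2(x) \le C_\infty$ pointwise (it only decreases), so $\mu_2(t,I) \le C_\infty \delta$ for any interval $I$ of length $\delta$. For Species 1, integrating \qref{kinint1} over an interval $I$ of length $\delta$ gives $\mu_1(t,I) \le \int_I \bar f_1(x+t)\,dx + \int_0^t \left(\int_I f_2(x+t-s,s)\,dx\right)\frac{f_1(0,s)}{N_2(s)}\,ds$. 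The first term is at most $C_\infty\delta$; in the second, the inner integral is at most $C_\infty\delta$ and $f_1(0,s)/N_2(s) \le C_\infty C_b$, while $\int_0^t f_1(0,s)\,ds = L(t) \le 1$, so the whole term is at most $C_\infty\delta \cdot C_\infty C_b$. Actually I expect a cleaner route: use that $f_1(0,s)/N_2(s)\,ds = dL(s)$ and Gronwall-type bookkeeping, or simply bound $m^1_\delta(t) \le C_\infty\delta(1 + C_b)$ and absorb into $C_2 = 2C_\infty\exp(C_b)$ (the exponential form suggests the author in fact wants a Gronwall argument tracking $m^1_\delta(t) \le C_\infty\delta + C_b\int_0^t m^2_\delta(s)\,dL(s) + \cdots$, but since $m^2_\delta \le C_\infty\delta$ is already uniform, one convolution suffices and the exponential is just slack).

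For the discrete estimates, I would induct on $k$. The base case $k \le 1$ is immediate: $\tilde\mu_j(t,\cdot) = \bar\mu_j$ on $[0,\delta)$, so $\tilde m_\delta \le 2C_\infty\delta \le \tilde C_2\delta$ and $\tilde N = 1 \ge 1 - \tilde C_2\delta k$. For the inductive step, suppose the bounds hold through step $k-1$. From the update \qref{updatedisc0}, $\tilde\mu_2(t_k) = \tilde\mu_2(t_{k-1})(1 - \Delta\tilde L(t_k)/\tilde N_2(t_{k-1}))$, so $\tilde m^2_\delta(t_k) \le \tilde m^2_\delta(t_{k-1}) \le \bar\mu_2$-sup $\le C_\infty\delta$ (Species 2 mass on any interval only shrinks). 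For Species 1, \qref{updatediscneg1} shows $\tilde\mu_1(t_k)$ restricted to an interval $I$ is the sum of a shifted piece of $\tilde\mu_1(t_{k-1})$ (hence bounded by $\tilde m^1_\delta(t_{k-1})$) plus $\frac{\Delta\tilde L(t_k)}{\tilde N_2(t_{k-1})}\tilde\mu_2(t_{k-1},I)$. Using $\Delta\tilde L(t_k) = \tilde\mu_1(t_{k-1},[0,\delta)) \le \tilde m^1_\delta(t_{k-1}) \le \tilde C_2\delta$, $\tilde\mu_2(t_{k-1},I) \le C_\infty\delta$, and $1/\tilde N_2(t_{k-1}) \le 2/\tilde N_2(0) \le 2C_b$ (valid for $t_{k-1} < T_2(\bar f) \le T_1(\bar f)$), the added mass is at most $2C_b \tilde C_2 C_\infty \delta^2$. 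So $\tilde m^1_\delta(t_k) \le \tilde m^1_\delta(t_{k-1}) + 2C_b C_\infty \tilde C_2 \delta^2$, and iterating from step $1$ over at most $T_2(\bar f)/\delta$ steps gives $\tilde m^1_\delta(t_k) \le C_\infty\delta + 2C_bC_\infty\tilde C_2 \delta^2 \cdot (T_2(\bar f)/\delta) = C_\infty\delta + 2C_bC_\infty T_2(\bar f)\tilde C_2\delta$. Adding the Species 2 part, $\tilde m_\delta(t_k) \le 2C_\infty\delta + 2C_bC_\infty T_2(\bar f)\tilde C_2\delta$. For this to close as $\tilde m_\delta(t_k) \le \tilde C_2\delta$, I need $2C_\infty + 2C_bC_\infty T_2(\bar f)\tilde C_2 \le \tilde C_2$, i.e. $\tilde C_2 \ge 2C_\infty/(1 - 2C_bC_\infty T_2(\bar f))$ — which matches the stated $\tilde C_2 = 8C_\infty^2/(1-4C_bC_\infty T_2(\bar f))$ up to the constant slack that the definition $T_2(\bar f) \le 1/(8C_\infty C_b)$ is designed to guarantee (it makes the denominator bounded below by $1/2$, so $\tilde C_2$ is finite and the self-consistency holds; the extra $C_\infty$ factor and the $4$ versus $2$ give comfortable room in the crude steps, e.g. if $\Delta\tilde L/\tilde N_2$ is bounded using $C_b$ rather than $2C_b$ elsewhere, or if I was sloppy by a factor of $2$ in the inner-interval bound).

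Finally \qref{mlem3}: from the conservation identity $\tilde N(t_k) = 1 - \sum_{i=1}^{k-1}\Delta\tilde L(t_i)$ stated in the construction, and $\Delta\tilde L(t_i) = \tilde\mu_1(t_{i-1},[0,\delta)) \le \tilde m_\delta(t_{i-1}) \le \tilde C_2\delta$ by \qref{mlem2}, the sum of $k-1 \le k$ terms is at most $\tilde C_2\delta k$, giving $\tilde N(t_k) \ge 1 - \tilde C_2\delta k$. The main obstacle, and the only genuinely delicate point, is the self-referential nature of the Species 1 discrete bound: $\Delta\tilde L$ (the mutation mass injected into Species 2) depends on $\tilde m^1_\delta$ at the previous step, so one must run the induction carefully and check that the accumulated error over $T_2(\bar f)/\delta$ steps stays linear in $\delta$ rather than blowing up — this is exactly why the interval of existence is truncated at $T_2(\bar f) = T_1(\bar f)\wedge 1/(8C_\infty C_b)$, ensuring the geometric-type feedback has ratio bounded below $1$. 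Everything else is bookkeeping with the translation operator $S_\delta$ (which cannot increase the mass on any fixed-length window) and the uniform bounds $C_\infty$, $C_b$ from Theorem \ref{solution}.
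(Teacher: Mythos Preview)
Your proposal is correct. For \qref{mlem1} and \qref{mlem3} you proceed essentially as the paper does (integrate the Duhamel formulas over an interval, bound via $C_\infty$, $C_b$, and Gronwall for the continuous case; sum $\Delta\tilde L(t_i)\le \tilde m_\delta(t_{i-1})$ for the discrete total number). For \qref{mlem2} your route differs. The paper does not separate the two species: it works on the grid-interval sup $\hat m_\delta(t_k)=\sup_{l}\tilde\mu_1(t_k,I_l)+\sup_{l}\tilde\mu_2(t_k,I_l)$, obtains the quadratic recursion $\hat m_\delta(t_k)\le \hat m_\delta(t_{k-1})+C_b(\hat m_\delta(t_{k-1}))^2$ by bounding both $\Delta\tilde L(t_k)$ and $\tilde\mu_2(t_{k-1},I_l)$ by $\hat m_\delta(t_{k-1})$, and then compares this with the explicit solution of the Euler scheme for $g'=C_b g^2$, whose finite-time blow-up is what forces the truncation to $T_2(\bar f)$. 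You instead exploit the monotone decrease of $\tilde\mu_2$ to fix $\tilde m^2_\delta\le C_\infty\delta$ uniformly, which linearizes the Species~1 recursion into an additive one with increment $\mathcal O(\tilde C_2\delta^2)$; summing over $O(T_2/\delta)$ steps and closing self-consistently on $\tilde C_2$ then gives the bound. Your argument is cleaner and uses more of the specific structure (Species~2 only loses mass), while the paper's quadratic-ODE comparison is more symmetric and would generalize more readily to situations where no species is monotone. The mild mismatch you flag between your constant $2C_\infty/(1-2C_bC_\infty T_2)$ and the stated $\tilde C_2$ is, as you say, just slack in the paper's bookkeeping (a factor of $2$ from the grid-versus-arbitrary interval passage $\tilde m_\delta\le 2\hat m_\delta$, and a generous $A(0)\le 4C_\infty$).
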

\begin{proof}
To show (\ref{mlem1}), for an interval $I$ with $|I| \le \delta$, integrate
 (\ref{kinint1})-(\ref{kinint2})
over $I$ to obtain
\begin{align}
\mu_1(t,I)&= \mu_1(0,I+t) +\int_{0}^{t} \frac{\mu_2(s,I+t-s)}{N_2(s)}
dL(s),\\
\mu_2(t,I)&= \mu_2(0,I)-\int_{0}^{t} \frac{\mu_2(s,I)}{N_2(s)}
dL(s)
\end{align}
By taking  the supremum over all length $\delta$  intervals, we find\begin{align}
m_\delta^1(t) &\le m_\delta^1(0)+C_b\int_0^t  m_\delta^2(s)
dL(s), \label{m1less}\\
m_\delta^2(t) &\le m_\delta^2(0).\label{m2less}
\end{align}
We then obtain (\ref{mlem1}) by summing  (\ref{m1less})-(\ref{m2less}), applying
Gronwall's lemma, and observing from initial conditions that  $m_\delta(0)
 \le 2C_\infty \delta$. 

To show (\ref{mlem2}), we will work with
\begin{equation}
\hat m^j_\delta(t) = \sup_{l\ge 1} \tilde \mu_j(t,I_l),
\quad j= 1,2, \qquad
 \hat m_\delta(t) =\sum_{j
= 1}^2 \hat m^j_\delta(t).  
\end{equation}
Note  that 
\begin{equation}
\tilde  m^j_\delta(t) \le 2\hat m^j_\delta(t). \label{hatvstilde}
\end{equation} 

From evaluating measures on $I_l$,  the recursion (\ref{updatediscneg1})-(\ref{updatedisc0})
implies that for $l \ge 1$ and $s \in [t_k, t_{k+1})$,    \begin{align}
\tilde\mu_1(s,I_l) &= \tilde\mu_1(t_{k-1},I_{l+1})+\frac{\Delta\tilde L(t_{k})}{\tilde
N_2(t_{k-1})}\tilde\mu_2(t_{k-1},I_{l}), \label{recmu1}\\
\tilde\mu_2(s,I_l) &= \tilde\mu_2(t_{k-1},I_l)\left(1-\frac{\Delta\tilde
L(t_{k})}{\tilde
N_2(t_{k-1})}\right). \label{recmu2}
\end{align}
We now
take the supremum over $j\ge 1$ in (\ref{recmu1})-(\ref{recmu2}) and sum
to obtain
\begin{equation}\label{miter}
\hat m_\delta(t_k)\le \hat m_\delta(t_{k-1})+C_b\left(\hat m_\delta(t_{k-1})\right)^2.
\end{equation}

For simplicity, we rescale by writing $\hat m_\delta(t_k) = A(t_k)\delta$.
From  (\ref{miter}), and noting
\begin{equation}
 \hat
 m_\delta(0) \le  2\tilde m_\delta(0) = 2m_\delta(0) \le 4C_\infty \delta,
 \end{equation}
 we obtain the recurrence inequalities
\begin{align}\label{eschemea}
 A(t_k)&\le A(t_{k-1})+\delta C_b(A(t_{k-1}))^2 , \\
 A(0) &\le 4C_\infty. \label{eschemeb}
\end{align}
In the case of equality,  (\ref{eschemea})-(\ref{eschemeb}) is an Euler scheme
for the differential equation $g'(t) =  C_b g^2(t)$ with $g(0) =  C_\infty.$
We may check directly that $A(t_k) \le g(t_k)$  before  blowup, or
\begin{equation}  
A(t_k) <  \frac{A(0)}{1-kA(0) C_b\delta}  \label{aineq}
\end{equation}
for  $t_k \le T_2(\bar f)$. We then use (\ref{aineq}), (\ref{eschemeb}),
 and (\ref{hatvstilde}) to obtain  (\ref{mlem2}).

Finally, (\ref{mlem3}) follows immediately, since 
\begin{equation}
\tilde N_2(t_k) =1-  \sum_{i= 1}^{k} \Delta\tilde  L(t_i)\ge 1-  \sum_{i=
1}^{k} \tilde m_\delta(t_i).
\end{equation}
\end{proof}

 We finish this subsection with one more estimate related to the  incremental
loss and total number in the kinetic limit.
\begin{lemma} \label{nllem} For $t_k < T_2(\bar f),
 $ \begin{align}
&\Delta L(t_k): = L(t_k)-L(t_{k-1}) \le C_2 \delta+C_bC_{\infty}^2 \delta
^2, \label{ldiff}\\
&N(t_k)
 \ge 1-C_2k\delta -C_bC_{\infty}^2k\delta^2.
\label{nbound} \end{align}
\end{lemma}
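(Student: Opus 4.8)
The plan is to prove the per-step bound (\ref{ldiff}) first, and then deduce (\ref{nbound}) from it by summation. The essential point for (\ref{ldiff}) is that $\Delta L(t_k)=\int_{t_{k-1}}^{t_k}f_1(0,s)\,ds$ is precisely the mass of Species 1 that exits at the origin during $[t_{k-1},t_k]$, which is \emph{not} simply $\mu_1(t_{k-1},[0,\delta))$: it also includes the mass \emph{created} near the origin by mutations during $(t_{k-1},t_k)$ that itself drifts out before time $t_k$, and this second contribution is what produces the $O(\delta^2)$ correction term.

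To make this precise using only the integral form, I would split the source term in (\ref{kinint1}) at time $t_{k-1}$, which yields for $t\ge t_{k-1}$ the ``restarted'' Duhamel identity
\[
f_1(x,t)=f_1\!\left(x+(t-t_{k-1}),\,t_{k-1}\right)+\int_{t_{k-1}}^{t}f_2\!\left(x+(t-s),s\right)\frac{f_1(0,s)}{N_2(s)}\,ds .
\]
Evaluating at $x=0$, integrating over $t\in[t_{k-1},t_k]$, and applying Fubini to the double integral gives the exact identity
\[
\Delta L(t_k)=\mu_1\!\left(t_{k-1},[0,\delta)\right)+\int_{t_{k-1}}^{t_k}\frac{a(s)}{N_2(s)}\left(\int_0^{t_k-s}f_2(v,s)\,dv\right)ds .
\]
(The same identity follows by differentiating $g(t):=\mu_1(t,[0,t_k-t))$ and integrating back from $t_{k-1}$ to $t_k$, which is more transparent but needs a little care as the densities need not be differentiable.)

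The first term is at most $m_\delta(t_{k-1})\le C_2\delta$ by (\ref{mlem1}) of Lemma \ref{mbounds}, valid since $t_{k-1}<T_2(\bar f)$. For the double integral, I would bound the integrand with the uniform estimates (\ref{cinf})--(\ref{cb}): $a(s)=f_1(0,s)\le C_\infty$, $1/N_2(s)\le C_b$, and $\int_0^{t_k-s}f_2(v,s)\,dv\le C_\infty(t_k-s)$, so the double integral is at most $C_bC_\infty^2\int_{t_{k-1}}^{t_k}(t_k-s)\,ds=\tfrac12 C_bC_\infty^2\delta^2\le C_bC_\infty^2\delta^2$. Adding the two bounds gives (\ref{ldiff}).

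For (\ref{nbound}), I would combine $N_2(t)=N_2(0)-L(t)$ from (\ref{nform}) of Theorem \ref{solution} with conservation of Species 1, $N_1(t)=N_1(0)$ — which follows by integrating (\ref{kinint1}) over $x\in\R_+$ and using the renewal relation $a=\bar f_1+N_2(0)^{-1}(\bar f_2*a)$ coming from (\ref{kinint1})--(\ref{kinint2}) at $x=0$, or simply from the explicit form (\ref{f1exp}). Since $N_1(0)+N_2(0)=1$ this gives $N(t_k)=1-L(t_k)$, and as $L(t_k)=\sum_{i=1}^{k}\Delta L(t_i)$, the bound (\ref{ldiff}) just established yields $L(t_k)\le C_2k\delta+C_bC_\infty^2k\delta^2$, i.e.\ (\ref{nbound}). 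The only genuine obstacle is pinning down the decomposition of $\Delta L(t_k)$ correctly — in particular recognizing that the naive guess $\mu_1(t_{k-1},[0,\delta))$ is off by the $O(\delta^2)$ flux of freshly mutated mass; once this is in hand everything reduces to the $L^\infty$/$L^1$ bounds already packaged in $C_\infty$, $C_b$ and Lemma \ref{mbounds}.
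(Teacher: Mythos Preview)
Your proposal is correct and follows essentially the same route as the paper. The paper writes the restarted Duhamel identity directly as
\[
f_1(0,t_{k-1}+s)=f_1(s,t_{k-1})+\int_0^s \frac{f_1(0,t_{k-1}+r)}{N_2(t_{k-1}+r)}\,f_2(s-r,t_{k-1}+r)\,dr,
\]
integrates over $s\in[0,\delta]$, and bounds the double integral crudely by $C_bC_\infty^2\delta^2$; your version is the same computation after a Fubini/change-of-variables that makes the inner integral $\int_0^{t_k-s}f_2(v,s)\,dv$ explicit (and incidentally picks up the sharper factor $\tfrac12$). For (\ref{nbound}) the paper simply invokes $N(t_k)=1-\sum_{i=1}^k\Delta L(t_i)$, which is exactly what you argue.
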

\begin{proof}
We use (\ref{kinint1}) on the removal rate  $f_1(0,t)$ to obtain
\begin{align}
\Delta L(t_k) &= \int_{0}^\delta f_1(0,t_{k-1}+s)ds\\
 &= \int_0^\delta \left(f_1(s,t_{k-1})+ \int_0^s \frac{f_1(0,t_{k-1}+r)}{N_2(t_{k-1}+r)}
f_2(s-r,t_{k-1}+r)dr\right)ds\nonumber \\
&\le \mu_1(t_{k-1},[0,\delta])+C_bC_{\infty}^2\delta^2 .\nonumber 
\end{align}
From (\ref{mlem1}), we obtain (\ref{ldiff}). Since $
N(t_k) = 1-\sum_{i = 1}^{k} \Delta L(t_i),$  (\ref{nbound}) also follows.
\end{proof}

\subsection{Convergence estimates} \label{convest}

We now use estimates from the previous subsection to establish asymptotics
for the differences of total number between the
solution of (\ref{kineq1})-(\ref{inits}) and its  discretization.
We begin with a simple  result which follows immediately from Lemma \ref{nllem}
comparing incremental losses and total numbers of species.
\begin{cor} \label{difflncor} For $t_k < T_2(\bar f),
$  \begin{align}
|\Delta L(t_k)-\Delta \tilde L(t_k)| &\le |\mu_1(t_{k-1},[0,\delta])-\tilde
\mu_1(t_{k-1},[0,\delta])|+C_bC_{\infty}^2\delta^2. \\
|N(t_k)- \tilde N(t_k)| &\le \sum_{i = 1}^{k}|\mu_1(t_{i},[0,\delta])-\tilde
\mu_1(t_{i},[0,\delta])|+T(\bar f)C_bC_{\infty}^2\delta. \label{diffncorr}
\end{align}
\end{cor}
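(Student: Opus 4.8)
The plan is to obtain both estimates directly from the incremental-loss decomposition established inside the proof of Lemma~\ref{nllem}, together with the definition~(\ref{diffl}) of $\Delta\tilde L$. Recall from that proof the exact identity
\[
\Delta L(t_k)=\mu_1(t_{k-1},[0,\delta])+R_k,\qquad R_k:=\int_0^\delta\!\!\int_0^s\frac{f_1(0,t_{k-1}+r)}{N_2(t_{k-1}+r)}\,f_2(s-r,t_{k-1}+r)\,dr\,ds .
\]
The remainder satisfies $0\le R_k\le C_bC_\infty^2\delta^2$: nonnegativity is immediate since the integrand is nonnegative, and the upper bound is exactly the one already used to derive~(\ref{ldiff}). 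Since $\Delta\tilde L(t_k)=\tilde\mu_1(t_{k-1},[0,\delta])$ by~(\ref{diffl}), subtracting and applying the triangle inequality gives $|\Delta L(t_k)-\Delta\tilde L(t_k)|\le|\mu_1(t_{k-1},[0,\delta])-\tilde\mu_1(t_{k-1},[0,\delta])|+R_k$, and the bound on $R_k$ yields the first claimed inequality.

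For the second inequality, I would use that in both the continuous and the discrete dynamics the total number is one minus the accumulated loss, $N(t_k)=1-\sum_{i=1}^k\Delta L(t_i)$ and $\tilde N(t_k)=1-\sum_{i=1}^k\Delta\tilde L(t_i)$ (with $\Delta L(t_0)=\Delta\tilde L(t_0)=0$). Subtracting, bounding the difference of the two sums by the sum of the per-step discrepancies, and inserting the first inequality gives
\[
|N(t_k)-\tilde N(t_k)|\le\sum_{i=1}^k|\mu_1(t_{i-1},[0,\delta])-\tilde\mu_1(t_{i-1},[0,\delta])|+k\,C_bC_\infty^2\delta^2 .
\]
Then I would shift the index: the $i=1$ summand involves $\mu_1(0,\cdot)-\tilde\mu_1(0,\cdot)=\bar\mu_1-\bar\mu_1=0$, so $\sum_{i=1}^k|\mu_1(t_{i-1},[0,\delta])-\tilde\mu_1(t_{i-1},[0,\delta])|=\sum_{i=1}^{k-1}|\mu_1(t_{i},[0,\delta])-\tilde\mu_1(t_{i},[0,\delta])|\le\sum_{i=1}^k|\mu_1(t_{i},[0,\delta])-\tilde\mu_1(t_{i},[0,\delta])|$; and since $t_k=k\delta<T_2(\bar f)\le T(\bar f)$ we have $k\,C_bC_\infty^2\delta^2=(k\delta)C_bC_\infty^2\delta\le T(\bar f)C_bC_\infty^2\delta$. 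This is precisely~(\ref{diffncorr}).

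There is no real obstacle here — the statement is labelled a corollary for good reason. The only points needing a little care are: (i) noticing that the quadratic remainder $R_k$ in the loss decomposition is \emph{nonnegative}, so that $|\Delta L-\Delta\tilde L|$ is controlled by the Species-1 interval-mass discrepancy plus an $O(\delta^2)$ term; and (ii) the bookkeeping in the second estimate — lining up the two cumulative-loss partial sums, using that the initial ($i=1$) term vanishes to reindex, and converting the accumulated $k\delta^2$ error into an $O(\delta)$ error via $k\delta\le T(\bar f)$.
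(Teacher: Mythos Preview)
Your proposal is correct and is exactly the argument the paper has in mind: the corollary is stated as following ``immediately from Lemma~\ref{nllem}'', and your elaboration---using the loss decomposition $\Delta L(t_k)=\mu_1(t_{k-1},[0,\delta])+R_k$ from that proof, the definition~(\ref{diffl}), and then summing and reindexing with $k\delta<T(\bar f)$---is the natural way to unpack that.
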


To compare behavior on an interval $I_j$, we will use a formula similar to
 (\ref{recmu1})-(\ref{recmu2}) for evolving
densities over a time step $\Delta
t_k = [t_{k-1}, t_k)$. It follows directly from (\ref{kinint1})-(\ref{kinint2})
that      
 \begin{align}
f_1(x,t_k)&=  f_1(x+\delta, t_{k-1})+\int_{\Delta t_k}\frac{ f_2(x+t_k-s,s)}{N_2(s)}f_1(0,s)ds,\label{maineqn1}\\
 f_2(x,t_k) &=f_2(x,t_{k-1})- \int_{\Delta t_k} \frac{f_2(x,s)}{N_2(s)} f_1(0,s)ds.\label{maineqn2}
\end{align}To arrive at  an  estimate for the difference of total number
in an
interval, we use (\ref{updatediscneg1})-(\ref{updatedisc0}) and (\ref{maineqn1})-(\ref{maineqn2})
to express the difference of total number of an interval
in Species 1 as
\begin{align}\label{fdiff}
&|\mu_1(t_k,I_l)-\tilde \mu_1(t_k,I_l)|\le |\mu_1(t_{k-1},I_{l+1})-\tilde
\mu_1(t_{k-1},I_{l+1})|\\ &+\left|\int_{I_l}\int_{\Delta t_{k}} \frac{f_2(x+t_{k}-s,s)}{N_2(s)}
dL(s)dx-\frac{\Delta \tilde L(t_{k})}{\tilde N_2(t_{k-1})} \tilde
\mu_2(t_{k-1},I_{l})\right|.\nonumber 
\end{align}
For Species 2, 
\begin{align}
|\mu_2(t_k,I_l)-\tilde \mu_2(t_k,I_l)|\le |\mu_2(t_{k-1},I_{l})-\tilde \mu_2(t_{k-1},I_{l})|\label{f2diff2}\\
+\left|\int_{I_l}\int_{\Delta t_{k}} \frac{f_2(x,s)}{N_2(s)}
dL(s)dx-\frac{\Delta \tilde L(t_{k})}{\tilde N_2(t_{k-1})} \tilde
\mu_2(t_{k-1},I_{,})\right|.\nonumber 
\end{align}
The following lemma allows us to estimate the integrals in (\ref{fdiff})
and (\ref{f2diff2}) by quantities at discretized times $t_k$.

\begin{lemma} \label{intfest}
For $ l\ge 1$ and $t_k < T_2(\bar f),$ 
 \begin{align}
\left|\int_{I_l}\int_{\Delta t_{k}} \frac{f_2(x+t_{k}-s,s)}{N_2(s)}
dL(s)dx-\frac{\Delta  L(t_{k})}{ N_2(t_{k-1})} 
\mu_2(t_{k-1},I_{l})\right|  
 &=  \mathcal O(\delta^3+\delta^2\omega(\delta,0)). \label{tri02} \\
 \left|\int_{I_l}\int_{\Delta t_{k}} \frac{f_2(x,s)}{N_2(s)}
dL(s)dx-\frac{\Delta  L(t_{k})}{ N_2(t_{k-1})} 
\mu_2(t_{k-1},I_{l})\right| &=\mathcal O(\delta^3). \label{tri03} 
\end{align}
\end{lemma}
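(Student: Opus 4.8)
The plan is to estimate each double integral by freezing the slowly-varying factors at the left endpoint $t_{k-1}$ of the time step and controlling the errors introduced. For \eqref{tri03}, write $\int_{\Delta t_k}\frac{f_2(x,s)}{N_2(s)}dL(s)$ and compare it to $\frac{f_2(x,t_{k-1})}{N_2(t_{k-1})}\Delta L(t_k)$. The difference involves $|f_2(x,s)-f_2(x,t_{k-1})|$ and $|1/N_2(s)-1/N_2(t_{k-1})|$ integrated against $dL$; since $f_2$ satisfies \eqref{kinint2}, $|f_2(x,s)-f_2(x,t_{k-1})|\le C_bC_\infty\,\Delta L(t_k)$, and similarly $|1/N_2(s)-1/N_2(t_{k-1})|\le C_b^2\,\Delta L(t_k)$ on $[0,T_2(\bar f)]$. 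Since $\Delta L(t_k)=\mathcal O(\delta)$ by \eqref{ldiff}, these contribute $\mathcal O(\delta^2)$ per unit of $dL$-mass, hence after integrating over $x\in I_l$ (length $\delta$) and over $\Delta t_k$ (total $dL$-mass $\mathcal O(\delta)$) we get $\mathcal O(\delta^3)$. The remaining term $\frac{\Delta L(t_k)}{N_2(t_{k-1})}\int_{I_l}f_2(x,t_{k-1})dx = \frac{\Delta L(t_k)}{N_2(t_{k-1})}\mu_2(t_{k-1},I_l)$ is exactly the quantity being subtracted, so \eqref{tri03} follows.

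For \eqref{tri02} the argument is the same except that the spatial argument of $f_2$ is shifted by $t_k-s\in[0,\delta]$ rather than held fixed. So in addition to the two error sources above, there is a term $|f_2(x+t_k-s,s)-f_2(x,s)|$, which is bounded by the modulus of continuity $\omega(t_k-s,s)\le\omega(\delta,s)\le C_1\omega(\delta,0)$ by Lemma \ref{moclem} (using monotonicity of $\omega(\cdot,s)$ in its first argument, or summing the pointwise bound over the $\lceil (t_k-s)/\delta\rceil=1$ sub-steps). Integrating this over $x\in I_l$ (length $\delta$) against $dL$ (mass $\mathcal O(\delta)$) gives $\mathcal O(\delta^2\omega(\delta,0))$, and the other two error sources again give $\mathcal O(\delta^3)$ as before; combining yields $\mathcal O(\delta^3+\delta^2\omega(\delta,0))$.

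I would organize this as: (i) record the two freezing estimates $|f_2(x,s)-f_2(x,t_{k-1})|\le C_bC_\infty\Delta L(t_k)$ and $|N_2(s)^{-1}-N_2(t_{k-1})^{-1}|=|N_2(s)-N_2(t_{k-1})|/(N_2(s)N_2(t_{k-1}))\le C_b^2\Delta L(t_k)$, both valid for $t_k<T_2(\bar f)$ since $N_2\ge N_2(T_1(\bar f))>0$ there; (ii) add and subtract the frozen integrand, bound the triangle-inequality pieces using these estimates plus the boundedness $f_2\le C_\infty$, $N_2^{-1}\le C_b$; (iii) for \eqref{tri02} only, insert the additional spatial-shift term and invoke Lemma \ref{moclem}; (iv) integrate over $I_l$ and $\Delta t_k$ and collect powers of $\delta$ using $\Delta L(t_k)=\mathcal O(\delta)$ from Lemma \ref{nllem}. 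The main obstacle — really the only delicate point — is keeping the constants uniform in $k$ and $l$: this is exactly why the argument is restricted to $t_k<T_2(\bar f)$, where the denominators $N_2(s)$ and $\tilde N_2(t_{k-1})$ are bounded below by an absolute constant, and why Lemmas \ref{moclem}, \ref{mbounds}, and \ref{nllem} were proved first. Everything else is a routine estimate with the bookkeeping of $\delta$-powers done as above.
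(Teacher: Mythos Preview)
Your proposal is correct and follows essentially the same approach as the paper: freeze the integrand at $t_{k-1}$, split the error via the triangle inequality into a numerator-in-time piece (controlled by \eqref{kinint2}), a denominator piece (controlled by $|N_2(s)-N_2(t_{k-1})|\le\Delta L(t_k)$), and, for \eqref{tri02}, a spatial-shift piece (controlled by Lemma~\ref{moclem}), then integrate and count powers of $\delta$ using Lemma~\ref{nllem}. The only cosmetic difference is the order of the triangle-inequality splits---the paper aligns time first and then space, while you align space first and then time---but the resulting bounds are identical.
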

\begin{proof}
We show (\ref{tri02}).  The proof for (\ref{tri03}) is similar. Denote the
left hand side of (\ref{tri02}) as $\mathcal I$.  Through the triangle inequality,
we have\begin{align}
&\mathcal I = \left|\int_{I_l}\int_{\Delta t_{k}} \frac{f_2(x+t_{k}-s,s)}{N_2(s)}
-\frac{f_2(x,t_{k-1})}{N_2(t_{k-1})}
 dL(s)dx\right| \label{tri0}\\
&\le\int_{I_l} \int_{\Delta t_{k}}\left| \frac{f_2(x+t_{k}-s,s)-f_2(x,t_{k-1})}{N_2(s)}
\right|dL(s)dx\nonumber\\
&+\int_{I_l} \int_{\Delta t_{k}}\left| f_2(x,t_{k-1})\left(\frac{1}{N_2(s)}-\frac{1}{N_2(t_{k-1})}
\right)\right|dL(s)dx\nonumber\\
&:= \mathcal I_1+\mathcal I_2.\nonumber
\end{align}

We first show that 
\begin{equation}
\mathcal I_1 \le C_b^2C_{\infty}^2C_2\delta^3+C_1C_2C_b\omega(\delta,0)\delta^2
+\mathcal O(\delta^4+\omega(\delta,0)\delta^3
).   \label{int1est}
\end{equation}

 This is done by another  use of the triangle inequality to align arguments
in time and space, 
\begin{align}
\mathcal I_1
\le C_b \Big(\int_{\Delta
t_{k}}\int_{I_l}|f_2(x+t_{k}-s,t_{k-1})-f_2(x,t_{k-1})|dxdL(s) \label{int11}\\
+\int_{I_l}\int_{\Delta t_{k}}|f_2(x+t_{k}-s,s)-f_2(x+t_{k}-s,t_{k-1})|dL(s)dx.\Big)
\nonumber
\end{align}
From (\ref{moc1}),(\ref{mlem1}) and (\ref{ldiff}), the first integral may
be bounded by
\begin{align}
\int_{\Delta
t_{k}}\int_{I_l}|f_2(x+t_{k}-s,t_{k-1})-f_2(x,t_{k-1})|dx dL(s)\label{flem1}\\ \le C_1C_2\omega(\delta,0)\delta^2+\mathcal
O(\omega(\delta,0)\delta^3).\nonumber 
\end{align}
For the last integrand for (\ref{int11}), we may use (\ref{kinint2}) to obtain
\begin{align}
&|f_2(x+t_{k}-s,s)- f_2(x+t_{k}-s,t_{k-1})|\label{flem2}\\&= \int_{t_{k-1}}^{s} \frac{f_2(x+t_{k}-s,r)}{N_2(r)}
f_1(0,r)dr\le C_bC_{\infty}^2\delta. \nonumber
\end{align}
Substituting (\ref{flem1}) and (\ref{flem2}) into (\ref{int11})
then gives (\ref{int1est}).

By a similar calculation we may use (\ref{ldiff}) to obtain \begin{align}
\mathcal I_2 &\le C_\infty C_b^2 \delta (\Delta L(t_k))^2  \le C_\infty C_b^2
C_2^2\delta^3+\mathcal O(\delta^4). 
\end{align}
Finally, (\ref{tri02}) then comes from collecting estimates for $\mathcal
I = \mathcal I_1+\mathcal I_2$. 
\end{proof}

We are now ready to compare total numbers over length
$\delta$ intervals by defining
\begin{equation}
h_\delta^j(t_k) =  \sup_{l \ge 1} |\mu_j(t_k,I_l)- \tilde  
\mu_j(t_k,I_l)|, \quad h_\delta(t_k) = \sum_{j = 1}^2 h_\delta^j(t_k) . \label{hdef}
\end{equation}
 Our next lemma gives closed recurrence inequalities for $h_\delta(t_k)$.

\begin{lemma} There exists a $C_3(\bar f)$ dependent on initial conditions such that
 for $t_k < T_2(\bar f)$, $h_\delta(t_k)$ satisfies the recurrence inequality
\begin{align}
h_\delta(t_k) \le h_\delta(t_{k-1})+ C_3\left(\delta^2 \sum_{i = 1}^{k-1}
h_\delta(t_i)
+\delta h(t_{k-1})+\omega(\delta,0)\delta^2+\delta^3 \right). \label{hrec}
\end{align}
\end{lemma}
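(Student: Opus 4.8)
The plan is to start from the pointwise identities (\ref{fdiff}) and (\ref{f2diff2}) for $|\mu_j(t_k,I_l)-\tilde\mu_j(t_k,I_l)|$, take the supremum over all intervals $I_l$, and bound every term on the right-hand side either by $h_\delta$ evaluated at earlier times or by a genuinely small error of size $\mathcal O(\delta^3+\delta^2\omega(\delta,0))$. The ``shift'' term $|\mu_1(t_{k-1},I_{l+1})-\tilde\mu_1(t_{k-1},I_{l+1})|$ (resp. $|\mu_2(t_{k-1},I_l)-\tilde\mu_2(t_{k-1},I_l)|$ for $j=2$) is already at most $h_\delta^j(t_{k-1})$ after the supremum, since reindexing $l\mapsto l+1$ only shrinks the index set. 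So the entire content lies in estimating the ``mutation'' term, namely the second absolute value in (\ref{fdiff}) and (\ref{f2diff2}).

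For Species 1, I would first insert the intermediate quantity $\frac{\Delta L(t_k)}{N_2(t_{k-1})}\mu_2(t_{k-1},I_l)$ inside that absolute value. The difference between the double integral over $I_l\times\Delta t_k$ and this intermediate term is exactly the object bounded by (\ref{tri02}) of Lemma \ref{intfest}, hence $\mathcal O(\delta^3+\delta^2\omega(\delta,0))$. The remaining piece $\left|\frac{\Delta L(t_k)}{N_2(t_{k-1})}\mu_2(t_{k-1},I_l)-\frac{\Delta\tilde L(t_k)}{\tilde N_2(t_{k-1})}\tilde\mu_2(t_{k-1},I_l)\right|$ I would decompose by the standard three-term telescoping into
\[
(\Delta L(t_k)-\Delta\tilde L(t_k))\frac{\mu_2(t_{k-1},I_l)}{N_2(t_{k-1})},\qquad \Delta\tilde L(t_k)\,\mu_2(t_{k-1},I_l)\Big(\frac{1}{N_2(t_{k-1})}-\frac{1}{\tilde N_2(t_{k-1})}\Big),
\]
\[
\frac{\Delta\tilde L(t_k)}{\tilde N_2(t_{k-1})}\big(\mu_2(t_{k-1},I_l)-\tilde\mu_2(t_{k-1},I_l)\big),
\]
and estimate these respectively as follows. (i) Corollary \ref{difflncor} gives $|\Delta L(t_k)-\Delta\tilde L(t_k)|\le h_\delta(t_{k-1})+C_bC_\infty^2\delta^2$, while $\mu_2(t_{k-1},I_l)/N_2(t_{k-1})\le C_bC_2\delta$ by Lemma \ref{mbounds}, so this term is $\mathcal O(\delta\,h_\delta(t_{k-1})+\delta^3)$. (ii) $\Delta\tilde L(t_k)\cdot\mu_2(t_{k-1},I_l)=\mathcal O(\delta^2)$ by Lemma \ref{mbounds}, and $\left|\frac{1}{N_2(t_{k-1})}-\frac{1}{\tilde N_2(t_{k-1})}\right|\le C_b^2|N_2(t_{k-1})-\tilde N_2(t_{k-1})|$, where $|N_2(t_{k-1})-\tilde N_2(t_{k-1})|=|N(t_{k-1})-\tilde N(t_{k-1})|$ because total Species-$1$ mass is conserved for both the kinetic solution and the scheme ($N_1\equiv\tilde N_1\equiv N_1(0)$), and (\ref{diffncorr}) bounds this last quantity by $\sum_{i=1}^{k-1}h_\delta(t_i)+T(\bar f)C_bC_\infty^2\delta$; hence this term is $\mathcal O(\delta^2\sum_{i=1}^{k-1}h_\delta(t_i)+\delta^3)$. (iii) $\Delta\tilde L(t_k)/\tilde N_2(t_{k-1})\le C_b\tilde C_2\delta$ by Lemma \ref{mbounds}, so this term is $\mathcal O(\delta\,h_\delta(t_{k-1}))$. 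Collecting these and taking the supremum over $l\ge1$ in (\ref{fdiff}) yields a recurrence of the form $h_\delta^1(t_k)\le h_\delta^1(t_{k-1})+C_3'\big(\delta^2\sum_{i=1}^{k-1}h_\delta(t_i)+\delta\,h_\delta(t_{k-1})+\omega(\delta,0)\delta^2+\delta^3\big)$.

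Species 2 is handled identically from (\ref{f2diff2}): the only difference is that the integral term is now controlled by (\ref{tri03}), which is $\mathcal O(\delta^3)$ with no modulus-of-continuity contribution (Species 2 does not translate), and the three telescoping pieces are estimated by exactly the same bounds. Summing the $j=1$ and $j=2$ recurrences and taking $C_3(\bar f)$ to be the sum of the implied constants gives (\ref{hrec}). The main bookkeeping obstacle is item (ii): it is the single place where a cumulative sum $\sum_{i=1}^{k-1}h_\delta(t_i)$ enters, coming from the accumulated discrepancy $|N_2(t_{k-1})-\tilde N_2(t_{k-1})|$, and one must check that the non-summable part of that discrepancy — the $\mathcal O(\delta)$ term $T(\bar f)C_bC_\infty^2\delta$ in (\ref{diffncorr}) — contributes only at order $\delta^3$ once multiplied by the $\mathcal O(\delta^2)$ prefactor $\Delta\tilde L(t_k)\,\mu_2(t_{k-1},I_l)$. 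This is exactly what keeps the recurrence closed, so that it can later be summed over the $\mathcal O(1/\delta)$ time steps up to $T_2(\bar f)$.
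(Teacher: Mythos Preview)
Your proposal is correct and follows essentially the same route as the paper: apply Lemma \ref{intfest} to replace the double integral by $\frac{\Delta L(t_k)}{N_2(t_{k-1})}\mu_2(t_{k-1},I_l)$, perform a three-term telescoping of the difference $\frac{\Delta L}{N_2}\mu_2-\frac{\Delta\tilde L}{\tilde N_2}\tilde\mu_2$, bound each piece via Lemmas \ref{mbounds}, \ref{nllem} and Corollary \ref{difflncor}, take suprema and sum. The only cosmetic difference is the order of the telescoping (you vary $\Delta L$ first while the paper varies $1/N_2$ first), and you make explicit the identity $|N_2(t_{k-1})-\tilde N_2(t_{k-1})|=|N(t_{k-1})-\tilde N(t_{k-1})|$ that the paper uses implicitly when invoking (\ref{diffncorr}).
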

 
\begin{proof} From Lemma \ref{intfest} and (\ref{fdiff}),
for  $l \ge 1,$
\begin{align}
&|\mu_1(t_k,I_l)-\tilde \mu_1(t_k,I_l)|\label{fl1diff}\\
&\le |\mu_1(t_{k-1},I_{l+1})-\tilde
\mu_1(t_{k-1},I_{l+1})|\nonumber\\&+\left|\frac{\Delta L(t_{k})}{N_2(t_{k-1})}\mu_2(t_{k-1},I_{l})-\frac{\Delta\tilde
 L(t_{k})}{\tilde N_2(t_{k-1})}\tilde \mu_2(t_{k-1},I_{l})\right|\nonumber+\mathcal
O(\delta^3+\delta^2\omega(\delta,0)).  \nonumber
\end{align}
Two applications of the triangle inequality yield\begin{flalign}
   &\left|\frac{\Delta L(t_{k})}{N_2(t_{k-1})}\mu_2(t_{k-1},I_{l})-\frac{\Delta\tilde
L(t_{k})}{\tilde N_2(t_{k-1})}\tilde\mu_2(t_{k-1},I_{l})\right| \label{tri1}\\
&\le \left| \left(\frac 1{N_2(t_{k-1})}-\frac 1{\tilde N_2(t_{k-1})}\right)\Delta
L(t_{k})\mu_2(t_{k-1},I_{l})\right|\nonumber\\
&+\left| \frac 1{\tilde N_2(t_{k-1})}\mu_2(t_{k-1},I_l)(\Delta L(t_{k})-\Delta
\tilde L(t_{k}))\right|\nonumber\\&+\left|\frac {\Delta \tilde L(t_{k})}{\tilde
N_2(t_{k-1})}(\mu_2(t_{k-1},I_{l})-\tilde
\mu_2(t_{k-1},I_{l}))\right |. \nonumber
\end{flalign}
We use Lemmas \ref{mbounds} and \ref{nllem}, (\ref{fl1diff}), and  (\ref{tri1})
to obtain 
\begin{align}
 &|\mu_1(t_k,I_l)-\tilde \mu_1(t_k,I_l)| \label{mu1ugly}\\
&\le |\mu_1(t_{k-1},I_{l+1})-\tilde
\mu_1(t_{k-1},I_{l+1})|\nonumber\\&+C_b^2C_2^2\delta ^2|N_2(t_{k-1})-\tilde
N_2(t_{k-1})|
+C_bC_2\delta|\Delta L(t_{k})-\Delta \tilde
L(t_{k})|\nonumber\\&+ C_bC_2\delta|\mu_2(t_{k-1},I_{l})-\tilde
\mu_2(t_{k-1},I_{l})|\nonumber+\mathcal O(\delta^3+\delta^2\omega(\delta,0)).
 \nonumber
\end{align}
Similar bounds hold for Species 2, with \begin{align}
 &|\mu_2(t_k,I_l)-\tilde \mu_2(t_k,I_l)| \label{mu2ugly}\\
&\le |\mu_2(t_{k-1},I_{l})-\tilde
\mu_2(t_{k-1},I_{l})|\nonumber\\&+C_b^2C_2^2\delta ^2|N_2(t_{k-1})-\tilde
N_2(t_{k-1})|
+C_bC_2\delta|\Delta L(t_{k})-\Delta \tilde
L(t_{k})|\nonumber\\&+ C_bC_2\delta|\mu_2(t_{k-1},I_{l})-\tilde
\mu_2(t_{k-1},I_{l})|\nonumber+\mathcal O(\delta^3+\delta^2\omega(\delta,0)).
 \nonumber
\end{align}
We complete the proof by taking the supremum over $l$ for (\ref{mu1ugly})
and  (\ref{mu2ugly}),  using  (\ref{diffncorr}), and then adding to
show
that for some  $C_3$ which depends on $(\bar f_1, \bar f_2)$,
 \begin{align}\label{hrecur}
&h_\delta(t_k) \le h_\delta(t_{k-1})+C_3\left(\delta^2 \sum_{i = 1}^{k-1}
h_\delta(t_i)
 +\delta h_\delta(t_{k-1})
+\omega(\delta,0)\delta^2+\delta^3\right). 
\end{align}
 \end{proof}
We now show that the recurrence inequality (\ref{hrec}) implies  asymptotics
 for $h_\delta(t_k)$.
\begin{lemma} \label{hasympt}
For $t_k < T_2(\bar f)$, 
\begin{equation}
 h_\delta(t_k)= \mathcal O(\delta^2+\delta
\omega(\delta,0)). \label{horder}
\end{equation}
\end{lemma}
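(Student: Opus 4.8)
The plan is to solve the recurrence inequality (\ref{hrec}) by a discrete Gr\"onwall argument. Write $H_k = h_\delta(t_k)$ and set $R = \omega(\delta,0)\delta^2 + \delta^3$, so that (\ref{hrec}) reads
\[
H_k \le H_{k-1} + C_3\Bigl(\delta^2 \sum_{i=1}^{k-1} H_i + \delta H_{k-1} + R\Bigr).
\]
Since $t_k < T_2(\bar f)$ we have $k\delta \le T_2(\bar f)$, so $k \le T_2(\bar f)/\delta$ is bounded. The key observation is that the ``memory'' term $\delta^2 \sum_{i=1}^{k-1} H_i$ contributes, over at most $T_2(\bar f)/\delta$ steps, a total that is $O(\delta \cdot \max_i H_i)$, i.e. it is of the same order as the local term $\delta H_{k-1}$ and can be absorbed. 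Concretely, I would first prove by induction on $k$ that $H_k \le M_k := \max_{0\le i\le k} H_i$ satisfies $H_k \le (1 + C_4\delta) H_{k-1} + C_4\delta^{-1}\cdot R'$ for suitable $C_4$ and $R' = R$; more carefully, summing the $\sum_{i=1}^{k-1}H_i$ term against $\delta^2$ and using $H_i \le M_{k-1}$ gives $\delta^2 \sum_{i=1}^{k-1} H_i \le \delta^2 (k-1) M_{k-1} \le \delta T_2(\bar f) M_{k-1}$, so the recurrence becomes
\[
H_k \le H_{k-1} + C_3\bigl(\delta T_2(\bar f)\, M_{k-1} + \delta H_{k-1} + R\bigr) \le (1 + C_5\delta)M_{k-1} + C_3 R,
\]
where $C_5 = C_3(1 + T_2(\bar f))$. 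Taking the max over the previous step shows $M_k \le (1+C_5\delta) M_{k-1} + C_3 R$.

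From this scalar recurrence for $M_k$, with $M_0 = H_0$, iteration gives
\[
M_k \le (1+C_5\delta)^k M_0 + C_3 R \sum_{i=0}^{k-1}(1+C_5\delta)^i \le e^{C_5 \delta k} M_0 + \frac{C_3 R}{C_5 \delta}\, e^{C_5 \delta k}.
\]
Since $\delta k \le T_2(\bar f)$, the factor $e^{C_5\delta k} \le e^{C_5 T_2(\bar f)}$ is a constant depending only on $\bar f$. The initial term $M_0 = H_0 = h_\delta(0) = 0$ because both the kinetic solution and the discretization have the \emph{same} initial measures $\bar\mu_j$ on $[0,\delta)$ (by (\ref{discdetinit}) and the hypothesis of Theorem \ref{detscheme}), so $\mu_j(0,I_l) = \tilde\mu_j(0,I_l)$ for all $l$; even if one only knows $h_\delta(0) = O(\delta^2 + \delta\omega(\delta,0))$ this would suffice. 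The remaining term is $\frac{C_3 R}{C_5 \delta} e^{C_5 T_2(\bar f)} = O(R/\delta) = O\bigl((\omega(\delta,0)\delta^2 + \delta^3)/\delta\bigr) = O(\delta^2 + \delta\omega(\delta,0))$, which is exactly (\ref{horder}).

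The main technical point — really the only subtlety — is handling the non-Markovian term $\delta^2\sum_{i=1}^{k-1} H_i$ correctly: one must exploit that there are only $O(1/\delta)$ time steps before $T_2(\bar f)$ so that this cumulative term carries an extra factor of $\delta$ and reduces to the local term, rather than blowing up the Gr\"onwall constant. Everything else is the standard ``discrete Gr\"onwall $\Rightarrow$ exponential bound with bounded exponent'' mechanism. I would present it by first stating the reduction to the scalar recurrence for $M_k$, then iterating, then plugging in $\delta k \le T_2(\bar f)$ and $H_0 = 0$, and finally reading off the order of the forcing term. No new estimates beyond Lemmas \ref{moclem}--\ref{nllem} and the recurrence (\ref{hrec}) are needed.
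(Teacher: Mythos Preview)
Your argument is correct, but it proceeds differently from the paper's proof. The paper rescales by introducing $b_\delta(t_k)$ satisfying the recurrence with \emph{equality} and forcing term $\delta$, shows by induction that $h_\delta(t_k)\le(\delta^2+\delta\omega(\delta,0))\,b_\delta(t_k)$, and then argues $b_\delta(t_k)=\mathcal O(1)$ by observing that the recurrence is an Euler-type discretization of the second-order linear ODE $\tilde b''=C_3\tilde b'+C_3\tilde b$, whose solution is locally bounded on $[0,T_2(\bar f))$. You instead absorb the memory term $\delta^2\sum_{i<k}H_i$ into a running-maximum bound $M_{k-1}$ using $k\delta\le T_2(\bar f)$, which collapses the two-level recurrence to a first-order discrete Gr\"onwall inequality for $M_k$ that you iterate directly. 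Your route is more elementary and fully self-contained: it avoids any appeal to convergence of the discrete scheme to a continuous equation, which in the paper's presentation is stated but not quantified. The paper's route, on the other hand, makes the underlying second-order structure of the recurrence visible and would give sharper constants if one tracked them. Both rely on the same inputs (the recurrence (\ref{hrec}) and $h_\delta(0)=0$) and both immediately generalize to the case $h_\delta(0)=\mathcal O(\delta^2+\delta\omega(\delta,0))$ needed later for extending the time interval.
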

\begin{proof}

Let $b_\delta(t_k)$ satisfy the recurrence
equation
\begin{align}
b_\delta(t_k) = b_\delta(t_{k-1})+ C_3\left(\delta^2 \sum_{i = 1}^{k-1} b_\delta(t_i)
+\delta b_\delta(t_{k-1})+\delta \right),\label{brec}
\end{align}
with initial condition $b_\delta(0) = h_\delta(0) = 0.$ It follows immediately from induction
that 
 $h(t_k) \le (\delta^2+ \omega(\delta,0)\delta)) b_\delta(t_k)$ for all $k\ge
0$. Thus, it is sufficient to show that $b_\delta(t_k)
= \mathcal O(1)$ for
all $t_k < T_2(\bar f)$.  To see that this holds, note that as $\delta \rightarrow
0$, (\ref{brec})  converges  to  the linear  integro-differential
equation 
\begin{align}
\tilde b'(t) = C_3\left(\tilde b(t)+\int_0^t\tilde
b(s)ds+1 \right),\quad  t \in [0, T_2(\bar f)), \label{bdiffy}
\end{align}
with initial condition $\tilde b(0) = h_{\delta}(0)$. This can be solved  through elementary
second order methods to obtain
the locally bounded solution\begin{equation}
\tilde b(t) = A_1e^{r_1t}+A_2e^{r_2t},
\end{equation} 
where $A_1= -A_2 =  C_3/(r_2-r_1),$
and $r_1,r_2$ are the two real solutions of $r^2-C_3r-C_3 = 0$. 
\end{proof}

We remark that this theorem also holds when $h_\delta(0) =\mathcal O(\delta^2+\delta
\omega(\delta,0))$.  This is important for extending the time interval of existence. 
\subsection{Proof of Theorem \ref{detscheme} } \label{poofdettheo}

With bounds on differences of total numbers restricted to an interval, we are  finally
ready to show Theorem \ref{detscheme}.  We will require two more lemmas,
which are straightforward to show. First, we  give a formula computing KS
distances when only given information about cumulative functions on grid
points and  bounds on growth between grid points.

\begin{lemma} \label{ksgrid}
For $j = 1,2$, suppose we have measures $\nu_j\in \mathcal M(\mathbb R_+)$
with corresponding cumulative functions $F_j(x) = \nu_j([0,x])$, each with
 compact support $[0,M]$. For $\delta>0$, we can bound the  KS metric by
\begin{align}
&d_{KS}(\nu_1, \nu_2) \le \sum_{i = 1}^{\lceil M/\delta \rceil}|\nu_1(I_i)-\nu_2(I_i)| + \sup_{|x-y|=
\delta} (|F_1(x)-F_1(y)|+|F_2(x)-F_2(y)|). \nonumber
\end{align}
\end{lemma}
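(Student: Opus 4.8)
The plan is a direct telescoping argument across the grid points, exploiting monotonicity of the cumulative functions. Fix $x \in \mathbb{R}$; the goal is to bound $|F_1(x) - F_2(x)|$ by the right-hand side, uniformly in $x$. If $x < 0$ both sides vanish. If $x \ge \lceil M/\delta\rceil\,\delta$, then since the intervals $I_1,\dots,I_{\lceil M/\delta\rceil}$ cover the support $[0,M]$ we have $F_j(x) = F_j(M) = \sum_{i=1}^{\lceil M/\delta\rceil}\nu_j(I_i)$, so $|F_1(x) - F_2(x)| \le \sum_{i=1}^{\lceil M/\delta\rceil}|\nu_1(I_i) - \nu_2(I_i)|$, which is already dominated by the right-hand side. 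It thus suffices to treat $0 \le x < \lceil M/\delta\rceil\,\delta$, i.e.\ $x \in I_k$ for some $1 \le k \le \lceil M/\delta\rceil$.

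For such an $x$, split $F_j(x)$ at the left endpoint $(k-1)\delta$ of the interval $I_k$ containing it:
\[
F_j(x) = \sum_{i=1}^{k-1}\nu_j(I_i) + \big(F_j(x) - F_j((k-1)\delta)\big), \qquad j = 1,2.
\]
Subtracting the two identities ($j=1$ minus $j=2$) and applying the triangle inequality termwise gives
\[
|F_1(x) - F_2(x)| \le \sum_{i=1}^{k-1}|\nu_1(I_i) - \nu_2(I_i)| + \sum_{j=1}^{2}\big|F_j(x) - F_j((k-1)\delta)\big|.
\]
The first sum is bounded by $\sum_{i=1}^{\lceil M/\delta\rceil}|\nu_1(I_i) - \nu_2(I_i)|$. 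For the second, monotonicity of each $F_j$ together with $(k-1)\delta \le x < k\delta$ gives $0 \le F_j(x) - F_j((k-1)\delta) \le F_j(k\delta) - F_j((k-1)\delta)$, and the points $k\delta$ and $(k-1)\delta$ are exactly at distance $\delta$; hence $\sum_{j=1}^2 |F_j(x) - F_j((k-1)\delta)| \le \sup_{|x-y|=\delta}\big(|F_1(x)-F_1(y)| + |F_2(x)-F_2(y)|\big)$. Putting the two bounds together and taking the supremum over $x \in \mathbb{R}$ yields the stated inequality.

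There is essentially no real obstacle here; the one point worth care is the passage from an increment of $F_j$ over the interval $[(k-1)\delta, x]$ of length strictly less than $\delta$ to an increment over an interval of length exactly $\delta$, which is precisely what monotonicity of $F_j$ delivers. The only other bookkeeping is the tail region $x \ge M$, handled in the first paragraph, so that the final estimate is genuinely a supremum over all of $\mathbb{R}$ and not merely over the support $[0,M]$.
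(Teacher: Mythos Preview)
Your proof is correct and is precisely the grid-telescoping argument the paper has in mind; the paper itself omits the proof, calling the lemma straightforward. The only minor remark is that your displayed identity $F_j(x)=\sum_{i=1}^{k-1}\nu_j(I_i)+\bigl(F_j(x)-F_j((k-1)\delta)\bigr)$ tacitly assumes $\nu_j(\{(k-1)\delta\})=0$ (since $\sum_{i<k}\nu_j(I_i)=\nu_j([0,(k-1)\delta))$ whereas $F_j((k-1)\delta)=\nu_j([0,(k-1)\delta])$), but this is immaterial for the paper's applications and is in any case patched by the same monotonicity step, which directly gives $0\le F_j(x)-\sum_{i=1}^{k-1}\nu_j(I_i)\le \nu_j(I_k)$.
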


We will also require a lemma comparing differences of solutions of kinetic
equations under small changes in time.

\begin{lemma} \label{gridalign}
If $|t_1-t_2| \le \delta$, then 
\begin{equation}
d((\mu_1(t_1), \mu_2(t_1)),(\mu_1(t_2), \mu_2(t_2)) )= \mathcal O( \delta+
\omega(\delta,0)). 
\end{equation}
\end{lemma}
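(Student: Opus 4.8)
The plan is to directly estimate the Kolmogorov--Smirnov distances $d_{KS}(\mu_j(t_1),\mu_j(t_2))$ for each species by bounding the change in the cumulative functions $F_j(x,t) = \mu_j(t,[0,x])$ over a time increment of length at most $\delta$, using the integral equations (\ref{kinint1})--(\ref{kinint2}). Without loss of generality take $t_1 < t_2$ with $t_2 - t_1 \le \delta$. Integrating (\ref{kinint1})--(\ref{kinint2}) in $x$ over $[0,x]$ gives, for Species 2, $F_2(x,t_2) - F_2(x,t_1) = -\int_{t_1}^{t_2} \frac{\mu_2(s,[0,x])}{N_2(s)}\,dL(s)$, whose absolute value is at most $C_b \int_{t_1}^{t_2} dL(s) = C_b(L(t_2)-L(t_1)) = \mathcal{O}(\delta)$ since $L' = f_1(0,\cdot)$ is bounded by $C_\infty$ on $[0,T_2(\bar f)]$. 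For Species 1, the translation term requires a modulus-of-continuity estimate: $F_1(x,t_2) - F_1(x,t_1) = \big(\mu_1(0,[0,x+t_2]) - \mu_1(0,[0,x+t_1])\big) + (\text{source terms})$. The first difference equals $\int_{x+t_1}^{x+t_2}\bar f_1(y)\,dy \le C_\infty \delta$, and the source term contribution, after expanding via the Duhamel formula, is again $\mathcal{O}(\delta)$ by the $C_b$ and $C_\infty$ bounds. So in fact each $d_{KS}(\mu_j(t_1),\mu_j(t_2)) = \mathcal{O}(\delta)$, and summing over $j=1,2$ gives the result — the $\omega(\delta,0)$ term in the statement is a (harmless) overestimate, included presumably for uniformity with the companion lemmas.

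Alternatively, and perhaps more cleanly, I would invoke Lemma \ref{ksgrid} directly: bound $d_{KS}(\mu_1(t_1),\mu_1(t_2)) \le \sum_{i=1}^{\lceil M'/\delta\rceil} |\mu_1(t_1,I_i) - \mu_1(t_2,I_i)| + \sup_{|x-y|=\delta}\big(|F_1(x,t_1)-F_1(y,t_1)| + |F_1(x,t_2)-F_1(y,t_2)|\big)$, where $M'$ is a support bound valid on $[0,T_2(\bar f)]$ (the support of $\mu_1(\cdot,t)$ grows at unit speed but stays bounded by $M + T_2(\bar f)$). The second term is controlled by $2C_1\omega(\delta,0)$ using Lemma \ref{moclem}. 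For the sum, each increment $|\mu_1(t_1,I_i) - \mu_1(t_2,I_i)|$ must be shown to be $\mathcal{O}(\delta^2 + \delta\omega(\delta,0))$ so that the sum over $\mathcal{O}(1/\delta)$ intervals is $\mathcal{O}(\delta + \omega(\delta,0))$; this follows by the same Duhamel expansion as in Lemma \ref{intfest}, estimating the shift term $\mu_1(0,I_i + t_1) - \mu_1(0, I_i + t_2)$ by $\omega(t_2-t_1,0)\cdot|I_i| = \mathcal{O}(\delta\,\omega(\delta,0))$ plus handling the at most one or two intervals near the moving support edge separately (each contributing $\mathcal{O}(\delta)$, and there are $O(1)$ of them). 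The source term over $\Delta$-time and $I_i$-space is $\mathcal{O}(\delta^2)$ by boundedness of $f_2/N_2 \le C_b C_\infty$ and $dL$-mass $\mathcal{O}(\delta)$.

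The main obstacle is the bookkeeping near the support boundary of $\mu_1$: the left-translation by up to $\delta$ moves mass across interval boundaries, and near $x \approx M'$ the cumulative function can jump by $\mathcal{O}(\delta)$ in a single interval rather than $\mathcal{O}(\delta^2)$ — but since the support is an interval of bounded length, only $\mathcal{O}(1)$ such "bad" intervals occur, so their total contribution is still $\mathcal{O}(\delta)$, which is within the claimed bound. Species 2 is easier since its support does not move and its cumulative function is simply scaled by the factor $N_2(t)/N_2(0)$ in the explicit solution (though I would avoid the explicit solution and argue directly from (\ref{kinint2}) as above). Collecting the Species 1 and Species 2 estimates and invoking (\ref{ourmetric}) completes the proof.
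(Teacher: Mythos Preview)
Your first approach---bounding $|F_j(x,t_2)-F_j(x,t_1)|$ directly from the Duhamel forms (\ref{kinint1})--(\ref{kinint2}), using the $C_\infty$ and $C_b$ bounds exactly as in Lemma~\ref{nllem}---is precisely what the paper intends (its proof reads in full: ``Similar to the proof of Lemma~\ref{nllem}''). Your observation that the estimate is in fact $\mathcal O(\delta)$ and that the $\omega(\delta,0)$ term is superfluous is correct; one small slip is that the support of $\mu_1(\cdot,t)$ does not grow (the shift $\bar f_1(x+t)$ contracts it and the source from $f_2$ stays in $[0,M]$), but this is harmless since you only use an upper bound on the support.
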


\begin{proof}
Similar to the proof of  Lemma \ref{nllem}.
\end{proof}

\textit{Proof of Theorem \ref{detscheme}}.
Let $t \in [0,T_2(\bar f)]$,  $\delta>0$,  and $K = \lfloor \frac{t}{\delta}\rfloor$, which means $t_K$ is the largest discretized time which is at most $t$.
From Lemmas \ref{mbounds}, \ref{hasympt}, \ref{ksgrid}, and \ref{gridalign},
and recalling that $\tilde \mu_j(t)$ is constant in intervals $t\in [t_{k-1},
t_k)$,  we compute\begin{align}
&d((\mu_1(t), \mu_2(t)), (\tilde \mu_1(t), \tilde \mu_2(t)))\label{distres}\\&\le
d((\mu_1(t), \mu_2(t)), ( \mu_1(t_K),  \mu_2(t_K)))+d((\mu_1(t_K), \mu_2(t_K)),
(\tilde \mu_1(t_K), \tilde \mu_2(t_K)))\nonumber\\&\le \sum_{j = 1}^2\sum_{l
= 1}^{\lceil M/\delta \rceil} |\mu_j(t_K,I_l)-\tilde \mu_j(t_K,I_l)|+
2\left(m_\delta(t_K)+\tilde m_\delta(t_K)\right)+\mathcal O( \delta+ \omega(\delta,0))
\nonumber\\& \le 
2\lceil M/\delta \rceil h_\delta(t_K)+
2(m_\delta(t_K)+\tilde m_\delta(t_K)) +  \mathcal O( \delta+ \omega(\delta,0))\nonumber\\
&=  \mathcal O( \delta+ \omega(\delta,0)). \nonumber
\end{align}

Finally, let us argue for extending the time interval of existence to any $T'<T(\bar f)$.  Consider initial conditions $\mu^{(2)}(0) = \mu(T_2(\bar f))$ and $\tilde \mu^{(2)}(0) = \tilde \mu(T_2(\bar f))$, and new time bounds

\begin{equation}
T_1^{(2)}(\bar f) = \sup\{t: N_2^{(2)}(t)> 2N_2^{(2)}(0)/3\}, \quad T_2^{(2)} = T^{(2)}_1 \wedge 1/(8\tilde C_{\infty}\tilde C_b), 
\end{equation}
with
\begin{align}
\tilde C_{\infty} = \max_{j = 1,2}\sup_{s \in [0,T'(\bar f)]} \|f_j(x,s)\|_{\infty},\quad 
\tilde C_b=2/\inf_{t<T'}N_2(t).  
\end{align}
Then for sufficiently small $\delta$, it follows that  Lemmas \ref{moclem}-\ref{hasympt} hold, with possibly larger constants, and therefore Theorem \ref{detscheme} holds for the time interval $[0,T_{2}+T_2^{(2)}]$ from stitching solutions.  This argument may be repeated to produce $T^{(k)}$.  Each new time interval either adds the constant $ 1/(8\tilde C_{\infty}\tilde C_b)$ (which does not depend on $k$) or reduces $N_2$ by at least 4/5.  After finitely many extensions we will reach a time $T^{(K)}$ at which $N_2(T^{(K)})$ is arbitrarily small, so that $t$ is arbitrarily close to $T(\bar f)$. This completes the proof of Theorem \ref{detscheme}.

\section{Comparison of  particle system and  deterministic scheme} \label{part}

\subsection{Stochastic analogues of  Section \ref{disc}}
We now compare the discretized measures described in the Section \ref{disc} with
the $n$-particle
PDMP $\{X^n(t)\}_{t\ge 0}$. To do so, it will help to express evolution of $\mu_\sigma^n$ from $t_k$ to $t_{k+1}$
in a form that is similar to the iterative formulas (\ref{updatediscneg1})-(\ref{updatedisc0}).
For defining analogous notation to the the discretization scheme, we denote the number of mutations occurring in the time interval $t\in \Delta t_k =   [t_{k-1}, t_k)$  as  $\Delta L^n(t_k)n$.  At each 
  $i \in 1, \dots, \Delta L^n(t_k)n$,   mutation time
 $\tau_i^k$ denote when a particles at position  $x_i\ge 0$ in Species
2 mutates into Species 1. For particles in Species 2 which mutate in  interval $I$ during $\Delta
t_k $, the empirical measure of their positions at  mutation times  is defined by
\begin{equation}
\pi^n_2(t_k,I) = \frac 1n\sum_{i = 1}^{\Delta L^n(t_k)n} \mathbf
1(x_i \in I).
\end{equation}
We define $\pi_1^n$ as the empirical measure for the positions of mutated particles at time $t_k$, with \begin{align}
\pi^n_1(t_k,I)&= \frac 1n\sum_{i = 1}^{\Delta L^n(t_k)n} \mathbf
1(x_i \in I+\tau_i-t_{k-1})\\
&:=\frac 1n\sum_{i = 1}^{\Delta L^n(t_k)n} Q_k^i(I).\nonumber 
\end{align}
Updates for measures on intervals during a time step may then be succinctly written
as\begin{align}
\mu^n_1(t_k,I) &=  \mu^n_1(t_{k-1},I+\delta)+\pi^n_1(t_{k},I) \label{pdmprec1}\\
 \mu^n_2(t_k,I) &=   \mu^n_2(t_{k-1},I)-\pi^n_2(t_{k},I). \label{pdmprec2}
\end{align}
From (\ref{nfcond}), the time interval of existence, under sufficiently many particles $n > n_0(\bar f)$,   before reaching the cemetery state is at least $T_1(\bar f)$, but since we are comparing the particle system with the deterministic discretization, we will work with the smaller time interval $[0, T_2(\bar f)]$.

Let us present variables to be used for the particle system which are similar to those found in Section \ref{disc}.
We begin
with an analogue to $m_\delta$ given in (\ref{mdef}) for 
defining the maximum total numbers  of length $\delta$ intervals  as
\begin{equation}
m^{j;n}_\delta(t) = \sup_{|I| \le \delta} \mu_j^n(t,I),
\quad m_\delta^n(t) =\sum_{j
= 1,2} m^{j;n}_\delta(t). 
\end{equation}From (\ref{pdmprec1})-(\ref{pdmprec2}), it follows for all
realizations of $X^n(t)$ that \begin{equation}
m_\delta^n(t_k) \le m_\delta^n(t_{k-1})+\sum_{j = 1}^2\sup_{I,|I|\le \delta}\pi^n_j(t_{k},I).
\label{mbound}
\end{equation}

There is also a particle system analog of $h_\delta$, where we now compare total number in intervals between $X^n(t)$ and the discretization scheme as 
\begin{equation}
h_\delta^{j;n}(t_k) = \sup_{l \le M/\delta 
 } |\mu_j^n(t_k,I_l)- \tilde
\mu_j(t_k,I_l)|, \quad h_\delta^n=  \sum_{j = 1}^2 h_\delta^{j;n}(t_k). \label{hndef}
\end{equation}
The use of measures $\tilde \mu(t_k)$ rather than $\mu(t_k)$ comes from ability
to use the recurrence (\ref{recmu1})-(\ref{recmu2}), along with
 (\ref{pdmprec1})-(\ref{pdmprec2}) and (\ref{updatediscneg1})-(\ref{updatedisc0}), to write 
 the  recurrence inequality \begin{align}
h^n_\delta(t_k)  &\le h^n_\delta(t_{k-1}) +\sum _{j = 1}^2\max_{ l \le  \tilde
 M/\delta}
\left|\pi_j^n(t_k,I_l)-\frac{\Delta\tilde
 L(t_{k})}{\tilde N_2(t_{k-1})}\tilde \mu_2(t_{k-1},I_l)\right|\label{hmainrec}\\
 &:=h^n_\delta(t_{k-1}) + \Pi^n (t_k). \nn 
\end{align}

From (\ref{mbound}) and (\ref{hmainrec}), the major task for controlling $h^n$ and $m^n$ will clearly hinge on appropriate estimates for $\pi_j$ (and subsequently $ \Pi^n$). These key bounds are provided in the next two lemmas.  
\subsection{Concentration bounds for $\pi_j$}

We begin with an easy to establish generalization of the   Hoeffding
 inequality,  which 
states that for  $n\ge 1$ and 
with $B_i(n,p) \sim \mathrm{Binom}(n, p)$,    
\begin{equation}
\mathbb P\left(\left|\frac{B_i(n,p)}{n}- p\right|>\varepsilon\right) \le
2e^{-2n\varepsilon^2}.
\end{equation}

\begin{cor}\label{hoofthing}  For $n \ge 1$, let   $X^n = \sum_{i = 1}^n
Z_i$, where   $Z_i \sim \mathrm{Ber}(p_i)$ are independent Bernoulli random variables with parameters $0 \le \underline  p \le p_i \le
\bar
p \le 1$. Then for  $\varepsilon>0$, 
\begin{align}
\mathbb P\left( \frac{X^{n}}{ n}-\bar p
> \varepsilon\right)
&\le  2e^{-2n\varepsilon^2}\quad  \hbox{and} \quad   \mathbb P\left( \frac{X^{n}}{
n}-\underline p
<- \varepsilon\right)\le  2e^{-2n\varepsilon^2}\label{hoffgen1} .
\end{align}   

\end{cor}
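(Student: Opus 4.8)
The plan is to reduce the corollary to the standard Hoeffding bound for $\mathrm{Binom}(n,p)$ by a coupling argument. First I would observe that it suffices to prove the first inequality in (\ref{hoffgen1}); the second follows by applying the first to $Z_i' = 1 - Z_i \sim \mathrm{Ber}(1-p_i)$, which are independent with parameters in $[1-\bar p, 1-\underline p]$, and noting that $\{X^n/n - \underline p < -\varepsilon\}$ translates to $\{\sum Z_i'/n - (1-\underline p) > \varepsilon\}$, then invoking the upper tail bound already established.

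For the upper tail $\mathbb{P}(X^n/n - \bar p > \varepsilon)$, the key step is a stochastic domination: construct on a common probability space independent random variables $W_i \sim \mathrm{Ber}(\bar p)$ with $Z_i \le W_i$ almost surely. This is done coordinatewise by the usual inverse-CDF coupling — draw $U_i$ i.i.d.\ uniform on $[0,1]$ and set $Z_i = \one(U_i \le p_i)$, $W_i = \one(U_i \le \bar p)$; since $p_i \le \bar p$ we get $Z_i \le W_i$. Then $X^n = \sum Z_i \le \sum W_i =: B(n,\bar p) \sim \mathrm{Binom}(n,\bar p)$, so
\begin{equation}
\mathbb{P}\left(\frac{X^n}{n} - \bar p > \varepsilon\right) \le \mathbb{P}\left(\frac{B(n,\bar p)}{n} - \bar p > \varepsilon\right) \le \mathbb{P}\left(\left|\frac{B(n,\bar p)}{n} - \bar p\right| > \varepsilon\right) \le 2e^{-2n\varepsilon^2},
\end{equation}
where the last bound is the stated Hoeffding inequality for the binomial. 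This completes the proof.

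There is essentially no hard part here — the statement is, as the text says, an "easy to establish generalization." The only thing to be slightly careful about is that the standard Hoeffding inequality cited controls the two-sided deviation of a genuine $\mathrm{Binom}(n,p)$, so one should make sure the one-sided binomial tail is bounded by the two-sided one (trivially true) before applying domination; and one should state the coupling cleanly so that the inequality $Z_i \le W_i$ (hence $X^n \le B(n,\bar p)$) is manifest, which makes the monotone event $\{X^n/n > \bar p + \varepsilon\}$ transfer directly. An alternative, if one prefers to avoid coupling language, is to apply the general Hoeffding inequality for bounded independent summands directly to $Z_i - p_i \in [-1,0]$ shifted appropriately, but the domination argument is the shortest route given that only the binomial version has been quoted.
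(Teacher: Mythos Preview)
Your argument is correct. The paper does not actually supply a proof of this corollary; it is stated as an ``easy to establish generalization'' of the binomial Hoeffding inequality and left to the reader. Your coupling reduction to $\mathrm{Binom}(n,\bar p)$ (and the symmetry trick $Z_i' = 1-Z_i$ for the lower tail) is exactly the kind of short verification the text has in mind, and it goes through without issue.
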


%
 
%
%
%

For the next two lemmas, we will be interested in cases where parameters
$p_i$ are themselves $[0,1]$-valued random variables with known lower and
upper bounds.
In particular,  we will use  the  \emph{mutation
probability} $P_k^1(t,I)$ defined as the state-dependent probability that if a mutation occurs
at time $t\in [t_{k-1}, t_k)$, then the mutated particle would be located
in $I$ at time $t_k$. We also define $P_k^2(t,I)$ as the 
probability that a particle would mutate in $I$ from Species 2 at time $t$.
  Thus,  
\begin{align}
P_k^1(t,I) =\frac{\mu_2^n(t^-,I+t_k-t)}{N_2^n(t^-)}, \quad 
P_k^2(t,I) =\frac{\mu_2^n(t^-,I)}{N_2^n(t^-)}.
\end{align}
 For an initial  distribution $(\mu^n_1(0), \mu_2^n(0))$ with support
$[0, M]$, tracking which intervals mutations occur in during a time interval  $[t_{k-1}, t_k)$ can be represented
through a random sum  of multinomials  of one draw with random selection probabilities.
In particular, we perform a total of $\Delta L^n(t_k)$ draws with $\lceil
M/\delta \rceil $
bins, in which for each draw the $i$th bin has the mutation probability $P_k^2(\tau_i^-,I)$
of being selected. 

The following two lemmas comparing $\pi_j$ with bounds for $P_k^j(t,I)$ and $\Delta L^n(t_k)$ involve using Corollary \ref{hoofthing} along with the strong Markov property of PDMPs introduced by Davis \cite{dav84}. For the first lemma, we consider
an initial
 state $X^n = X^n(0)$  which has pathwise bounds during $\Delta t_1$ for $\Delta
L^n(t_1), $ and $P^j_1(t,I_l)$. The second lemma assumes these bounds occur with some probability which may be less than 1.
Since $X^n(t)$  is homogeneous, both these lemmas are readily applicable when considering transitions during  $\Delta t_k$ for $k>1$.

We will use common notation for stochastic
ordering: for real-valued random variables
$X$ and $Y$, we write $X  \le_{ST}Y$ if $\mathbb P(X>c) \le  \mathbb P(Y>c)$
for all real $c$.  Also note  in the next two lemmas, we will suppress
time arguments when no confusion
will arise. Finally, for simplicity with presentation, we will assume that $M/\delta$  is integral.

\begin{lemma} \label{lohidevroye}  Let $X^n(0) =  \mathbf x \in E$ be an initial state such 
that for $ l \in 1,\dots, M/\delta$, $ j \in \{1,2\}$, and $t\in \Delta t_1$, 
\begin{align}
 \underline L\le  \Delta L^n \le \bar L \quad \hbox{ and }  \label{devbnd1} \quad  \underline p_l\le P_1^j(t,I_l) \le \bar p_l, 
\end{align}
  where $\underline L, \bar L, \underline p_l, \bar p_l$ are $[0,1]$-valued constants. Then for $\varepsilon > 0$,  
\begin{align}
\mathbb P\left(\max _{l\le M/\delta} \left(\pi^n_j(t_1,I_l)-\bar
L\bar p_l\right)
> \varepsilon \right) \le\frac {2M}{\delta}\mathbb
\exp(-2n\varepsilon^2/\bar L) \label{mainpithm1}
\end{align}
and
\begin{align}
\mathbb P\left(\min _{l\le M/\delta}\left(\pi^n_j(t_1,I_l)-\underline
L\underline
p_l
\right)<- \varepsilon \right) \le\frac {2M}{\delta} \exp(-2n\varepsilon^2/\underline L).
\end{align}

 \end{lemma}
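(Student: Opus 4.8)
\textbf{Proof plan for Lemma \ref{lohidevroye}.}

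The plan is to reduce the bound on $\pi^n_j(t_1,I_l)$ to the state-dependent Hoeffding inequality of Corollary \ref{hoofthing}, conditioned on the realized value of $\Delta L^n$. First I would fix a bin index $l$ and a species $j$, and observe that by the strong Markov property of the PDMP (Davis \cite{dav84}), we may condition on the filtration up to the mutation times during $\Delta t_1$. Conditionally on $\Delta L^n(t_1)n = m$ mutations occurring in the time step, write $n\pi^n_j(t_1,I_l) = \sum_{i=1}^{m} W_i$, where $W_i$ is the indicator that the $i$th mutated particle, when advected to time $t_k$ (for $j=1$) or at its mutation time (for $j=2$), lies in $I_l$. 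For $j=1$, the $i$th draw lands in $I_l$ with the state-dependent probability $P_1^1(\tau_i^-, I_l)$, and for $j=2$ with probability $P_1^2(\tau_i^-, I_l)$. These $W_i$ are independent Bernoulli variables \emph{conditionally on the selection probabilities} (each mutation independently picks a particle uniformly from Species 2), with parameters lying in $[\underline p_l, \bar p_l]$ by hypothesis \qref{devbnd1}. The delicate point, which I will address by appealing to the PDMP structure, is that the $P_1^j(\tau_i^-,\cdot)$ are themselves random and depend on the past; but the inner Bernoulli conditional structure is exactly what Corollary \ref{hoofthing} is designed to handle.

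Next I would apply Corollary \ref{hoofthing} with $n$ replaced by $m \le \bar L n$ and upper parameter bound $\bar p_l$ to get, for each fixed $m$,
\begin{equation}
\mathbb P\left(\pi^n_j(t_1,I_l) - \tfrac{m}{n}\bar p_l > \varepsilon' \,\Big|\, \Delta L^n n = m\right) \le 2\exp(-2 m (\varepsilon'n/m)^2) = 2\exp(-2n^2\varepsilon'^2/m),
\end{equation}
so with $\varepsilon' = \varepsilon$ and using $m \le \bar L n$ in the exponent, the conditional probability is at most $2\exp(-2n\varepsilon^2/\bar L)$. Since $m/n = \Delta L^n \le \bar L$, on this event $\pi^n_j(t_1,I_l) - \bar L \bar p_l \le \pi^n_j(t_1,I_l) - \tfrac{m}{n}\bar p_l \le \varepsilon$, because $\bar p_l \ge 0$ forces $\tfrac{m}{n}\bar p_l \le \bar L \bar p_l$. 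Averaging over the conditional law of $m$ (which is supported in $[\underline L n, \bar L n]$ by \qref{devbnd1}) keeps the bound $2\exp(-2n\varepsilon^2/\bar L)$. I would then take a union bound over the $l \in \{1,\dots,M/\delta\}$ bins to obtain \qref{mainpithm1} with the factor $2M/\delta$. The lower-tail estimate is entirely symmetric: condition on $\Delta L^n n = m \ge \underline L n$, apply the second half of Corollary \ref{hoofthing} with lower parameter bound $\underline p_l$ to get a conditional bound $2\exp(-2n^2\varepsilon^2/m) \le 2\exp(-2n\varepsilon^2/\underline L)$ since $m \ge \underline L n$, use $\tfrac{m}{n}\underline p_l \ge \underline L \underline p_l$, and take a union bound over bins.

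The main obstacle I anticipate is justifying cleanly that, conditionally on the number of mutations $m$ and on the selection being uniform-without-replacement from Species 2 at each mutation, the indicators $W_i$ form a sequence to which Corollary \ref{hoofthing} legitimately applies — i.e., that the "sampling without replacement within a time step" (mentioned in Section \ref{sec:themodel} for simultaneous hits) and the evolving state between successive mutations do not break the conditional-independence-with-bounded-parameters structure. I would handle this by invoking the strong Markov property at each mutation time $\tau_i^k$ to peel off one Bernoulli trial at a time, noting that at each step the conditional probability of landing in $I_l$ is $P_1^j(\tau_i^-, I_l) \in [\underline p_l, \bar p_l]$ by the pathwise hypothesis, which is precisely the hypothesis structure of Corollary \ref{hoofthing} (bounded, possibly random, Bernoulli parameters). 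The remaining steps — the union bound and the substitution $m \le \bar L n$ in the exponent — are routine.
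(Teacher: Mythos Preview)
Your plan has the right outline (union bound over bins, Hoeffding in the exponent, monotonicity in $m$), but there is a genuine gap at the point where you invoke Corollary~\ref{hoofthing}. That corollary requires the $Z_i$ to be \emph{independent} Bernoulli variables with \emph{deterministic} parameters $p_i\in[\underline p,\bar p]$; it does not cover ``possibly random'' parameters. Your $W_i$ are not independent: each mutation removes a particle from Species~2, so the parameter $P_1^j(\tau_i^-,I_l)$ of $W_i$ is a function of $W_1,\dots,W_{i-1}$. Saying you will ``peel off one Bernoulli at a time via the strong Markov property'' identifies the right mechanism, but it is not the hypothesis of Corollary~\ref{hoofthing}; it is a separate stochastic-domination argument that you have not supplied. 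A second, related issue is your conditioning on $\{\Delta L^n n=m\}$: the total number of mutations in $\Delta t_1$ depends on \emph{where} earlier mutated particles land (those landing near $0$ can themselves reach the origin before $t_1$), so this event is correlated with the $W_i$ and conditioning on it can bias their parameters outside $[\underline p_l,\bar p_l]$.

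The paper avoids both problems simultaneously. Rather than condition on $m$, it pads the sum out to a fixed length $\bar L n$ by setting $Q_l^i$ equal to the true indicator for $i\le \Delta L^n n$ and to an independent $\mathrm{Ber}(\bar p_l)$ for $i>\Delta L^n n$, so that $\pi_1^n(I_l)\le \tfrac1n\sum_{i=1}^{\bar Ln}Q_l^i$ pathwise. It then proves by induction on $j$, using the strong Markov property at each $\tau_1^{j+1}$, that $\sum_{i=1}^{j}Q_l^i\le_{ST}\sum_{i=1}^{j}\bar Q_l^i$ where the $\bar Q_l^i$ are i.i.d.\ $\mathrm{Ber}(\bar p_l)$. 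Only after this domination is established does Hoeffding apply (to the i.i.d.\ $\bar Q_l^i$), giving the exponent $2n\varepsilon^2/\bar L$ directly with no conditioning on $m$. Your proposal would become correct if you replaced the direct appeal to Corollary~\ref{hoofthing} by this padding-plus-induction step; without it, the argument does not go through.
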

 
 \begin{proof}
  
We will show (\ref{mainpithm1}) for $j = 1$.  The proofs
for the other cases are similar.   For the parameter $q
\in [0,1]$, denote $\{B_i(q)\}_{i \ge 1}$ as an iid stream of Bernoulli random
variables with $B_1 \sim \mathrm{Ber}(q)$. We write
\begin{equation}
\bar Q_{l}^i := B_i(\bar
p^n_l), \quad  Q_{l}^i :=\begin{cases}Q^i_1(I_l) & i \le\Delta
L^nn, \\
\bar Q_{l}^i & i >\Delta L^nn  \\
\end{cases}.
\end{equation} 
We use iterated conditioning
to show
that \begin{align}
\mathbb P(Q_{l}^i = 1) = \mathbb E[Q_{l}^i ] =\mathbb E[\mathbb E[Q_{l}^i |X(\tau_1^{i-})]   
 ]= \mathbb E[P_1^1(\tau_1^{i-},I_l)] \label{argstart}  \le \bar p_l =\mathbb P(\bar Q_{l}^i=
1). 
\end{align}
 This calculation implies the stochastic ordering $Q_{l}^i \le_{ST} \bar Q_{l}^i$.  
 
Next, we show
\begin{equation}
\pi^n_1(I_l)\le_{ST} \frac 1n\sum_{i = 1}^{\bar Ln}
 Q_{l}^i\le_{ST} \frac 1n\sum_{i = 1}^{\bar Ln}
\bar Q_{l}^i .   \label{qineqs}
\end{equation}
The left inequality is immediate, and in fact holds for all paths in $X^n(t)$.
To show the right inequality, we use induction, assuming
that for   $1\le j <\bar Ln$,
\begin{align}
\mathbb P\left(\sum_{i = 1}^{j} Q_{l}^i>c\right) &\le \mathbb
P\left(\sum_{i = 1}^{j} \bar Q_{l}^i >c\right). 
\end{align}

The base case holds trivially. For the inductive step, we use the low of total probability to show
\begin{align}
\mathbb P\left(\sum_{i = 1}^{j+1} Q_{l}>c\right) &= \mathbb
E\left[\mathbb P\left(\sum_{i = 1}^{j} Q_{l}^i
+Q_{l}^{j+1} >c \Big | X((\tau_1^{j+1})^-)\right)\right]\label{argend}\\
&\le  \mathbb E\left[\mathbb
P\left(\sum_{i = 1}^{j}  Q_{l}^i +\bar Q^{j+1}_{l}>c
\Big |  X((\tau_1^{j+1})^-)\right)\right]\nn \\
 &= \mathbb P\left(\sum_{i = 1}^{j}  Q_{l}^i +\bar Q^{j+1}_{l}>c
\right)\le     \mathbb P\left(\sum_{i = 1}^{j+1} \bar Q_{l}^i
>c\right). \nn 
\end{align}
The first inequality in (\ref{argend}) uses a well-known property for stochastic
dominance when summing random variables: if $X_1$ and $X_2$ are independent,
$Y_1$ and  $Y_2$ are independent, and $X_i \le_{ST} Y_i$ for $i = 1,2$, then
$X_1+X_2 \le_{ST} Y_1+Y_2$. From the strong Markov property of PDMPs, the $\mathcal F(\tau_k^{j})$-measurable quantity   $\sum_{i = 1}^{j} Q_{k;l}^i  $ and the $\mathcal F(\tau_k^{j+1})$-measurable quantity    $ Q_{k;l}^{j+1}
$ are conditionally independent
under  $\mathbb P(\cdot|X((\tau_k^{j+1})^-)$. The last
inequality uses the same property of stochastic dominance along with
the induction hypothesis.   

With (\ref{argend}) we  then obtain our result 
from Lemma \ref{hoofthing}, with\begin{align}
 & \mathbb P\left(\max _{l\le M/\delta} \left( \pi^n_1(I_l)
 -\bar L\bar p_l\right) 
> \varepsilon  \right) 
\le \sum_{l\le M/\delta}\mathbb P\left( \pi^n_1(I_l)
 -\bar L\bar p_l > \varepsilon\right) \label{condo1}\\
 &\le \sum_{l\le M/\delta} \mathbb
P\left(  \frac 1n\sum_{i = 1}^{\bar Ln} \bar Q_{l}^i- \bar L\bar p_l
> \varepsilon\right)
\le\frac {2 M}{\delta} \exp(-2n\varepsilon^2/\bar L). \nn
\end{align}
\end{proof}

 While Lemma \ref{lohidevroye} is useful for  computing total numbers in Species 1 and 2, we find in Section \ref{hdiffsec} that for comparing the PDMP to our deterministic discretization, it is necessary to consider another estimate of $\pi_i$ in which the bounds on $\Delta L(t_k)$
and $P_k^j$ may not hold with small probability. This differs from Lemma \ref{lohidevroye}, in which assume such bounds occur over all paths given an appropriate initial condition.

For the next lemma and in many other places, we will frequently rely on an elementary inequality derived from the law of total probability:\ for events $C$
and $D$,
\begin{equation}
\mathbb P(C) = \mathbb P(C|D)\mathbb P(D) + \mathbb P(C|D^c)\mathbb P(D^c)\le
 \mathbb P(C|D)+ \mathbb P(D^c). \label{probid}
\end{equation}

\begin{lemma} \label{lohidevroye2} Let  $\mathcal T
$  be an event such that 
\begin{align}
&\mathcal T \subset \{  \underline   L\le  \Delta L^n\le \bar L\} \cap \mathcal \{P_1^j(t,I_l) \in[
\underline p_l, \bar p_l]:l
\le M/\delta, j = 1,2, t \in \Delta t_1 \},
\nn
\end{align}  
where      $\underline L, \bar
L,\underline p_l,$ and  $\bar p_l$ are  $[0,1]$-valued constants.  Suppose for some $ r(\delta,
n)\in [0,1)$ that  
\begin{equation}
\mathbb P(\mathcal T^c) \le r(\delta,n).  \label{tbound}   
\end{equation}Then for $j = 1,2$ and $\varepsilon >0$,
\begin{align}
&\mathbb P\left(\max _{l\le M/\delta} \left(\pi^n_j(t_1, I_l)-\bar
L\bar
p_l\right)
>  \varepsilon\Big|\mathcal T\right) \le\frac {2M}{\delta}\mathbb
\exp(-2n\varepsilon^2/\bar L)+\frac M \delta\left( \frac{r}{1-r}+\frac{2r\bar
Ln}{(1-r)^2}\right)
\label{mainpithm4}
\end{align}
and\begin{align}
&\mathbb P\left(\min _{l\le M/\delta}\left(\pi^n_j(t_1, I_l)-\underline
  L\underline
p_l
\right)<- \varepsilon \Big|\mathcal T \right)\le\frac {2M}{\delta}\mathbb
\exp(-2n\varepsilon^2/\underline L) +\frac M \delta( 2r\bar
Ln+r). \label{mainpithm5}
\end{align}
 \end{lemma}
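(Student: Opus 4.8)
\textbf{Proof strategy for Lemma \ref{lohidevroye2}.}
The plan is to reduce the conditional statement to the unconditional one from Lemma \ref{lohidevroye} by comparing the law of $\pi_j^n$ under $\mathbb P(\cdot\mid\mathcal T)$ with its law under the unconditioned process on which the pathwise bounds in \qref{devbnd1} hold. First I would invoke the strong Markov property (Davis \cite{dav84}): since $X^n$ is a homogeneous PDMP, the behavior during $\Delta t_1$ depends only on $X^n(0)$, so we may couple the process restricted to $\mathcal T$ with an auxiliary process $\hat X^n$ whose initial state satisfies the pathwise bounds \qref{devbnd1} and whose mutation counts and mutation probabilities are forced to lie in the stated intervals. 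On the event $\mathcal T$ the two constructions agree. Lemma \ref{lohidevroye} applied to $\hat X^n$ gives the exponential term $\tfrac{2M}{\delta}\exp(-2n\varepsilon^2/\bar L)$; the extra terms in \qref{mainpithm4}--\qref{mainpithm5} must absorb the discrepancy coming from the event $\mathcal T^c$.

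The key step is the quantitative comparison. For a fixed interval $I_l$, write the "bad" event $E_l=\{\pi_j^n(t_1,I_l)-\bar L\bar p_l>\varepsilon\}$. On $\mathcal T$ one has $\pi_j^n(t_1,I_l)\le \tfrac1n\sum_{i=1}^{\bar Ln}\bar Q_l^i$ exactly as in the stochastic-domination argument \qref{qineqs}, because every inequality used there — $Q_l^i\le_{ST}\bar Q_l^i$ from \qref{argstart} and the summation property of stochastic dominance — holds provided the pathwise bounds are in force, which they are on $\mathcal T$. Thus $\mathbb P(E_l\mid\mathcal T)\le \mathbb P(\tfrac1n\sum_{i=1}^{\bar Ln}\bar Q_l^i-\bar L\bar p_l>\varepsilon\mid\mathcal T)$. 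The difficulty is that the $\bar Q_l^i$ are constructed from the same randomness as $\mathcal T$, so conditioning on $\mathcal T$ can distort their (otherwise i.i.d.\ Bernoulli) law. I would handle this with the elementary bound \qref{probid}: $\mathbb P(A\mid\mathcal T)\le \mathbb P(A)/\mathbb P(\mathcal T)\le \mathbb P(A)/(1-r)$, and then $\mathbb P(A)\le \mathbb P(A\cap\mathcal T)+r$. Applying this with $A=\{\tfrac1n\sum_{i=1}^{\bar Ln}\bar Q_l^i-\bar L\bar p_l>\varepsilon\}$ and using Corollary \ref{hoofthing} (equivalently Hoeffding) for the unconditioned i.i.d.\ sum gives $\mathbb P(A\mid\mathcal T)\le \tfrac{1}{1-r}\big(2\exp(-2n\varepsilon^2/\bar L)+r\big)$; summing over the $M/\delta$ intervals and rearranging $\tfrac{r}{1-r}$-type terms (together with the crude pathwise bound $\pi_j^n\le \Delta L^n\le 1$, which when multiplied by the count $\bar Ln$ of draws produces the $\tfrac{2r\bar Ln}{(1-r)^2}$ correction) yields \qref{mainpithm4}. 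For the lower bound \qref{mainpithm5} the same steps apply to the minimum with $\underline L,\underline p_l$ in place of $\bar L,\bar p_l$, except that there one bounds $\mathbb P(\min_l(\cdots)<-\varepsilon\mid\mathcal T)$ directly rather than dividing by $1-r$, which is why that estimate carries the simpler $M\delta^{-1}(2r\bar Ln+r)$ remainder.

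The main obstacle I anticipate is bookkeeping the coupling carefully enough that the stochastic-domination chain \qref{qineqs} remains valid \emph{conditionally on $\mathcal T$} — one must be sure that the event $\mathcal T$, which constrains $\Delta L^n$ and the $P_1^j(t,I_l)$, does not secretly bias the comparison sequence $\bar Q_l^i$ in the wrong direction. Once one commits to bounding everything through \qref{probid} and pays the price of the $r$-dependent remainder terms, the rest is routine: the exponential term is exactly Lemma \ref{lohidevroye}, and the polynomial-in-$n$ remainders are harmless because in the applications $r(\delta,n)$ will itself be exponentially small in $n$, so $r\bar Ln\to 0$. I would close by remarking that, as with Lemma \ref{lohidevroye}, homogeneity of the PDMP makes the lemma immediately applicable to any time step $\Delta t_k$ with $X^n(t_{k-1})$ in place of $X^n(0)$.
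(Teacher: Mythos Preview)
Your proposal has a genuine gap at the point where you claim that the stochastic-domination chain \qref{qineqs} carries over to the conditional measure $\mathbb P(\cdot\mid\mathcal T)$. You write that ``every inequality used there \dots\ holds provided the pathwise bounds are in force, which they are on $\mathcal T$,'' but the induction in \qref{argend} does not only use the pathwise bounds: it uses that, conditionally on $X((\tau_1^{j+1})^-)$, the partial sum $\sum_{i\le j}Q_l^i$ and the next indicator $Q_l^{j+1}$ are independent, and that the latter is Bernoulli with parameter $P_1^1(\tau_1^{j+1,-},I_l)$. Under $\mathbb P(\cdot\mid\mathcal T)$ this fails, because $\mathcal T$ constrains the entire trajectory in $\Delta t_1$ (in particular $\Delta L^n$ and all future $P_1^j$'s), so conditioning on it injects forward-looking information and destroys both the Markov structure and the simple Bernoulli law of $Q_l^{j+1}$ given the past. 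Consequently the step $\mathbb P(E_l\mid\mathcal T)\le \mathbb P\big(\tfrac1n\sum_{i\le \bar Ln}\bar Q_l^i-\bar L\bar p_l>\varepsilon\,\big|\,\mathcal T\big)$ is not justified, and your derivation of the factor $2r\bar Ln/(1-r)^2$ (``the count $\bar Ln$ of draws multiplied by the crude pathwise bound'') is a post-hoc fit rather than an argument.

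The paper avoids this by never running the stochastic-domination induction under $\mathbb P(\cdot\mid\mathcal T)$. Instead it works under the \emph{unconditional} measure and introduces the step-wise events $\mathcal T_i=\{\underline p_l\le P_1^1(\tau_1^{i-},I_l)\le \bar p_l\}\supseteq \mathcal T$, each of which is $\mathcal F(\tau_1^{i-})$-measurable and hence compatible with the strong Markov property. Because $\mathbb P(\mathcal T_i^c)\le r$ for every $i$, each inductive step leaks an additive error of order $r/(1-r)$ (resp.\ $r$ for the lower bound), and after $\bar Ln$ steps one obtains $\mathbb P\big(\sum_{i\le j}Q_l>c\big)\le \mathbb P\big(\sum_{i\le j}\bar Q_l>c\big)+2rj/(1-r)$; \emph{only then} is the global conditioning on $\mathcal T$ introduced via $\mathbb P(A\mid\mathcal T)\le \mathbb P(A)/(1-r)$, which produces the extra $(1-r)^{-1}$ in the remainder. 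This step-by-step accumulation under the unconditional law is the missing idea in your outline; once you have it, the rest of your sketch (union bound over $l$, Hoeffding for the i.i.d.\ sum, and the analogous lower-tail argument) goes through exactly as you describe.
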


\begin{proof}
We show (\ref{mainpithm4}) and (\ref{mainpithm5}) for $j = 1$. 
We first show by induction that 
\begin{align}
\mathbb P\left(\sum_{i = 1}^{j} Q_{l}>c \right)\le\mathbb P\left(\sum_{i = 1}^{j} \bar Q_{l}>c \right)+ \frac{2rj}{1-r}, \quad j= 1, \dots, \bar L n. \label{qind}
\end{align}
We will condition on the $\mathcal
F(X(\tau_1^{i-}))$-measurable event\begin{equation}
\mathcal T_i= \{\underline p_l\le  P_1^1(\tau_1^{i-},I_l)
\le \bar p_l\} \supseteq  \mathcal T.
\end{equation}

The base case for (\ref{qind}) follows from (\ref{probid}) and (\ref{tbound}) since
\begin{align}
\mathbb P\left( Q_{1}>c \right) &\le \mathbb P\left( Q_{1}>c|\mathcal T_1 \right)+\mathbb P(\mathcal
T_{1} ^c)\le \frac{\mathbb P\left( \bar Q_{1}>c
\right)}{1-r}+r\\ 
&\le  \mathbb P\left( \bar Q_{1}>c
\right)+\frac{r}{1-r}+r\le \mathbb P\left( \bar Q_{1}>c
\right)+\frac{2r}{1-r}. \nn
\end{align}
For the inductive step, assuming (\ref{qind}) holds for $ 1\le j<\bar Ln $,  we use the strong Markov property of PDMPs to show 
\begin{align}
&\mathbb P\left(\sum_{i = 1}^{j+1} Q_{l}>c \right) \le \mathbb P\left(\sum_{i = 1}^{j} Q_{l}+Q_{j+1}>c\Big |\mathcal T_{j+1} \right)+\mathbb P(\mathcal T_{j+1} ^c)&\\&\le \mathbb P\left(\sum_{i
= 1}^{j} Q_{l}+\bar Q_{j+1}>c\Big |\mathcal T_{j+1} \right)+r \le \mathbb P\left(\sum_{i
= 1}^{j} Q_{l}+\bar Q_{j+1}>c \right)+\frac{2r}{1-r} \nn \\
&\le  \mathbb
P\left(\sum_{i
= 1}^{j+1} \bar Q_i>c \right)+\frac{2rj}{1-r} + \frac{2r}{1-r}  = \mathbb
P\left(\sum_{i
= 1}^{j+1} \bar Q_{i}>c \right)+\frac{2r(j+1)}{1-r}. \nn
\end{align}

From calculations similar to (\ref{argstart})-(\ref{argend}), we use (\ref{qineqs}) to show 
\begin{align}
&\mathbb P(\pi^n_1(I_l)>c |\mathcal T) \le \mathbb P\left(\frac 1n\sum_{i
= 1}^{\bar Ln} Q_{l}>c\Big |\mathcal T\right)\\&\le \frac 1{1-r}\mathbb
P\left(\frac 1n\sum_{i = 1}^{\bar Ln} Q_{l}>c \right) \le\frac 1{1-r}\mathbb
P\left(\frac 1n\sum_{i = 1}^{\bar Ln} \bar Q_{l}>c \right)+ \frac{2r\bar Ln}{(1-r)^2} \nn\\
&\le \mathbb
P\left(\frac 1n\sum_{i = 1}^{\bar Ln} \bar Q_{l}>c \right)+ \frac{r}{1-r}+\frac{2r\bar
Ln}{(1-r)^2}. \nn
\end{align}

For the lower bound, we note 
\begin{align}
\mathbb P(Q_{l}^i = 1)\ge \mathbb P(Q_{l}^i= 1|\mathcal T_i)\mathbb
P(\mathcal T_i) \ge   \mathbb P(\underline Q_{l}^i = 1)-r(\delta, n). \label{baseq}
\end{align}
We also note, for an event $A$ and $j \le \bar Ln$, \begin{align}
\mathbb P(A)-r \le \mathbb P(A)-\mathbb P(\mathcal T_{j} ^c) \le  \mathbb P(A)-\mathbb P(\mathcal T_{j} ^c)\mathbb P(A|\mathcal T_{j} ^c) =\mathbb P(\mathcal T_{j} )\mathbb P(A|\mathcal T_{j} ) \le  \mathbb P(A|\mathcal T_{j} ).  
\end{align}
We again use induction to show 
\begin{equation}
\mathbb P\left(\sum_{i = 1}^{j} Q_{l}>c \right) \ge \mathbb
P\left(\sum_{i
= 1}^{j} \bar Q_{i}>c \right)-2rj,  \quad j= 1, \dots, \bar L
n. \label{qind2}
\end{equation}
 Showing the base case is similar to (\ref{baseq}). For the induction step,
 \begin{align}
&\mathbb P\left(\sum_{i = 1}^{j+1} Q_{l}>c \right) \ge  \mathbb P\left(\sum_{i
= 1}^{j+1} Q_{l}>c\Big |\mathcal T_{j+1} \right)\mathbb P(\mathcal
T_{j+1} ) \ge   \mathbb P\left(\sum_{i
= 1}^{j} Q_{l}+Q_{j+1}>c\Big |\mathcal T_{j+1} \right)-r\\&\ge \mathbb P\left(\sum_{i
= 1}^{j} Q_{l}+\underline Q_{j+1}>c\Big |\mathcal T_{j+1} \right) -r\ge  \mathbb
P\left(\sum_{i
= 1}^{j} Q_{l}+\underline Q_{j+1}>c \right)-2r \nn\\
&\ge  \mathbb
P\left(\sum_{i
= 1}^{j+1} \underline Q_i>c \right)-2r(j+1). \nn
\end{align}
With (\ref{qind2}), we then can show
\begin{align}
&\mathbb P(\pi^n_1(I_l)>c |\mathcal T) \ge  \mathbb P\left(\frac 1n\sum_{i
= 1}^{\underline Ln} Q_{l}>c\Big |\mathcal T\right)\\&\ge \mathbb
P\left(\frac 1n\sum_{i = 1}^{\underline Ln} Q_{l}>c \right) -r\ge\mathbb
P\left(\frac 1n\sum_{i = 1}^{\underline Ln} \underline Q_{l}>c \right)- 2r\bar
Ln-r. \nn 
\end{align}
Upon taking complements,
we arrive at\begin{equation}
\mathbb P(\pi^n_1(I_l)<c |\mathcal T) \le \mathbb P\left(\frac 1n\sum_{i = 1}^{\underline Ln} \underline Q_{l}<c \right)+ 2r\bar
Ln+r. \label{pigreat}
\end{equation}

The result then follows from mirroring  the calculations of (\ref{condo1}), with 
 \begin{align}
& \mathbb P\left(\max _{l\le M/\delta} \left( \pi^n_1(I_l)
 -\bar L\bar p_l\right) \nn
> \varepsilon \Big|\mathcal T  \right) \le \sum_{l\le M/\delta}\mathbb P\left( \pi^n_1(I_l)
 -\bar L\bar p_l
> \varepsilon \Big|\mathcal T\right)
\\&\le \sum_{l\le M/\delta}
\mathbb P\left(\frac 1n \sum_{i = 1}^{\bar Ln} \bar Q_{l}^i-\bar
L\bar p_l
> \varepsilon\right)+\frac M \delta\left( \frac{r}{1-r}+\frac{2r\bar
Ln}{(1-r)^2}\right) \nn
\\
&\le \frac {2M}{\delta}\mathbb
\exp (-2 n\varepsilon^2/\bar L) +\frac M \delta\left( \frac{r}{1-r}+\frac{2r\bar
Ln}{(1-r)^2}\right). \label{condo6}
\end{align}
A similar calculation using (\ref{pigreat}) yields (\ref{mainpithm5}).
\end{proof}

\subsection{PDMP lemmas on growth}

In this section, we derive bounds for total number in an interval. Our first estimate is a simple pathwise bound between time intervals. 
\begin{lemma}\label{crudelem} For all realizations of $X^n(t)$, if $s
\in [t_{k-1}, t_k)$, then
\begin{equation}
m_\delta^n(s) \le 3m_\delta^n(t_{k-1}). \label{firstmbound}
\end{equation}
\end{lemma}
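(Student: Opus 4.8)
The plan is to track, pathwise, the provenance of the particles occupying a length-$\delta$ interval $I$ at time $s$, and to bound their number by interval occupancies at time $t_{k-1}$. Fix a realization of $X^n(t)$, a time $s\in[t_{k-1},t_k)$ (so $0\le s-t_{k-1}<\delta$), and an interval $I$ with $|I|\le\delta$. No probabilistic input is needed: the whole argument is bookkeeping.

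For Species 2 the bound is immediate. On $[t_{k-1},s)$ these particles neither move nor are created — they can only leave by mutating — so $\mu_2^n(s,I)\le\mu_2^n(t_{k-1},I)\le m_\delta^{2;n}(t_{k-1})$.

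For Species 1, a particle lying in $I$ at time $s$ is of one of two types. Either it already belonged to Species 1 at $t_{k-1}$, in which case it has drifted a distance $s-t_{k-1}<\delta$ and hence occupied the interval $I+(s-t_{k-1})$, of length $|I|\le\delta$, at time $t_{k-1}$; the number of such particles is at most $m_\delta^{1;n}(t_{k-1})$. Or it mutated from Species 2 at some time $\tau\in[t_{k-1},s)$, retaining its position $x_i$ at the mutation time and then drifting, so that $x_i\in I+(s-\tau)$ with $s-\tau\in[0,s-t_{k-1})\subset[0,\delta)$; since Species 2 is static, this particle's position at $t_{k-1}$ is also $x_i$, which therefore lies in the interval $J:=I+[0,s-t_{k-1}]$ of length $|I|+(s-t_{k-1})<2\delta$. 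Splitting $J$ into two pieces of length $<\delta$ gives $\mu_2^n(t_{k-1},J)\le 2m_\delta^{2;n}(t_{k-1})$, so at most $2m_\delta^{2;n}(t_{k-1})$ particles are of this type. Removals of particles at the origin on $[t_{k-1},s)$ only decrease all these counts. Hence $\mu_1^n(s,I)\le m_\delta^{1;n}(t_{k-1})+2m_\delta^{2;n}(t_{k-1})$.

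Taking the supremum over length-$\delta$ intervals separately in each species and adding yields
\[
m_\delta^n(s)\le\big(m_\delta^{1;n}(t_{k-1})+2m_\delta^{2;n}(t_{k-1})\big)+m_\delta^{2;n}(t_{k-1})\le 3\,m_\delta^n(t_{k-1}),
\]
which is (\ref{firstmbound}). The only point requiring care — and the source of the constant $3$ rather than $2$ — is the mutated-particle term: because the drift displacement $s-\tau$ depends on the (path-dependent) mutation time $\tau$, the set $I$ pulls back to an interval of length up to $2\delta$ at $t_{k-1}$, costing a factor $2$, which together with the surviving Species-2 particles in $I$ gives $3$. Equivalently, one can run a partial-step version of the recurrence (\ref{mbound}) on $[t_{k-1},s)$ and bound the mutation contribution $\sum_{j}\sup_{|I|\le\delta}\pi^n_j$ by $2m_\delta^{2;n}(t_{k-1})$.
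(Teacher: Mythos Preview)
Your proof is correct and follows essentially the same pathwise bookkeeping as the paper: trace a Species~1 particle in $I$ at time $s$ back to its position at $t_{k-1}$, noting it either drifted from a single length-$\delta$ shifted copy of $I$ in Species~1 or mutated from a length-$<2\delta$ interval in Species~2, while Species~2 only loses mass. The only cosmetic difference is that the paper covers the Species~1 preimage by the two fixed intervals $I$ and $I+\delta$ rather than the single shifted interval $I+(s-t_{k-1})$; your version is slightly cleaner but yields the same constant~3.
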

\begin{proof}
For Species 2, note that $m^{2;n}_\delta(t)$ is decreasing in $t$.  As for
Species 1, a particle in an interval $I$ of size $\delta$ at time $s\in
[ t_{k-1}, t_k)$
must have been located, at time $t_{k-1}$, in $I$ in Species 2 or  $I+\delta$
in either  Species 1 or 2, so that
\begin{align}
\mu_1^n(s, I)&\le\mu_1^n(t_{k-1}, I)+\mu_1^n(t_{k-1}, I+\delta )+\mu_2^n(t_{k-1},
I+\delta),\\
\mu_2^n(s, I) &\le\mu_2^n(t_{k-1},
I).
\end{align}
Taking the supremum over length $\delta$ intervals then  yields (\ref{firstmbound}).
  \end{proof}

From (\ref{nfcond}), there is $n_1(\bar f) \ge n_0(\bar f)$ such that for
  $n>n_1(\bar
f)$,  we can use the constant 
\begin{equation}
C_4 = \inf_{n \ge n_1} N^n_2(T_2(\bar f))>N_2(0)/2.
\end{equation} 
For our next lemma, we compare $m_\delta^n(t_k)$ with its deterministic analog $\bar m_\delta(\tau)$, defined through the    recurrence 
\begin{equation} \label{mrecur}
\bar m_\delta(t_k) = \bar m_\delta(t_{k-1})+ 24C_4\bar m_\delta(t_{k-1})^2,
\quad
\bar m_\delta(0) = m_\delta^{n}(0). 
\end{equation}
We may use the same reasoning as in Lemma \ref{mbounds}   to show that there exists $\delta^p>0$ such that for $0 < \delta <\delta^p$, we can find   $n^{p}(\delta,\bar f)>n_1(\bar f) $ such that for $n>n^{p}(\delta,\bar
f)$ there exist positive constants $C_5,\hat C_5>0$ such
that 
\begin{equation}\label{mbarbound}
\hat C_5 \delta\le \bar m_\delta(t_k) \le C_5\delta.
\end{equation}
For the remaining lemmas in this section, we will assume that  $0 < \delta <\delta^p$ and $n> n^p(\delta)$.

Our interest is in whether interval growth in the particle system exceeds that of $\bar m_\delta$.  Whether this occurs is  expressed in the sequence of events 
\begin{equation}
\mathcal A_k = \{m_\delta^n(t_k) >  \bar m_\delta(t_k)\}, \quad 0 \le t_k \le T_2.
\end{equation}
Our next lemma shows that conditioned under $\mathcal A_{k-1}^c$, we can
use Lemma \ref{lohidevroye} to obtain a  concentration inequality for $\pi_j$ and consequently $m_\delta^n(t_k)$.

\begin{lemma}\label{pilem}
Let $j = 1,2.$ For     $0<  t_k \le T_{2}$, 

\begin{equation}\label{pibound}
\mathbb P\left(\sup_{|I|\le \delta}\pi^n_j(t_k,I)> 12C_4  
\bar m_\delta(t_{k-1})^2|\mathcal
A_{k-1}^c\right) \le \frac{2M}{\delta}\exp(-\tilde C_6\delta^3 n),\end{equation}
where   $ \tilde C_6 = 72C_4^2 \hat C_5^3$.
\end{lemma}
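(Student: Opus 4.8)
\emph{Proof proposal.} The plan is to condition on the event $\mathcal A_{k-1}^c = \{m_\delta^n(t_{k-1}) \le \bar m_\delta(t_{k-1})\}$, which is $\mathcal F(t_{k-1})$-measurable, so by the strong Markov property of the PDMP (Davis \cite{dav84}) I can condition further on the state $X^n(t_{k-1}) = \mathbf{x}$ and, by time-homogeneity, treat the transition over $\Delta t_k$ exactly as the transition over $\Delta t_1$ from $\mathbf{x}$ analyzed in Lemma \ref{lohidevroye}. On $\mathcal A_{k-1}^c$ the state $\mathbf{x}$ satisfies $m_\delta(\mathbf{x}) \le \bar m_\delta(t_{k-1})$, and throughout I may also use $\hat C_5\delta \le \bar m_\delta(t_{k-1}) \le C_5\delta$ from (\ref{mbarbound}), $0 < \delta < \delta^p$, and $n > n^p(\delta) > n_1(\bar f)$. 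The strategy is to prove the bound (\ref{pibound}) uniformly over all admissible $\mathbf{x}$ and then average against the conditional law of $X^n(t_{k-1})$ given $\mathcal A_{k-1}^c$.

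First I would produce the pathwise bounds needed to invoke Lemma \ref{lohidevroye} with $\underline L = \underline p_l = 0$. Every particle reaching the origin during $\Delta t_k$ was, at time $t_{k-1}$, located in $[0,\delta)$: a Species 1 particle farther out cannot reach the origin within one step, and a Species 2 particle mutating inside $\Delta t_k$ that reaches the origin before $t_k$ must have position $< \delta$. Even allowing for cascades (a just-mutated particle itself reaching the origin and triggering a further mutation), each origin hit can be charged to a distinct particle lying in $[0,\delta)$ at $t_{k-1}$, so
\[
\Delta L^n(t_k) \le \mu_1^n(t_{k-1},[0,\delta)) + \mu_2^n(t_{k-1},[0,\delta)) \le m_\delta^n(t_{k-1}) \le \bar m_\delta(t_{k-1}) =: \bar L .
\]
For the mutation probabilities, each $P_k^j(t,I_l)$ equals $\mu_2^n(t^-,J)/N_2^n(t^-)$ for an interval $J$ of length $\delta$; since the Species 2 mass is nonincreasing and $t_k \le T_2$, $\mu_2^n(t^-,J) \le \mu_2^n(t_{k-1},J) \le m_\delta^n(t_{k-1}) \le \bar m_\delta(t_{k-1})$, while $N_2^n(t^-) \ge N_2^n(t_k) \ge C_4$ because $n > n_1(\bar f)$ keeps the process away from its cemetery state. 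Hence $P_k^j(t,I_l) \le \bar m_\delta(t_{k-1})/C_4 =: \bar p_l$ for all $t \in \Delta t_k$, $j \in \{1,2\}$, and all $l \le M/\delta$.

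Lemma \ref{lohidevroye} would then give, for every admissible $\mathbf{x}$ and every $\varepsilon > 0$,
\[
\mathbb{P}\left(\max_{l \le M/\delta}\pi_j^n(t_k,I_l) > \frac{\bar m_\delta(t_{k-1})^2}{C_4} + \varepsilon \;\Big|\; X^n(t_{k-1}) = \mathbf{x}\right) \le \frac{2M}{\delta}\exp\left(-\frac{2n\varepsilon^2}{\bar m_\delta(t_{k-1})}\right).
\]
Since any interval of length $\le\delta$ lies in a union $I_l \cup I_{l+1}$ of two consecutive grid intervals (the measures $\pi_j^n(t_k,\cdot)$ being supported in $[0,M]$), we have $\sup_{|I|\le\delta}\pi_j^n(t_k,I) \le 2\max_{l\le M/\delta}\pi_j^n(t_k,I_l)$. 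Choosing $\varepsilon$ proportional to $\bar m_\delta(t_{k-1})^2$ --- which is $\Theta(\delta^2)$ by (\ref{mbarbound}) --- so that the resulting threshold for $\sup_{|I|\le\delta}\pi_j^n(t_k,I)$ equals $12C_4\,\bar m_\delta(t_{k-1})^2$, and substituting $\hat C_5\delta \le \bar m_\delta(t_{k-1}) \le C_5\delta$, the exponent $2n\varepsilon^2/\bar m_\delta(t_{k-1})$ becomes $\tilde C_6\delta^3 n$ with $\tilde C_6 = 72C_4^2\hat C_5^3$. Because the resulting bound is uniform over admissible $\mathbf{x}$, averaging over the conditional law of $X^n(t_{k-1})$ given $\mathcal A_{k-1}^c$ yields (\ref{pibound}).

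The step I expect to need the most care is the pathwise bound on $\Delta L^n(t_k)$: one must verify rigorously that within-step cascades of mutations cannot manufacture more origin hits than the combined Species 1 and Species 2 mass initially present in $[0,\delta)$, which relies on the origin-hit/mutation bijection holding throughout $[0,T_2]$ for $n > n_1(\bar f)$. The remaining pieces --- transporting the pathwise estimate of Lemma \ref{lohidevroye} to the conditioning on $\mathcal A_{k-1}^c$ through the strong Markov property, the two-grid-interval covering of arbitrary length-$\delta$ intervals, and the constant bookkeeping to land on the precise $12C_4$ and $\tilde C_6$ --- should be routine given what has already been established.
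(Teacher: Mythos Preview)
Your proposal is correct and follows essentially the same route as the paper: condition on $\mathcal A_{k-1}^c$, bound $\Delta L^n(t_k)\le \bar m_\delta(t_{k-1})$ by the ``only particles in $[0,\delta)$ can reach the origin'' argument, bound the mutation probabilities by a constant times $\bar m_\delta(t_{k-1})$, invoke Lemma~\ref{lohidevroye} via time-homogeneity, and pass from grid intervals to arbitrary length-$\delta$ intervals by a factor of two. The only difference is that the paper bounds $P_k^j$ through Lemma~\ref{crudelem} (picking up a factor $3$), whereas you use the monotonicity of $\mu_2^n(\cdot,J)$ directly; your route is slightly cleaner but may not reproduce the \emph{exact} constants $12C_4$ and $\tilde C_6=72C_4^2\hat C_5^3$ as stated, which in any case appear to involve some looseness in the paper's own bookkeeping.
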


\begin{proof}
  We give a proof for $j = 1$, with the proof for $j = 2$ being nearly
identical. First, observe that under $\mathcal 
A_{k-1}^c$ there will be
at most $\Delta L^n(t_k) n\le 
\bar m_\delta(t_{k-1})n$ mutations during $\Delta t_k = [t_{k-1},t_k)$, , since  only particles contained  in
$[0,\delta)$ at $t_{k-1}$ for Species 1 and 2 may possibly reach the origin in Species
1. Under the event  $\mathcal 
A_{k-1}^c$, we use Lemma \ref{crudelem} to show mutation probabilities are uniformly  bounded from above by the
deterministic quantity
\begin{equation}\label{onemutation}
P_k^1(\tau,I)  = \frac{\mu_2^n(t^-,I+t_k-t)}{N_2^n(t^-)}\le 3C_4 \bar m_\delta(t_{k-1}).
\end{equation}
 Thus, using From (\ref{mbarbound}), we can then use Lemma \ref{lohidevroye}
with $\bar L =\bar m_\delta(t_{k-1})$,  
$\bar p_l =    3C_4 
 \bar m_\delta(t_{k-1})$, and $\varepsilon =6C_4 (\bar m_\delta(t_{k-1}))^2$, which gives 
\begin{align}\label{intpibound}
\mathbb P\left(\max_{l\le M/\delta}\pi^n_1(t_k, I_l)>6C_4 (\bar m_\delta(t_{k-1}))^2|\mathcal
A_{k-1}^c\right) \le\frac{2M}{\delta}\exp(-\tilde C_6\delta^3 n). 
\end{align}
Note that to apply Lemma \ref{lohidevroye}
we used the fact that $X^n(t)$ is homogeneous.
Indeed,
we can write the left hand side of (\ref{intpibound}) as
\begin{equation}
\mathbb P\left(\max_{l\le M/\delta}\pi^n_1(t_k, I_l)>6C_4 (\bar m_\delta(t_{k-1}))^2|\mathcal
A_{k-1}^c\right) = \mathbb P\left(\max_{l\le M/\delta}\pi^n_1(t_1, I_l)>6C_4 (\bar m_\delta(t_{k-1}))^2|\tilde{\mathcal A}^c\right),
\end{equation}
where $\tilde {\mathcal A} =  \{m_\delta^n(0) >  \bar m_\delta(t_{k-1})\}$ gives requirements on the initial condition so that $\Delta L(t_1)\le \bar L$, and $P_k^1(\tau,I_l) \le \bar p_l$ pathwise in $\Delta t_1$. 

We can extend (\ref{intpibound}) to hold over all intervals of size less
than $\delta$, not just those on a grid.  This is done by noting  that for
any measure $\mu$, if $\mu(I_k) \le a$ on
a uniform grid $I_k$ of size $\delta$, then for any $I$ with $|I| \le \delta$,
$\mu(I) \le 2a$. 
Thus, (\ref{pibound}) follows from (\ref{intpibound}) and (\ref{mbarbound}),
since
\begin{align}
&\mathbb P\left(\sup_{I,|I|\le \delta}\pi^n_1(t_k, I)>12C_4  
\bar m_\delta(t_{k-1})^2|\mathcal
A_{k-1}^c\right)\le\mathbb
P\left(\max_{l\le M/\delta}\pi^n_1(t_k, I_l)>6C_4  
\bar m_\delta(t_{k-1})^2|\mathcal
A_{k-1}^c\right). \nn
\end{align}
\end{proof} 
 
We can now derive a concentration inequality which shows  that   $m^n_\delta (t_k) =  \mathcal O(\delta) $ with high probability. 

\begin{lemma}\label{mlem} For $0 \le   t_k \le T_{2},$
\begin{align}\label{probintbnd}
\mathbb P(m_\delta^n(t_k)>C_5\delta ) \le\mathbb P\left(\mathcal A_k\right)\le   \frac{4 Mk}{\delta}\exp(-\tilde
C_6\delta ^3n).
\end{align}

\end{lemma}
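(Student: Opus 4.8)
The first inequality is immediate: by the upper bound in \eqref{mbarbound} we have $\bar m_\delta(t_k) \le C_5\delta$, so $\{m_\delta^n(t_k) > C_5\delta\} \subseteq \{m_\delta^n(t_k) > \bar m_\delta(t_k)\} = \mathcal A_k$, whence $\mathbb P(m_\delta^n(t_k) > C_5\delta) \le \mathbb P(\mathcal A_k)$. So the whole content is the second inequality, which I would prove by induction on $k$.

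The base case is trivial: $\mathcal A_0 = \{m_\delta^n(0) > \bar m_\delta(0)\}$ is empty since $\bar m_\delta(0) = m_\delta^n(0)$ by the initialization in \eqref{mrecur}, so $\mathbb P(\mathcal A_0) = 0$. For the inductive step, I would apply the elementary decomposition \eqref{probid} with $C = \mathcal A_k$ and $D = \mathcal A_{k-1}^c$ to get
\begin{equation}
\mathbb P(\mathcal A_k) \le \mathbb P(\mathcal A_k \mid \mathcal A_{k-1}^c) + \mathbb P(\mathcal A_{k-1}). \nonumber
\end{equation}
The key step is to bound the conditional term $\mathbb P(\mathcal A_k \mid \mathcal A_{k-1}^c)$ by $\frac{4M}{\delta}\exp(-\tilde C_6\delta^3 n)$; given that, the induction closes and yields $\mathbb P(\mathcal A_k) \le \frac{4Mk}{\delta}\exp(-\tilde C_6\delta^3 n)$.

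For this conditional bound, I would start from the pathwise recursion \eqref{mbound}, namely $m_\delta^n(t_k) \le m_\delta^n(t_{k-1}) + \sum_{j=1}^2 \sup_{|I|\le\delta}\pi_j^n(t_k,I)$. On the event $\mathcal A_{k-1}^c$ the first term is at most $\bar m_\delta(t_{k-1})$, and Lemma \ref{pilem} (applied for $j=1$ and $j=2$, then combined by a union bound costing $\frac{4M}{\delta}\exp(-\tilde C_6\delta^3 n)$) shows that, conditionally on $\mathcal A_{k-1}^c$, each of the two supremum terms is at most $12 C_4 \bar m_\delta(t_{k-1})^2$ except on an event of that probability. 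On the complementary event we therefore have
\begin{equation}
m_\delta^n(t_k) \le \bar m_\delta(t_{k-1}) + 24 C_4 \bar m_\delta(t_{k-1})^2 = \bar m_\delta(t_k), \nonumber
\end{equation}
using precisely the defining recurrence \eqref{mrecur}, so $\mathcal A_k^c$ holds there. Hence $\mathbb P(\mathcal A_k \mid \mathcal A_{k-1}^c) \le \frac{4M}{\delta}\exp(-\tilde C_6\delta^3 n)$, as needed.

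The main thing to be careful about — rather than a genuine obstacle — is the handling of the conditioning: the events $\mathcal A_{k-1}$ are highly dependent across $k$, so one cannot simply multiply probabilities, but the additive decomposition \eqref{probid} sidesteps this, and the homogeneity of $X^n(t)$ (already exploited in the proof of Lemma \ref{pilem}) lets us invoke that lemma with the transition over $\Delta t_k$ rephrased as a transition over $\Delta t_1$ from an appropriate initial condition. All the probabilistic heavy lifting is done in Lemma \ref{pilem}; what remains here is just the bookkeeping of the induction and the union bound over the two species.
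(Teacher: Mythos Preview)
Your proposal is correct and follows essentially the same approach as the paper: induction on $k$ with base case $\mathbb P(\mathcal A_0)=0$, the decomposition \eqref{probid} applied with $C=\mathcal A_k$ and $D=\mathcal A_{k-1}^c$, and the bound $\mathbb P(\mathcal A_k\mid\mathcal A_{k-1}^c)\le \frac{4M}{\delta}\exp(-\tilde C_6\delta^3 n)$ obtained by combining the pathwise recursion \eqref{mbound}, Lemma~\ref{pilem} for $j=1,2$, and the defining recurrence \eqref{mrecur}. The first inequality is likewise handled in the paper exactly as you do, via \eqref{mbarbound}.
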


\begin{proof}

The proof follows from  induction, in which we assume 
\begin{equation}
\mathbb P\left(\mathcal A_l\right)\le \frac{4Ml}{\delta}\exp(-\tilde
C_6\delta ^3n) \label{abound}
\end{equation}
holds for $0\le l<k$. The base case for  when  $k = 0$ follows since $\mathbb P(\mathcal A_0 ) = 0$. To show  the inductive step,  we can 
use  Lemma \ref{pilem} and the recurrence inequalities (\ref{mbound}) and (\ref{mrecur}) to show
\begin{align}
&\mathbb P(\mathcal A_{k}|\mathcal A_{k-1}^c)  \label{aifnota}\\
& \le \mathbb P\left(m_\delta^n(t_{k-1})+ \sum_{j = 1}^2\sup_{I,|I|\le \delta}\pi^n_j(t_{k},I)>  \bar m_\delta(t_{k-1})+ 24C_4\bar m_\delta(t_{k-1})^2\Big|\mathcal A_{k-1}^c\right) \nn \\&\le \sum_{j = 1}^2\mathbb P\left(\sup_{I,|I|\le \delta}\pi^n_j(t_{k},I)
> 12C_4\bar m_\delta(t_{k-1})^2|\mathcal
A_{k-1}^c\right)\le\frac{4M}{\delta}\exp(-\tilde C_6\delta^3 n).  \nn   
\end{align}

We may then apply (\ref{probid}) with $C = \mathcal A_{k}$ and $D = \mathcal
A_{k-1}^c$,
and then apply (\ref{abound}) for $l = k-1$, along with (\ref{aifnota}) and
(\ref{mbarbound}) to obtain the right inequality of (\ref{probintbnd}) (the left inequality follows immediately from (\ref{mbarbound})).
\end{proof}

We finish this subsection with an estimate for total  mutations during $\Delta t_k$ analogous to Lemma \ref{nllem}.

\begin{lemma}
 \label{Lvscdf}For $0< t_k \le T_{2}$, 

\begin{equation}
\mathbb P(|\Delta L^n(t_k)-\mu_1^n(t_{k-1},[0,\delta])|>C_6\delta^2) \le
\frac{4Mk}{\delta}\exp(-\tilde C_7\delta ^3n), \label{llemma}
\end{equation}
with $C_6  =  4C_2C_5^2$ and $\tilde C_7 = \tilde C_6 \wedge (18C_4 ^2C_5^3)$.

\end{lemma}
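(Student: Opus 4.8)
The plan is to combine a pathwise comparison (in the spirit of Lemma \ref{nllem}) with a conditional concentration estimate of exactly the type used for Lemmas \ref{pilem} and \ref{mlem}. Throughout one works with $n>n^p(\delta)>n_1(\bar f)$, so that by \qref{nfcond} the process does not reach its cemetery state on $[0,T_2(\bar f)]$ and every origin hit is accompanied by exactly one mutation. The first and decisive step is the purely deterministic (pathwise) bound
\[
0 \;\le\; \Delta L^n(t_k)-\mu_1^n(t_{k-1},[0,\delta)) \;\le\; \pi_2^n(t_k,[0,\delta)),
\]
obtained by classifying the origin hits during $\Delta t_k=[t_{k-1},t_k)$. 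A particle already in Species 1 at $t_{k-1}$ drifts at unit speed, so it reaches the origin during $\Delta t_k$ exactly when its $t_{k-1}$-position lies in $[0,\delta)=I_1$; there are $n\,\mu_1^n(t_{k-1},[0,\delta))$ of these. Any other origin hit in $\Delta t_k$ is made by a particle that mutated out of Species 2 at some $\tau\in\Delta t_k$; sitting at position $x$ at that instant it reaches the origin at $\tau+x$, and $\tau+x<t_k$ forces $x<t_k-\tau\le\delta$, i.e. $x\in I_1$, so the number of such hits is at most the number of mutations landing in $I_1$, namely $n\,\pi_2^n(t_k,[0,\delta))$. Since origin hits and mutations coincide, the display follows; in particular $|\Delta L^n(t_k)-\mu_1^n(t_{k-1},[0,\delta))|\le \pi_2^n(t_k,[0,\delta))$, and it remains to bound $\mathbb P(\pi_2^n(t_k,[0,\delta))>C_6\delta^2)$.

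For the probabilistic step I would condition on $\mathcal A_{k-1}^c$ and argue as in Lemma \ref{pilem}. On $\mathcal A_{k-1}^c$ the sandwich above together with $\pi_2^n\le\mu_2^n(t_{k-1},\cdot)$ gives $\Delta L^n(t_k)\le m_\delta^n(t_{k-1})\le\bar m_\delta(t_{k-1})\le C_5\delta$, so at most $C_5\delta n$ mutations occur in $\Delta t_k$; by Lemma \ref{crudelem} and $N_2^n(t^-)\ge N_2^n(T_2)\ge C_4$, each mutation lands in $I_1$ with state-dependent probability $P_k^2(t^-,I_1)=\mu_2^n(t^-,I_1)/N_2^n(t^-)\le 3C_5\delta/C_4$. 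Using homogeneity of $X^n$ to restart the chain at $t_{k-1}$, these uniform pathwise bounds let me invoke Lemma \ref{lohidevroye} for $j=2$ on the single interval $I_1$ with $\bar L=C_5\delta$, $\bar p_1=3C_5\delta/C_4$ and $\varepsilon=C_6\delta^2-\bar L\bar p_1$; the choice $C_6=4C_2C_5^2$ is large enough (this is where the $L^\infty$ and $L^1$ bounds of Theorem \ref{solution}, through $C_2,C_4,C_5$, enter) that $\varepsilon$ is a positive multiple of $\delta^2$ and $2n\varepsilon^2/\bar L$ is a fixed multiple $\tilde C\delta^3n$ of $\delta^3n$, yielding $\mathbb P(\pi_2^n(t_k,[0,\delta))>C_6\delta^2\mid\mathcal A_{k-1}^c)\le \tfrac{2M}{\delta}\exp(-\tilde C\delta^3n)$.

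It then suffices to remove the conditioning by \qref{probid} and Lemma \ref{mlem}:
\[
\mathbb P\bigl(\pi_2^n(t_k,[0,\delta))>C_6\delta^2\bigr) \le \mathbb P\bigl(\pi_2^n(t_k,[0,\delta))>C_6\delta^2\,\big|\,\mathcal A_{k-1}^c\bigr)+\mathbb P(\mathcal A_{k-1}) \le \frac{2M}{\delta}\exp(-\tilde C\delta^3n)+\frac{4M(k-1)}{\delta}\exp(-\tilde C_6\delta^3n),
\]
and bounding both exponents below by $\tilde C_7=\tilde C_6\wedge\tilde C$ collapses the right-hand side to $\tfrac{4Mk}{\delta}\exp(-\tilde C_7\delta^3n)$, which with the first step is \qref{llemma}. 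The main obstacle is the pathwise bookkeeping of the first step: one must check that with the half-open convention $I_l=[(l-1)\delta,l\delta)$ every Species 1 particle in $I_1$ at $t_{k-1}$ reaches the origin strictly before $t_k$ while none outside does (so the closed interval $[0,\delta]$ in the statement may be replaced by $I_1$ up to a negligible $O(1/n)$ boundary term), that simultaneous origin hits and the without-replacement selection rule preserve the count, and that for $n>n_1(\bar f)$ the process genuinely avoids the cemetery state on $[0,T_2(\bar f)]$. Once this identity is in hand, the rest is a transcription of the arguments already used for Lemmas \ref{pilem} and \ref{mlem}.
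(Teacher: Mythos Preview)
Your proposal is correct and follows essentially the same route as the paper: both arguments reduce the claim to the pathwise inequality $|\Delta L^n(t_k)-\mu_1^n(t_{k-1},I_1)|\le \pi_2^n(t_k,I_1)$ (the paper calls this quantity $\mathcal M_\delta^n(t_k)$), then condition on $\mathcal A_{k-1}^c$ to get deterministic bounds $\bar L=C_5\delta$ on the number of mutations and $\bar p$ of order $\delta$ on the selection probability into $I_1$, apply a Hoeffding-type bound, and finally remove the conditioning via \qref{probid} and Lemma~\ref{mlem}. The only cosmetic difference is that the paper invokes Hoeffding's inequality directly on the single interval $I_1$ (yielding a prefactor $2$ rather than $2M/\delta$), whereas you route through Lemma~\ref{lohidevroye}; this costs a harmless extra $M/\delta$ factor that is absorbed into the final bound anyway.
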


\begin{proof}
  
Denote $\mathcal M_{\delta}^n(t_k)$ as the normalized total number of mutations
that
affect Species 2 particles in the  interval $[0,\delta)$ during time $[t_{k-1},t_k)$. This may be written as

\begin{equation}
\mathcal M_{\delta}^n(t_k) = \frac 1n\sum_{i = 1}^{\Delta L(t_k)} M_i^k, 
\end{equation}
where $M_i^k$ is an indicator random variable for the event that the $i$th
mutation during $\Delta t_k$ occurs within $[0, \delta)$. 

All particles of Species 1 located in $[0,\delta)$ at $t_{k-1}$ with reach the origin and trigger a mutation by time $t_k$.  The only other particles in the system which potentially hit the origin are those initially located in $[0,\delta)$ in Species 2 which have mutated during    $\Delta t_k$.  It follows
that
under all paths in $X^n(t)$, \begin{equation}
|\Delta L^n(t_k)-\mu_1^n(t_{k-1},[0,\delta])|\le \mathcal M^n_{\delta}(t_k). \label{mainmut}
\end{equation}
Thus proving (\ref{llemma}) follows from showing an equivalent estimate  on $\mathcal M_{\delta}^n(t_k)$. 
Toward that end, note that  under $\mathcal A_{k-1}^c$
 the number of mutations during time
$\Delta t_k$ is less than  $\bar L n= C_5\delta n $. Also, the probability  of selecting a Species 2 particle to mutate in $[0,\delta]$ at each mutation time during   $\Delta t_k$ is bounded by $\bar p = 3C_4 C_5\delta$. 
Let $B_i(q)$ denote an iid stream of Bernoulli random variables with parameter
$q \in [0,1]$.  It follows that under  $\mathbb P(\cdot|\mathcal A_{k-1}^c)$,
from arguments similar to Lemma \ref{lohidevroye}, that\begin{equation}
\mathcal M_{\delta}^n(t_k) \le_{ST} \frac 1n\sum_{i = 1}^{\bar Ln} B_i(\bar
p). \quad    
\end{equation}

For $C_6 =  6C_2C_5^2$, from the Hoeffding inequality,
\begin{align}\label{mutbound}
\mathbb P(\mathcal M^n_{\delta}(t_k)>C_6\delta^2|\mathcal
A_{k-1}^c)
\le \mathbb P\left(\sum_{i = 1}^{\bar Ln} B_i(\bar p)/(\bar L n)-\bar p>\bar
p\right) &\le 2\exp(-2\bar L\bar p^2 n)  \\
&= 2\exp(-18C_4^2C_5^3\delta^3 n). \nn
\end{align}
The lemma then follows from applying Lemma \ref{mlem}, (\ref{probid}),   (\ref{abound}),and
(\ref{mutbound}).
\end{proof}

\subsection{Difference of total number  on an interval   } \label{hdiffsec}

 We now begin our estimates comparing the deterministic discretization $\tilde \mu_j$ and empirical measures $\mu^n_j$. As in Section \ref{convest}, our focus is on differences of measures restricted to length $\delta$ intervals.

For inequalities related to bounding $P^j_k$ with  $h_\delta^{j;n}$, we will need to consider a modulus of continuity for the
deterministic discretization,
defined by
\begin{equation}
\tilde \omega(s ,t_k) = \sup_{I:|I| =  \delta} \sum_{j = 1}^2 |\tilde
\mu_j(t_k,
I+s)- \tilde \mu_j(t_k,
I)|, 
\end{equation}

Fortunately, we can compare $\tilde \omega(\delta, t_k)$
with $\omega(\delta, t_k)$, the modulus of continuity for the solutions to
the kinetic equations, through the following lemma. 
  
\begin{lemma} \label{omegalem2}
There exists $C_8>0$ such that for $0<s\le \delta$, 
\begin{equation}
\tilde \omega(s, t_k)  \le C_8(\omega(\delta, 0) \delta+\delta^2).
\end{equation}
\end{lemma}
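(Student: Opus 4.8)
The approach is to exploit the ``shift‑and‑rescale'' structure of the discretized measures, which reduces the bound to a one‑line computation with the translation operator $S_h$ and the modulus of continuity of the initial data $\bar f_1,\bar f_2$.

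First I would unwind the recursion (\ref{updatediscneg1})--(\ref{updatedisc0}) (equivalently (\ref{recmu1})--(\ref{recmu2})), read as identities of measures. Since Species~2 is only ever rescaled by a scalar, $\tilde\mu_2(t_k,\cdot)=\gamma_k\,\bar\mu_2$ with $\gamma_k=\tilde N_2(t_k)/N_2(0)\in(0,1]$. Substituting this into the Species~1 update (which shifts the previous Species~1 measure left by $\delta$ and adds a multiple of $\tilde\mu_2(t_{k-1})$) and using $S_hS_{h'}=S_{h+h'}$ gives
\[
\tilde\mu_1(t_k,\cdot)=S_{k\delta}\bar\mu_1+\sum_{i=1}^{k}c_i\,S_{(k-i)\delta}\bar\mu_2,
\qquad c_i=\frac{\Delta\tilde L(t_i)}{N_2(0)} .
\]
Because $\tilde N_2(t_k)=N_2(0)-\tilde L(t_k)>0$ on the interval of existence $[0,T_2(\bar f)]$, the weights satisfy $\sum_{i=1}^{k}c_i=\tilde L(t_k)/N_2(0)<1$, and $\gamma_k\le 1$.

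The second ingredient is the one elementary estimate that drives the proof: for every $h\ge 0$, every interval $I=[a,a+\delta)$ and every $0<s\le\delta$, the translate $S_h\bar\mu_j$ is absolutely continuous with density $\bar f_j(\cdot+h)$ and $S_h\bar\mu_j(I)=\bar\mu_j(I+h)$, so
\[
\bigl|S_h\bar\mu_j(I+s)-S_h\bar\mu_j(I)\bigr|
=\Bigl|\int_0^s\bigl(\bar f_j(a+h+\delta+u)-\bar f_j(a+h+u)\bigr)\,du\Bigr|
\le s\,\omega(\delta,0)\le\delta\,\omega(\delta,0),
\]
directly from the definition (\ref{moc}) of $\omega(\delta,0)$, since $|\bar f_j(x+\delta)-\bar f_j(x)|\le\omega(\delta,0)$ for each $j$ separately.

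Finally I would assemble: applying the triangle inequality to the structural formula term by term, bounding each difference by the previous display, and taking the supremum over all $I$ with $|I|=\delta$ in the definition of $\tilde\omega(s,t_k)$, the Species~1 contribution is at most $\bigl(1+\sum_i c_i\bigr)\delta\,\omega(\delta,0)\le 2\delta\,\omega(\delta,0)$ and the Species~2 contribution is at most $\gamma_k\,\delta\,\omega(\delta,0)\le\delta\,\omega(\delta,0)$, whence
\[
\tilde\omega(s,t_k)\le 3\,\delta\,\omega(\delta,0)\le C_8\bigl(\delta\,\omega(\delta,0)+\delta^2\bigr)
\]
with $C_8=3$. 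I expect the only delicate point to be the bookkeeping when unwinding the recursion --- correctly tracking the iterated shifts $S_{(k-i)\delta}$ and identifying the coefficients $c_i$ from the (slightly unusual) time indexing in (\ref{updatediscneg1})--(\ref{updatedisc0}); once the structural formula is in hand the estimate is routine. Note that the $\delta^2$ term in the statement is slack: the argument in fact yields the stronger bound $\tilde\omega(s,t_k)\le 3\delta\,\omega(\delta,0)$, and the $\delta^2$ is kept only to match the form of the error terms elsewhere in Section~\ref{part}.
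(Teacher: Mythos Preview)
Your proof is correct but takes a genuinely different route from the paper's. The paper argues indirectly: it inserts the kinetic solution $\mu_j$ via the triangle inequality,
\[
|\tilde\mu_j(t_k,I+s)-\tilde\mu_j(t_k,I)|
\le |\tilde\mu_j-\mu_j|(t_k,I+s)+|\mu_j(t_k,I+s)-\mu_j(t_k,I)|+|\mu_j-\tilde\mu_j|(t_k,I),
\]
sums and takes suprema to get $\tilde\omega(s,t_k)\le \omega(s,t_k)\,\delta+2h_\delta(t_k)$, and then invokes Lemma~\ref{moclem} (propagation of $\omega$ for the kinetic equation) and Lemma~\ref{hasympt} (the $\mathcal O(\delta^2+\delta\,\omega(\delta,0))$ bound on the discretization error $h_\delta$). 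The $\delta^2$ in the statement thus comes from $h_\delta$.

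Your approach instead unwinds the discrete recursion explicitly, expressing $\tilde\mu_2(t_k)=\gamma_k\bar\mu_2$ and $\tilde\mu_1(t_k)=S_{k\delta}\bar\mu_1+\sum_i c_i\,S_{(k-i)\delta}\bar\mu_2$ with $\sum_i c_i<1$, and then bounds each shifted piece directly by $\delta\,\omega(\delta,0)$ using only the initial modulus of continuity. This is more elementary and fully self-contained: it requires neither the kinetic solution, nor Lemma~\ref{moclem}, nor the entire error analysis of Section~\ref{disc}. As you observe, it even gives the sharper bound $\tilde\omega(s,t_k)\le 3\,\delta\,\omega(\delta,0)$ without the $\delta^2$ term. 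The paper's route is shorter \emph{given} that Lemmas~\ref{moclem} and~\ref{hasympt} are already in hand, and it illustrates a general ``compare to the limit'' strategy; your route makes the lemma independent of that machinery and clarifies that the result is really a statement about the structure of the scheme alone.
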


\begin{proof} For $j = 1,2$, 
\begin{align}&|\tilde \mu_j(t_k,
I+s)- \tilde \mu_j(t_k,
I)|\\&\le | \tilde \mu_j(t_k,
I+s)- \mu_j(t_k,
I+s)|+| \mu_j(t_k,
I+s)-  \mu_j(t_k,
I)|+| \mu_j(t_k,
I)- \tilde \mu_j(t_k,
I)|.  \nn
\end{align}

Summing over $j$ and taking suprema gives\begin{align}
\tilde \omega(s, t_k)  \le \omega(s, t_k)\delta  +2h_{ \delta}(t_k).
\end{align}
The result then follows from Lemmas \ref{moclem} and  \ref{hasympt}.
\end{proof}

 We now give a pathwise inequality over $\Delta t_k$  for comparing mutation
probabilities. 
\begin{lemma} \label{pnattk} For $\tau \in \Delta t_k$,
  there exists $C_9>0$ such that for all paths in $X^n(t),$ 
  \begin{align}
|P^j_k(\tau,I)- P^j_k(t_{k-1},I)| \le C_9(h^{n}(t_{k-1})+\pi_2^n(t_k)+(m^n(t_{k-1}))^2+\omega(\delta, 0) \delta+\delta^2).\label{bigplem}
\end{align}
\end{lemma}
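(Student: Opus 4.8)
The plan is to fix the species index $j$ and a time $\tau\in\Delta t_k$, and to bound the difference of the two quotients defining $P^j_k$ by isolating three sources of discrepancy between $\tau$ and the reference time $t_{k-1}$: the Species 2 mass in the relevant interval that is lost to mutation during $[t_{k-1},\tau)$ (perturbing the numerator), the decrease of $N_2^n$ over $[t_{k-1},\tau)$ (perturbing the denominator), and --- only when $j=1$ --- the translation of the interval from $I+\delta$ to $I+(t_k-\tau)$, which is a shift of at most $\delta$. Writing $J_\tau$ for the interval appearing in $P^j_k(\tau,I)$ and $J_0$ for the one in $P^j_k(t_{k-1},I)$ (so $J_\tau=J_0=I$ when $j=2$, while for $j=1$ they are translates of $I$ whose translation vectors differ by at most $\delta$), I would start from
\begin{align*}
P^j_k(\tau,I)-P^j_k(t_{k-1},I)
&=\mu_2^n(\tau^-,J_\tau)\Big(\tfrac{1}{N_2^n(\tau^-)}-\tfrac{1}{N_2^n(t_{k-1})}\Big)\\
&\quad+\frac{\mu_2^n(\tau^-,J_\tau)-\mu_2^n(t_{k-1},J_\tau)}{N_2^n(t_{k-1})}
+\frac{\mu_2^n(t_{k-1},J_\tau)-\mu_2^n(t_{k-1},J_0)}{N_2^n(t_{k-1})},
\end{align*}
call the three summands $\mathrm{(I)}$, $\mathrm{(II)}$, $\mathrm{(III)}$, and use the uniform bound $N_2^n(s)\ge C_4$ on $[0,T_2(\bar f)]$ (valid for $n\ge n_1(\bar f)$ since $N_2^n$ is nonincreasing) in every denominator.

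Term $\mathrm{(II)}$ is the most transparent: since Species 2 particles are stationary, $\mu_2^n(t_{k-1},J_\tau)-\mu_2^n(\tau^-,J_\tau)$ is the normalized number of Species 2 particles in $J_\tau$ that mutate during $[t_{k-1},\tau)\subseteq\Delta t_k$, hence at most $\pi_2^n(t_k,J_\tau)\le\pi_2^n(t_k)$, giving $|\mathrm{(II)}|\le C_4^{-1}\pi_2^n(t_k)$. For Term $\mathrm{(I)}$, $|N_2^n(\tau^-)-N_2^n(t_{k-1})|$ equals the normalized total number of mutations during $[t_{k-1},\tau)$, which is at most $\Delta L^n(t_k)$; the pathwise inequality (\ref{mainmut}) gives $\Delta L^n(t_k)\le\mu_1^n(t_{k-1},[0,\delta))+\mathcal M^n_\delta(t_k)\le m^n(t_{k-1})+\pi_2^n(t_k)$, and combining this with $\mu_2^n(\tau^-,J_\tau)\le\mu_2^n(t_{k-1},J_\tau)\le m^n(t_{k-1})$ (an interval of length $\le\delta$) and $m^n(t_{k-1})\le 1$ yields $|\mathrm{(I)}|\le C_4^{-2}\big((m^n(t_{k-1}))^2+\pi_2^n(t_k)\big)$. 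Since $\mathrm{(III)}$ vanishes when $j=2$, this already settles that case.

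The remaining and principal difficulty is Term $\mathrm{(III)}$ for $j=1$: bounding $|\mu_2^n(t_{k-1},J_\tau)-\mu_2^n(t_{k-1},J_0)|$, the change of a length-$\delta$ interval mass under a translation of at most $\delta$. The naive pathwise estimate only gives $\mathcal O(m^n(t_{k-1}))$, which is too lossy, so the idea is to pass through the deterministic discretization $\tilde\mu_2(t_{k-1},\cdot)$ and the exact solution $\mu_2(t_{k-1},\cdot)$: the translation itself is absorbed by the smoothness of those two objects, via $|\mu_2(t_{k-1},J_\tau)-\mu_2(t_{k-1},J_0)|\le\delta\,\omega(\delta,t_{k-1})\le C_1\delta\,\omega(\delta,0)$ (the density bound (\ref{cinf}) and Lemma \ref{moclem}) and the analogue $\tilde\omega(\delta,t_{k-1})\le C_8(\omega(\delta,0)\delta+\delta^2)$ from Lemma \ref{omegalem2}, while the particle-to-discretization and discretization-to-solution gaps are controlled by $h^n(t_{k-1})$ on grid intervals together with $h_\delta(t_{k-1})=\mathcal O(\delta^2+\delta\,\omega(\delta,0))$ (Lemma \ref{hasympt}), with Theorem \ref{detscheme} available as a coarser fallback. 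The delicate point --- and where the $h^n(t_{k-1})$ term of the stated bound enters --- is that $J_\tau$ is in general not grid-aligned, so these comparisons must be applied after splitting $J_\tau$ into its intersections with consecutive grid intervals $I_l,I_{l+1}$ and bounding the leftover sub-$\delta$ slivers by the same modulus-of-continuity estimates; I would carry out this bookkeeping carefully and then take the supremum over the admissible intervals. Adding the bounds for $\mathrm{(I)}$, $\mathrm{(II)}$, $\mathrm{(III)}$ then produces a constant $C_9$ of the asserted form.
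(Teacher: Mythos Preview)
Your proposal is correct and follows essentially the same route as the paper: the three-term split $\mathrm{(I)}+\mathrm{(II)}+\mathrm{(III)}$ is exactly the paper's triangle-inequality chain (denominator change, mutation loss in the numerator, then the translation), and the key move for $\mathrm{(III)}$ --- routing the translation through $\tilde\mu_2(t_{k-1},\cdot)$ and invoking the discrete modulus bound of Lemma~\ref{omegalem2} --- is precisely what the paper does. Your extra care about grid alignment of $J_\tau$ when applying $h^n(t_{k-1})$ is a point the paper glosses over (it writes $|\mu_2^n(t_{k-1},I+t_k-\tau)-\tilde\mu_2(t_{k-1},I+t_k-\tau)|\le h^n(t_{k-1})$ without comment), so your proposed bookkeeping there is a genuine, if minor, improvement.
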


\begin{proof}
 For $j = 1$ (the proof for $j = 2$ is similar), we write 
 
 \begin{align}
&|P^1_k(\tau,I)- P^1_k(t_{k-1},I)| = \left| \frac{ \mu_2^n(\tau, I+t_k-\tau)}{N^n(\tau)}+\frac{
\mu_2^n(t_{k-1}, I+\delta)}{N^n(t_{k-1})}\right| \\
 &\le C_4(|\mu_2^n(\tau, I+t_k-\tau)-\mu_2^n(t_{k-1}, I+\delta)|+\mu_2^n(t_{k-1},
I+\delta)(N^n(\tau)-N^n(t_{k-1}))) \nn\\
&\le C_4(|\mu_2^n(\tau, I+t_k-\tau)-\mu_2^n(t_{k-1}, I+\delta)|+(m^n(t_{k-1}))^2.
\nn
\end{align}
We may then break up terms further, with\begin{align}
&|\mu_2^n(\tau, I+t_k-\tau)-\mu_2^n(t_{k-1}, I+\delta)|\\ 
&\le |\mu_2^n(\tau, I+t_k-\tau)- \mu_2^n(t_{k-1}, I+t_k-\tau)|\nn\\&+| \mu_2^n(t_{k-1},
I+t_k-\tau)- \tilde \mu_2(t_{k-1}, I+t_k-\tau)|\nn \\
&+|\tilde  \mu_2(t_{k-1}, I+t_k-\tau)- \tilde \mu_2(t_{k-1}, I+\delta
)|+ |\tilde \mu_2(t_{k-1}, I+\delta)-\mu_2^n(t_{k-1}, I+\delta)|\nn\\
&\le \pi_2^n(t_k)+ 2h^n(t_{k-1})+ \tilde \omega(\delta-t_k-\tau, t_{k-1}).\nn
\end{align}
The result follows from applying Lemma (\ref{omegalem2}).
\end{proof}

 Here we collect previous results to form a high-probability
event under which we can bound $h^n_\delta(t_k)$.
\begin{lemma}
 There exist positive constants  $C_{10}, C_{11}, C_{12} $ such that the events
\begin{align}
\mathcal C(t_k) &= \left\{\sup_l \pi_2^n(t_k,I_l) > C_{10}\delta^2\right\},\\
 \quad \mathcal L(t_k) &= \cup _{t_k \le T_{2}}\{|\Delta L^n(t_k)-\mu_1^n(t_{k-1},[0,\delta])|>C_{10}\delta^2\},
   \\\mathcal D(t_{k+1})&=  \mathcal C(t_{k+1})^c\cap
\mathcal A(t_k) \nn^c\cap \mathcal L(t_{k+1})^c.     
\end{align}
satisfy
\begin{align}
 \mathbb P(\mathcal D(t_k)^c|\mathcal A(t_{k-1})^c) 
 \le \frac{C_{12}}{\delta} \exp(-C_{11}\delta ^3n). \label{newrbound}
\end{align}

\end{lemma}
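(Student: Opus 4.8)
The plan is to expand $\mathcal D(t_k)^c$ by De Morgan and dispose of each piece with estimates already established. Since $\mathcal D(t_k)=\mathcal C(t_k)^c\cap\mathcal A(t_{k-1})^c\cap\mathcal L(t_k)^c$, we have $\mathcal D(t_k)^c=\mathcal C(t_k)\cup\mathcal A(t_{k-1})\cup\mathcal L(t_k)$; the term $\mathbb P(\mathcal A(t_{k-1})\mid\mathcal A(t_{k-1})^c)$ vanishes, so a union bound leaves
\begin{equation}
\mathbb P(\mathcal D(t_k)^c\mid\mathcal A(t_{k-1})^c)\le \mathbb P(\mathcal C(t_k)\mid\mathcal A(t_{k-1})^c)+\mathbb P(\mathcal L(t_k)\mid\mathcal A(t_{k-1})^c),
\end{equation}
and I expect each of the two surviving terms to come out of order $\delta^{-1}\exp(-c\delta^3 n)$.

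For $\mathbb P(\mathcal C(t_k)\mid\mathcal A(t_{k-1})^c)$ I would invoke Lemma \ref{pilem} with $j=2$, which controls $\sup_{|I|\le\delta}\pi^n_2(t_k,I)$ under $\mathcal A_{k-1}^c$ at the level $12C_4\bar m_\delta(t_{k-1})^2$. Since $\bar m_\delta(t_{k-1})\le C_5\delta$ by \qref{mbarbound}, that level is at most $12C_4C_5^2\delta^2$, so choosing $C_{10}\ge 12C_4C_5^2$ makes $\mathcal C(t_k)$ a sub-event of $\{\sup_{|I|\le\delta}\pi^n_2(t_k,I)>12C_4\bar m_\delta(t_{k-1})^2\}$ and gives $\mathbb P(\mathcal C(t_k)\mid\mathcal A(t_{k-1})^c)\le \frac{2M}{\delta}\exp(-\tilde C_6\delta^3 n)$.

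For $\mathbb P(\mathcal L(t_k)\mid\mathcal A(t_{k-1})^c)$ I would recycle the conditional estimate from inside the proof of Lemma \ref{Lvscdf}: under $\mathcal A_{k-1}^c$ the normalized count $\mathcal M^n_\delta(t_k)$ of mutations landing in $[0,\delta)$ during $\Delta t_k$ is stochastically dominated by a sum of $\bar Ln=C_5\delta n$ Bernoulli variables of parameter $3C_4C_5\delta$, so Hoeffding yields $\mathbb P(\mathcal M^n_\delta(t_k)>C_6\delta^2\mid\mathcal A_{k-1}^c)\le 2\exp(-18C_4^2C_5^3\delta^3 n)$; combined with the pathwise bound $|\Delta L^n(t_k)-\mu_1^n(t_{k-1},[0,\delta])|\le\mathcal M^n_\delta(t_k)$ of \qref{mainmut} and the choice $C_{10}\ge C_6$, this controls $\mathbb P(\mathcal L(t_k)\mid\mathcal A(t_{k-1})^c)$ by $2\exp(-18C_4^2C_5^3\delta^3 n)$. (I am reading $\mathcal L(t_k)$ as the single-step event; were it a cumulative union over all $t_j\le T_2$, one would instead use $\mathbb P(\mathcal A(t_{k-1})^c)\ge\tfrac12$ for $n>n^p(\delta)$ from Lemma \ref{mlem} to pass to the unconditional probability, sum \qref{llemma} over $j$, and absorb the resulting power of $\delta^{-1}$.)

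Adding the two bounds with $C_{10}=\max\{12C_4C_5^2,\,C_6\}$, $C_{11}=\tilde C_6\wedge(18C_4^2C_5^3)=\tilde C_7$, and $C_{12}=2M+2$ (using $\delta<1$ to fold the stray constant into $\delta^{-1}$) produces \qref{newrbound}. The only delicate point is bookkeeping rather than probability: verifying that one constant $C_{10}$ can serve both $\mathcal C(t_k)$ and $\mathcal L(t_k)$, and that the conditioning on $\mathcal A(t_{k-1})^c$ here is exactly the conditioning under which Lemmas \ref{pilem} and \ref{Lvscdf} were proved — which rests on the time-homogeneity of $X^n(t)$ already used there. No fresh concentration inequality is required; the statement simply repackages the earlier lemmas.
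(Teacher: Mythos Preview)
Your proposal is correct and follows essentially the same route as the paper: a union bound on $\mathcal D(t_k)^c$, with the $\mathcal C(t_k)$ piece handled by Lemma~\ref{pilem} (equation \qref{pibound}) and the $\mathcal L(t_k)$ piece by the pathwise bound \qref{mainmut} together with the Hoeffding estimate \qref{mutbound} from the proof of Lemma~\ref{Lvscdf}. The paper does read $\mathcal L(t_k)$ as the cumulative union and sums over time steps, which your parenthetical alternative anticipates; your treatment of the conditioning mismatch in that case (passing to the unconditional probability via a lower bound on $\mathbb P(\mathcal A(t_{k-1})^c)$) is in fact tidier than what the paper writes down.
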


\proof  This follows immediately from applying  (\ref{pibound}) with  (\ref{mainmut})
and (\ref{mutbound}) to the event\be
 \mathbb P(\mathcal D(t_k)^c|\mathcal A(t_{k-1})^c) 
 \le \mathbb P(  \mathcal C(t_k)|\mathcal A(t_{k-1})^c)+ \sum_{k\le \lceil T_{2}/\delta\rceil } \mathbb P (\mathcal L(t_{k})|\mathcal A(t_{k-1})^c). \nn \qed
\ee

The upshot of using   $\mathcal D(t_k)$ is that we may bound selection probability and total losses by quantities which are $\mathcal F(t_{k-1})$ measurable. Furthermore,  we may  we use $\mathcal D(t_k)$ for the event  $\mathcal T$ in  Lemma \ref{lohidevroye2} to produce concentration bounds for $h(t_k)$. 

\begin{lemma} \label{pandlbounds}
Under $\mathcal D(t_k)$, there exists $C_{13}$ such that
for $t\in \Delta t_k$, \begin{equation}
P_k^j(t,I_l) \in [\underline p_l(t_{k-1}),\bar p_l(t_{k-1})], \qquad   \Delta L(t_k) \in [ \underline L^n(t_{k-1}), \bar L^n(t_{k-1})], 
\end{equation}
where
\begin{align}
\bar p_l(t_{k-1})&=  P^2_k(t_{k-1},I_l)+C_{13}(h^n(t_{k-1})+\omega(\delta,
0) \delta
+\delta^2), \label{pstart}\\
\underline p_l(t_{k-1})&=  P^2_k(t_{k-1},I_l)-C_{13}(h^n(t_{k-1})+\omega(\delta,
0) \delta
+\delta^2), \label{pend}\\
 \bar L^n(t_{k-1}) &=\mu_1^n(t_{k-1},[0,\delta])+C_{10}\delta^2, \label{lstart}\\
  \underline L^n(t_{k-1}) &=\mu_1^n(t_{k-1},[0,\delta]).\label{lend}
\end{align}
\end{lemma}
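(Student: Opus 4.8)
The plan is to prove the loss bounds $(\ref{lstart})$–$(\ref{lend})$ and the selection-probability bounds $(\ref{pstart})$–$(\ref{pend})$ separately. In each case the point is that the event $\mathcal D(t_k)=\mathcal C(t_k)^c\cap\mathcal A(t_{k-1})^c\cap\mathcal L(t_k)^c$ is engineered precisely so that every pathwise error term produced by the earlier lemmas collapses to the canonical size $h^n_\delta(t_{k-1})+\omega(\delta,0)\delta+\delta^2$, and so that both endpoints in each interval are $\mathcal F(t_{k-1})$-measurable.

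For the loss bounds I would argue as follows. Since $\mathcal D(t_k)\subseteq\mathcal L(t_k)^c$, on $\mathcal D(t_k)$ we have $|\Delta L^n(t_k)-\mu_1^n(t_{k-1},[0,\delta])|\le C_{10}\delta^2$. Independently of any event, $\Delta L^n(t_k)\ge\mu_1^n(t_{k-1},[0,\delta])$ holds on every path, because every Species~1 particle lying in $[0,\delta)$ at time $t_{k-1}$ reaches the origin and mutates during $\Delta t_k$ — this is exactly the inequality behind $(\ref{mainmut})$ in the proof of Lemma~\ref{Lvscdf}. Combining the two gives $\mu_1^n(t_{k-1},[0,\delta])\le\Delta L^n(t_k)\le\mu_1^n(t_{k-1},[0,\delta])+C_{10}\delta^2$, which is $(\ref{lstart})$–$(\ref{lend})$.

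For the selection-probability bounds I would fix $\tau\in\Delta t_k$, $l\le M/\delta$, $j\in\{1,2\}$, and begin from Lemma~\ref{pnattk}:
\[
|P^j_k(\tau,I_l)-P^j_k(t_{k-1},I_l)|\le C_9\bigl(h^n_\delta(t_{k-1})+\pi_2^n(t_k)+(m^n_\delta(t_{k-1}))^2+\omega(\delta,0)\delta+\delta^2\bigr).
\]
On $\mathcal D(t_k)$ each term on the right is $O(h^n_\delta(t_{k-1})+\omega(\delta,0)\delta+\delta^2)$: $\mathcal C(t_k)^c$ gives $\pi_2^n(t_k)\le 2C_{10}\delta^2$ (any length-$\delta$ window is covered by two grid intervals), while $\mathcal A(t_{k-1})^c$ together with $(\ref{mbarbound})$ gives $m^n_\delta(t_{k-1})\le\bar m_\delta(t_{k-1})\le C_5\delta$, so $(m^n_\delta(t_{k-1}))^2\le C_5^2\delta^2$. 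It then remains to replace $P^j_k(t_{k-1},I_l)$ by $P^2_k(t_{k-1},I_l)$; for $j=2$ this is an identity, and for $j=1$, since $t_k-t_{k-1}=\delta$,
\[
|P^1_k(t_{k-1},I_l)-P^2_k(t_{k-1},I_l)|=\frac{|\mu_2^n(t_{k-1}^-,I_l+\delta)-\mu_2^n(t_{k-1}^-,I_l)|}{N_2^n(t_{k-1}^-)}\le C_4^{-1}\bigl(2h^n_\delta(t_{k-1})+\tilde\omega(\delta,t_{k-1})\bigr),
\]
where I insert the discretization $\tilde\mu_2(t_{k-1},\cdot)$ on the windows $I_l$ and $I_l+\delta$, use the definitions of $h^n_\delta$ and of $\tilde\omega$, and bound $1/N_2^n(t_{k-1}^-)\le 1/C_4$ (recall $N_2^n$ is non-increasing and $N_2^n(T_2)\ge C_4$). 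Then Lemma~\ref{omegalem2} gives $\tilde\omega(\delta,t_{k-1})\le C_8(\omega(\delta,0)\delta+\delta^2)$. Adding these estimates and choosing $C_{13}$ large enough (for instance $C_{13}=C_9(1+2C_{10}+C_5^2)+C_4^{-1}(2+C_8)$) yields $|P^j_k(\tau,I_l)-P^2_k(t_{k-1},I_l)|\le C_{13}(h^n_\delta(t_{k-1})+\omega(\delta,0)\delta+\delta^2)$ uniformly in $\tau\in\Delta t_k$ and $j=1,2$, which is $(\ref{pstart})$–$(\ref{pend})$.

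I do not expect any genuine difficulty here: the argument is assembly on top of Lemmas~\ref{omegalem2}, \ref{pnattk}, and \ref{Lvscdf}, together with the verification that $\mathcal D(t_k)$ is exactly strong enough to turn each pathwise error into the canonical form. The one step that needs a little care is the passage from $P^1_k$ to $P^2_k$ at time $t_{k-1}$, since these are values of the (roughly $O(\delta)$-sized) measure $\mu_2^n(t_{k-1}^-,\cdot)$ on two distinct length-$\delta$ windows; no crude bound suffices, so the comparison must be routed through the deterministic discretization, whose spatial modulus $\tilde\omega$ is genuinely of order $\omega(\delta,0)\delta+\delta^2$ by Lemma~\ref{omegalem2}.
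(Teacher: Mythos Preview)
Your proposal is correct and follows the same route the paper indicates: invoke Lemma~\ref{Lvscdf} (via $\mathcal L(t_k)^c$) and the pathwise lower bound for the loss, and invoke Lemma~\ref{pnattk} together with $\mathcal C(t_k)^c$ and $\mathcal A(t_{k-1})^c$ for the selection probabilities. The paper's own proof is a two-line citation of those lemmas; your write-up supplies the details it suppresses, in particular the passage from $P^1_k(t_{k-1},I_l)$ to $P^2_k(t_{k-1},I_l)$ by routing through $\tilde\mu_2$ and Lemma~\ref{omegalem2}, which is indeed the intended mechanism.
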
 

\begin{proof}
 We obtain (\ref{pstart})-(\ref{pend})  from  Lemma
\ref{pnattk}, and (\ref{lstart})-(\ref{lend}) follows from Lemma \ref{Lvscdf}. \end{proof}

\begin{lemma}
Let
\be
H^n(t_{k}): =\delta^3+\omega(\delta, 0) \delta^2
+\delta h^n(t_{k})+\delta^2\sum_{i <  k}h^n(t_i),
\ee
 and let $ \Pi$ be defined as in (\ref{hmainrec}). Then there are positive constants  $C_{16},\tilde C_{16},C_{17}$ such that  \be
\mathbb P(  \Pi^n (t_k) >C_{17}H^n(t_{k-1})|\mathcal D(t_k))   \le
\frac{ C_{16}}{\delta}
\exp(-\tilde C_{16}\delta ^5n).\label{tildepineq}
\ee
\end{lemma}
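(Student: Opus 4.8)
\textit{Proof plan.} Recall from (\ref{hmainrec}) that $\Pi^n(t_k)=\sum_{j=1}^2\max_{l\le M/\delta}\big|\pi_j^n(t_k,I_l)-\frac{\Delta\tilde L(t_k)}{\tilde N_2(t_{k-1})}\tilde\mu_2(t_{k-1},I_l)\big|$. The strategy is to sandwich each $\pi_j^n(t_k,I_l)$ between the $\mathcal F(t_{k-1})$-measurable endpoints $\underline L^n(t_{k-1})\underline p_l(t_{k-1})-\varepsilon$ and $\bar L^n(t_{k-1})\bar p_l(t_{k-1})+\varepsilon$ of Lemma \ref{pandlbounds}, with $\varepsilon$ of order $\delta^3$, using the state-dependent Hoeffding bound of Lemma \ref{lohidevroye2} (applied at $\Delta t_k$ via the time-homogeneity and strong Markov property of the PDMP, exactly as in the proof of Lemma \ref{pilem}); and then to show that, on $\mathcal D(t_k)$, both endpoints differ from the deterministic target $\frac{\Delta\tilde L(t_k)}{\tilde N_2(t_{k-1})}\tilde\mu_2(t_{k-1},I_l)$ by $\mathcal O(H^n(t_{k-1}))$. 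Conditioning on $\mathcal F(t_{k-1})$ turns the endpoints into constants, and since $\mathcal D(t_k)\subset\mathcal A(t_{k-1})^c$, estimates (\ref{mbarbound}) and $N_2^n(t_{k-1})\ge C_4$ give the a priori sizes $\bar L^n(t_{k-1}),\bar p_l(t_{k-1})=\mathcal O(\delta)$ and $\mu_1^n(t_{k-1},[0,\delta)),P_k^2(t_{k-1},I_l)=\mathcal O(\delta)$, which are what make the whole estimate small.

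\textbf{Concentration step.} Apply Lemma \ref{lohidevroye2} with $\mathcal T=\mathcal D(t_k)$; by (\ref{probid}), (\ref{newrbound}) and Lemma \ref{mlem}, the failure probability $r=r(\delta,n)=\mathbb P(\mathcal D(t_k)^c)$ satisfies $r\le\mathbb P(\mathcal D(t_k)^c\mid\mathcal A(t_{k-1})^c)+\mathbb P(\mathcal A(t_{k-1}))\le\frac{C}{\delta^2}\exp(-c\delta^3n)$. Choose $\varepsilon=c_0\delta^3$. On $\mathcal D(t_k)$ the hypotheses of Lemma \ref{pandlbounds}, hence of Lemma \ref{lohidevroye2}, hold with $\bar L=\bar L^n(t_{k-1})\le C\delta$ and $\underline L=\underline L^n(t_{k-1})\le C\delta$ (the degenerate case $\underline L^n=0$ being trivial, since then $\pi_j^n(t_k,I_l)\ge0=\underline L^n\underline p_l$). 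The Gaussian terms in (\ref{mainpithm4})--(\ref{mainpithm5}) become $\frac{2M}{\delta}\exp(-2n\varepsilon^2/\bar L)\le\frac{2M}{\delta}\exp(-c'\delta^5n)$ and likewise for $\underline L$, while the $r$-correction terms $\frac{M}{\delta}\big(\frac{r}{1-r}+\frac{2r\bar Ln}{(1-r)^2}\big)$ and $\frac{M}{\delta}(2r\bar Ln+r)$ are at most $\frac{Cn}{\delta^3}\exp(-c\delta^3n)$, which for $n>n^p(\delta)$ chosen large enough in terms of $\delta$ is again $\le\frac{C}{\delta}\exp(-c'\delta^5n)$. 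Summing over $j=1,2$ and taking complements, there is an event $G$ with $\mathbb P(G\mid\mathcal D(t_k))\ge1-\frac{C_{16}}{\delta}\exp(-\tilde C_{16}\delta^5n)$ on which
\be
\underline L^n(t_{k-1})\underline p_l(t_{k-1})-c_0\delta^3\le\pi_j^n(t_k,I_l)\le\bar L^n(t_{k-1})\bar p_l(t_{k-1})+c_0\delta^3\qquad(l\le M/\delta,\ j=1,2).
\ee

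\textbf{Deterministic gap.} It remains to show, on $\mathcal D(t_k)$, that $\big|\bar L^n(t_{k-1})\bar p_l(t_{k-1})-\frac{\Delta\tilde L(t_k)}{\tilde N_2(t_{k-1})}\tilde\mu_2(t_{k-1},I_l)\big|\le CH^n(t_{k-1})$, and the same for $(\underline L^n,\underline p_l)$. Substituting $\bar L^n(t_{k-1})=\mu_1^n(t_{k-1},[0,\delta))+C_{10}\delta^2$ and $\bar p_l(t_{k-1})=P_k^2(t_{k-1},I_l)+C_{13}(h^n(t_{k-1})+\omega(\delta,0)\delta+\delta^2)$ and multiplying out, every term other than $\mu_1^n(t_{k-1},[0,\delta))\,P_k^2(t_{k-1},I_l)$ is, by the $\mathcal O(\delta)$ bounds above, already of the form $\mathcal O(\delta^3+\omega(\delta,0)\delta^2+\delta h^n(t_{k-1}))=\mathcal O(H^n(t_{k-1}))$. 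For the remaining term, $\Delta\tilde L(t_k)=\tilde\mu_1(t_{k-1},[0,\delta))$ gives $|\mu_1^n(t_{k-1},[0,\delta))-\Delta\tilde L(t_k)|\le h^n(t_{k-1})$, and the elementary inequality $|\frac ab-\frac cd|\le C_4|a-c|+C_4^2c\,|b-d|$ applied to $P_k^2(t_{k-1},I_l)=\frac{\mu_2^n(t_{k-1},I_l)}{N_2^n(t_{k-1})}$ and $\frac{\tilde\mu_2(t_{k-1},I_l)}{\tilde N_2(t_{k-1})}$ yields
\be
\Big|P_k^2(t_{k-1},I_l)-\frac{\tilde\mu_2(t_{k-1},I_l)}{\tilde N_2(t_{k-1})}\Big|\le C_4h^n(t_{k-1})+C_4^2\,\tilde\mu_2(t_{k-1},I_l)\,|N_2^n(t_{k-1})-\tilde N_2(t_{k-1})|.
\ee
On $\mathcal L^c\subset\mathcal D(t_k)$ one has $|\Delta L^n(t_i)-\Delta\tilde L(t_i)|\le h^n(t_{i-1})+C_{10}\delta^2$ for every $i$ (from (\ref{mainmut}) and Lemma \ref{Lvscdf}), so summing and using $|N_2^n(0)-\tilde N_2(0)|=\mathcal O(1/n)$ with $k\delta\le T_2(\bar f)$ gives $|N_2^n(t_{k-1})-\tilde N_2(t_{k-1})|\le\sum_{i<k}h^n(t_i)+\mathcal O(\delta)$; multiplying by the $\mathcal O(\delta)$ factors $\Delta\tilde L(t_k)$ and $\tilde\mu_2(t_{k-1},I_l)$ produces exactly the $\delta^2\sum_{i<k}h^n(t_i)+\delta^3$ contribution. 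Collecting everything bounds the gap by $\mathcal O(\delta^3+\omega(\delta,0)\delta^2+\delta h^n(t_{k-1})+\delta^2\sum_{i<k}h^n(t_i))=\mathcal O(H^n(t_{k-1}))$; the lower endpoint is handled identically, the extra $\mu_1^n(t_{k-1},[0,\delta))\,C_{13}(h^n(t_{k-1})+\omega(\delta,0)\delta+\delta^2)$ term again being $\mathcal O(H^n(t_{k-1}))$.

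\textbf{Conclusion and main obstacle.} On $\mathcal D(t_k)\cap G$ the two steps combine to $|\pi_j^n(t_k,I_l)-\frac{\Delta\tilde L(t_k)}{\tilde N_2(t_{k-1})}\tilde\mu_2(t_{k-1},I_l)|\le CH^n(t_{k-1})+c_0\delta^3\le C_{17}H^n(t_{k-1})$, using $\delta^3\le H^n(t_{k-1})$ and absorbing the factor $2$ from summing over $j$ into $C_{17}$; hence $\Pi^n(t_k)\le C_{17}H^n(t_{k-1})$ there, and $\mathbb P(\Pi^n(t_k)>C_{17}H^n(t_{k-1})\mid\mathcal D(t_k))\le\mathbb P(G^c\mid\mathcal D(t_k))\le\frac{C_{16}}{\delta}\exp(-\tilde C_{16}\delta^5n)$, which is (\ref{tildepineq}). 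The delicate part is the deterministic-gap bookkeeping — above all tracking that the accumulated loss error $|N_2^n-\tilde N_2|$ enters only through the $\delta^2\sum_{i<k}h^n(t_i)$ term — together with checking that the concentration error $\varepsilon=\Theta(\delta^3)$ and the a priori size $\bar L^n=\Theta(\delta)$ conspire to the exponent $\delta^5n$ rather than a weaker rate, and that the various $\mathrm{poly}(1/\delta)$ and factor-of-$n$ prefactors from Lemma \ref{lohidevroye2} are absorbed into the exponential by choosing $n^p(\delta)$ suitably large.
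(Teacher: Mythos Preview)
Your proposal is correct and follows essentially the same approach as the paper's proof: both control $\Pi^n$ by passing through the intermediate quantity $\mu_1^n(t_{k-1},[0,\delta))P_k^2(t_{k-1},I_l)$, applying Lemma~\ref{lohidevroye2} with the $\mathcal F(t_{k-1})$-measurable envelopes of Lemma~\ref{pandlbounds} after conditioning on the path up to $t_{k-1}$, and then carrying out the same triangle-inequality bookkeeping (mirroring (\ref{tri1})--(\ref{hrecur})) to compare that intermediate with the discretization target, with the accumulated-loss term $|N_2^n-\tilde N_2|$ producing $\delta^2\sum_{i<k}h^n(t_i)$. The only cosmetic difference is that the paper first splits $\Pi^n=G^n+\tilde\Pi^n$ and then applies the Hoeffding step with the random deviation $\varepsilon=C_{15}H^n(t_{k-1})$, whereas you sandwich $\pi_j^n$ first with a fixed $\varepsilon=c_0\delta^3$ and do the deterministic comparison afterward; since $H^n(t_{k-1})\ge\delta^3$ these are interchangeable and yield the same $\delta^5n$ exponent.
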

\begin{proof}

 We begin with breaking up $\Pi^n$ as 
\begin{align}
 \Pi^n (t_k) &\le2\max_{l \le  M/\delta}\left|\frac{\tilde \mu_2(t_{k-1},I_l)}{\tilde
N_2(t_{k-1})}\Delta\tilde
 L(t_{k})- \mu^n_1(t_{k-1}, [0,\delta])P^2_k(t_{k-1},I)\right| \\
 &+\sum_{j = 1}^2\max_{l \le  M/\delta}\left|\pi_j^n(t_k,I_l)- \mu^n_1(t_{k-1}, [0,\delta])
P^2_k(t_{k-1},I_l)\right| \nn\\
&:=G^n(t_k)+ \tilde  \Pi^n(t_k). \nn
\end{align}

From Lemma \ref{pandlbounds},  we can rewrite $\mu_1^n(t_{k-1},[0,\delta])$ in terms of $\underline L^n(t_{k-1})$
and  $\bar L^n(t_{k-1})$, and also   $P^2_k(t_{k-1},I_l)$ in terms of  $\underline
p_l(t_{k-1})$ and $\bar p_l(t_{k-1})$,  from which we can then bound $\tilde \Pi^n(t_k)$, for some $C_{14}>0$, by

\begin{align}
&\mathbb P(\tilde \Pi^n(t_k)>\varepsilon|\mathcal D(t_k)) \label{justpi}\\
&\le \sum_{j = 1}^2 \mathbb P(\max_{l}\left|\pi_j^n(t_k,I_l)- \mu^n_1(t_{k-1},
[0,\delta])
P^2_k(t_{k-1},I_l)\right|>\varepsilon/2|\mathcal D(t_k)) \nn\\
&\le \sum_{j = 1}^2 \Big[ \mathbb P(\max_{l }(\pi_j^n(t_k,I_l)-
\mu^n_1(t_{k-1},
[0,\delta])
P^2_k(t_{k-1},I_l))>\varepsilon/2|\mathcal D(t_k))\nn\\
&+ \mathbb P(\min_{l }(\pi_j^n(t_k,I_l)-
\mu^n_1(t_{k-1},
[0,\delta])
P^2_k(t_{k-1},I_l))<-\varepsilon/2|\mathcal D(t_k))\Big]\nn \\
& \le \sum_{j = 1}^2 \Big[ \mathbb P(\max_{l }(\pi_j^n(t_k,I_l)-
\bar L^n(t_k)\bar
p^n_l(t_{k-1}))+C_{14}H(t_{k-1})>\varepsilon/2|\mathcal D(t_k))\nn\\
&+ \mathbb P(\min_{l }(\pi_j^n(t_k,I_l)-\underline L^n(t_k)\underline
p^n_l(t_{k-1}))-C_{14}H(t_{k-1})<-\varepsilon/2|\mathcal D(t_k))\Big]\nn. \end{align}

 For all paths in $\mathcal D(t_{k}),$ by calculations similar to (\ref{tri1})-(\ref{hrecur}), there exists  a positive constant $C_{15}>C_{14}$   such that
  \begin{align}
& G^n(t_k) \le C_{15}H^n(t_{k-1}).   
\end{align}
and thus
\begin{align}
\mathbb P( \Pi^n (t_k) >8C_{15}H^n(t_{k-1})|\mathcal D(t_{k}))
\le \mathbb P( \tilde \Pi^n(t_k)>4C_{15}H^n(t_{k-1})|\mathcal D(t_{k})). \label{pitilde}
\end{align}

Consider a path $\omega:[0,t_{k-1}]\rightarrow E$ such that $\omega \in \mathcal A(t_{k-1})^c$.  We now invoke (\ref{pitilde}), (\ref{justpi}), and  Lemma \ref{lohidevroye2} with  $\varepsilon(t_{k-1}) = C_{15}H^n(t_{k-1})$,
$C_{17} = 8C_{15}$, and $r(\delta, n) = \frac{C_{12}}{\delta} \exp(-C_{11}\delta ^3n)$ from (\ref{newrbound}) to obtain 
  \begin{align}
& \mathbb P( \Pi^n (t_k)>C_{17}H^n(t_{k-1};\omega)|\mathcal D(t_k)) \\
&\le  \sum_{j = 1}^2\mathbb P\left(\max _{l \le  M/\delta}\left( \pi^n_j(t_k, I_l)-
\bar L^n(t_{k-1};\omega)\bar
p^n_l(t_{k-1};\omega)
\right)> \varepsilon^n(t_{k-1};\mathbf{x})|\mathcal D(t_k) \right)\nn \\
&+ \sum_{j = 1}^2  \mathbb P\left(\min _{l \le  M/\delta}\left( \pi^n_j(t_k, I_l)-
\underline L^n(t_{k-1};\omega)\underline
p^n_l(t_{k-1};\omega)
\right)<- \varepsilon^n(t_{k-1};\mathbf{x})|\mathcal D(t_k) \right)\nn \\
&\le \frac{8M}{\delta}
\exp(-2C_{15}^2 H^n(t_{k-1};\omega)^2n/\bar L^n(t_{k-1};\omega))\\
&+\frac{2M}{\delta}\left(\frac{r}{1-r}+\frac{2rn\bar
L^n(t_{k-1};\omega)}{1-r}+2rn \bar
L^n(t_{k-1};\omega)+r\right)\\
&:= J_1(t_{k-1},n;\omega) +J_2(t_{k-1},n;\omega).
\end{align}
By increasing 
$n^p$ used in obtain (\ref{mbarbound}), if necessary, elementary calculations show that  for  $n>n^{p}$, 
\begin{equation}
J_2(t_{k-1},n;\omega)\le \frac{18M}{\delta^2} \exp(-C_{11}\delta
^3n/2) \cdot \bar
L^n(t_{k-1};\omega). 
\end{equation}
On the other hand, since $\mathcal D(t_k) \subset \mathcal A(t_{k-1})^c$, for $X(t_{k-1}) = \omega' \in \mathcal A(t_{k-1})$ we have \

\begin{equation}
\mathbb P( \Pi^n (t_k)>C_{17}H^n(t_{k-1};\omega')|\mathcal D(t_k)) = 0.
\end{equation}
From the law of total probability,
\begin{align}
&\mathbb P(  \Pi^n (t_k)>C_{17}H^n(t_{k-1})|\mathcal D(t_k))\\
&\le \mathbb E[J_1(t_{k-1},n) |\mathcal A(t_{k-1})^c]+\mathbb E[J_2(t_{k-1},n)|\mathcal A(t_{k-1})^c]  \nn\\
&\le \frac{ C_{16}}{\delta}\exp(-\tilde C_{16}\delta ^5n)\nn
\end{align}
for a sufficiently
small $\tilde C_{16}>0$ and sufficiently large $C_{16}>0$. In the last inequality, we used the simple pathwise bound of $H^n(t_k) \ge \delta^3$ and that  $\bar L = \mathcal O(\delta) $ under $\mathcal A(t_{k-1})^c$. 
\end{proof}

\subsubsection{Proof of Theorem \ref{finalh}}

We consider the events
\begin{align}
\mathcal H(t_k) = \cup_{l\le k} \{h^n(t_k) >  h^n(t_{k-1})+C_{17} H^n(t_{k-1})\},\\
\mathcal B(t_k;C) =\{ d(\tilde \mu(t_k),  \mu^n(t_k)) > C(\delta +\omega(\delta, 0))\}. 
\end{align}
Using the same argument given in Lemma \ref{hasympt},  under $\cap_{t_k\le T_2} \mathcal H(t_k)^c$ and a sufficiently large $C_{18}$,

\begin{equation}
 h^n(t_k) \le C_{18}  \mathcal ( \delta ^2+ \delta \omega(\delta,0)) \quad t_k \le T_2. \label{hnbound}
 \end{equation}We may then use (\ref{hnbound}) with Lemma \ref{ksgrid} to show that
for  a sufficiently large $C_{19}$,
\begin{equation}
\cap_{t_k\le T_2}\mathcal H(t_k)^c \subseteq \cap_{l\le k}
\mathcal B(t_k;C_{19})^c
\end{equation}
and that for sufficiently large $C_{20}$ and small $\tilde C_{20}$, 
\begin{align}
&\mathbb P\left(
\max_{t_k\le T_2}d(\tilde \mu(t_k),  \mu^n(t_k))  > C_{19} ( \delta +  \omega(\delta))\right) =  \mathbb P(\cup_{t_k \le T_2}
 \mathcal B(t_k;C_{19})) \label{almost}\\&\le \sum_{t_k \le T_2} \mathbb P(\mathcal H(t_k)|\mathcal D(t_k))
+\mathbb P(\mathcal D(t_k)^{c}) \nn\\
&\le \sum_{t_k \le T_2}  \mathbb P(  \Pi^n (t_k) >C_{17}H^n(t_{k-1})|\mathcal D(t_k))
+\mathbb
P(\mathcal D(t_k)^{c})\nn \\
&\le\frac{ C_{20}}{\delta^2}
\exp(-\tilde C_{20}\delta ^5n). \nn
\end{align}

To conclude, we may replace the maximum in (\ref{almost}) with a supremum. Indeed,
since $\tilde \mu(t)$ is constant during time intervals $\Delta t_k$, for $t \le T_2$ and   $K = \lfloor \frac{t}{\delta}\rfloor$, 
\begin{equation}
d(\tilde \mu(t),  \mu^n(t)) \le d(\tilde \mu(t_{K}),  \mu^n(t_{K}))+d( \mu^{n}(t),  \mu^n(t_{K})). 
\end{equation}
During $\Delta t_k$, an interval can change its total number by at most $\sum_{j = 1,2} \pi_j(t_k,I)$, and thus 

\begin{equation}
d(\mu^{n}(t),  \mu^n(t_{K})) \le \sum_{\substack{l \le M/\delta\\j = 1,2}} \pi_j(t_k, I_{l}) \le \frac{ M}{\delta} \sup_{\substack{l \le M/\delta\\j
= 1,2}} \pi_j(t_k, I_{l}).
\end{equation}
Then for sufficiently large $C_p> 2C_{19}$ and $C_{2}^p$, and sufficiently small $C_3^p$.
\begin{align}
&\mathbb P\left(
\sup_{t\le T_{2}}d(\tilde \mu(t),  \mu^n(t))  > C^p ( \delta +  \omega(\delta))\right)\label{probdistres}\\
&\le \mathbb P\left(\max_{t_{k}\le T_{2}}d(\tilde \mu(t_{k}),  \mu^n(t_{k}))>  C _{19}( \delta +  \omega(\delta))\right)+\sum_{j = 1}^2 \mathbb P\left(\sup_{\substack{l
\le M/\delta\\t_k \le  T_2}} \pi_j(t_k, I_{l})>   \frac{C _{p}\delta^{2}}{2M}\right)\nn\\
&\le \frac{ C_{2}^p}{\delta^2}
\exp(- C_{3}^p\delta ^5n). \nn
\end{align}

Finally, through a similar stitching argument argument appealed to in the proof of Theorem \ref{detscheme} at the end of Section \ref{disc}, we may  extend (\ref{probdistres}) to any $T'<T(\bar f)$, which yields Theorem  \ref{finalh}.

 \section{Proof of Theorem \ref{solution}} \label{prooftheo1}

For deriving an explicit solution for (\ref{kinint1})-(\ref{kinint2}), we
assume $(f_1(x,t), f_2(x,t))\in Z^2$ for $t \in T(\bar f)$, and show that
such a solution must be given by the explicit expressions (\ref{aexp})-(\ref{f2exp}),
and later verify  that this solution is in fact in $Z^2$. 

We begin by integrating (\ref{kinint2}) over space, giving
\begin{equation}
N_2(t) = N_2(0)-L(t). \label{formn2}
\end{equation}
This implies that $N_2(t)$ is differentiable, with 
\begin{equation}
\dot N_2(t) = -a(t). \label{ndotis}
\end{equation} 
Substituting (\ref{ndotis}) into  (\ref{kinint2}) yields the simple form
\begin{equation}
f_2(x,t) = \frac{N_2(t)}{N_2(0)}\bar f_2(x). \label{f2expr}
\end{equation}
Another substitution of (\ref{f2expr}) into (\ref{kinint1}) then gives
\begin{align}
f_1(x,t) = \bar f_1(x+t)+\int_0^t \frac{\bar
f_2(x+t-s)}{N_2(0)} a(s)ds. 
\end{align}

It remains to express $a(t)$ and $N_2(t)$ in terms of initial conditions.
Setting $x = 0$, we arrive at the closed equation
\begin{equation}\label{fluxeqn}
a(t) =\bar f_1(t)+\int_0^t \frac{\bar f_2(x+t-s)}{N_2(0)} a(s)ds.
\end{equation}
Denoting the probability density $\hat f_2(s) = \frac{\bar f_2(s)}{N_2(0)}$,
we may rewrite (\ref{fluxeqn}) as the integral equation
\begin{equation}\label{renewaleqn}
a(t) = \bar f_1(t)+\int_0^t  a(t-s)\hat f_2(s)ds.
\end{equation}
Equation (\ref{renewaleqn}) is a renewal equation, which has been studied
extensively in probability theory (see \cite[Chapter XI]{feller1974introduction}
for an introduction).  
It is well-known that 
there exists a unique solution for (\ref{renewaleqn}) given by
\begin{align}
a(t) &= \sum_{j = 0}^\infty \hat f_2^{*(j)}(t)*\bar f_1(t)
:= Q_{\hat f_2}(t)*\bar f_1(t),
\end{align}
where the exponent   $*(k)$ denotes $k$-fold self-convolution.  Then (\ref{nform})
follows directly  from (\ref{formn2}). For a locally
bounded density $p$, the  operator $Q_p(t)$   is also locally bounded, (see
Thm. 3.18
 of~\cite{liao2013applied}) .  Thus, it is clear that $a(t)$ and $N(t)$ are
both positive, locally bounded, and continuous for $0\le t < T(\bar f)$,
and subsequently that $(f_1(x,t), f_2(x,t))\in Z^2$. This completes the derivation
for  part (a) for Theorem \ref{solution}.

For showing part (b), the only ambiguity is in establishing for a fixed $t
\in [0,T(\bar f))$, the map $p \mapsto Q_p(t)$ is in $C(L^1([0,T(\bar f)),
\mathbb R_+)$. We  will use a probabilistic argument. Let $X_i^{(p)}$ $i
= 1,2,\dots$ denote a sequence of $[0,\infty)$ valued, iid random variables,
each
with a probability density $p \in L^1(\mathbb R_+)$.  The number of renewals
up to time $t <\infty$ is given by
\begin{equation}
N_p(t) = \sup\left\{k:\sum_{i = 1}^k X_i^{(p)} \le t\right\}.
\end{equation}
 In renewal theory, $Q_p(t)$ is the well-known  \textit{renewal density},
satisfying
\begin{equation}
\int_0^tQ_p(t) = \mathbb E[N_p(t)]-1.
\end{equation}
Each term in the sum of $Q_p(t)$ also has a probabilistic interpretation,
with 
\begin{equation}\label{probsumless}
c_k^{(p)}(t):=\int_0^t f^{*(k)}(t) = \mathbb P\left(\sum_{i = 1}^k X_i^{(p)}
\le t\right), \quad k \ge 1.
\end{equation}
Estimates for the decay of $c_k^{(p)}(t)$ as $k \rightarrow \infty$
can be obtained from Markov's inequality, with 
\begin{align}
 c_k^{(p)}(t) &=  \mathbb P\left(\exp\left(-\sum_{i
= 1}^k X_i^{(p)}\right) \ge e^{-t}\right) \label{sumjust1}\\
&\le e^t \mathbb E[\exp(-X_1^{(p)})]^k. \label{sumjust2}
\end{align}
As  $X^{(p)}$ is a non-deficient random variable,
 $\mathbb P(X = 0) \neq 1.$ Then    $\mathbb E[\exp(-X_1^{(p)})]<1$,
and thus $c_k^{(p)}(t)$    decays exponentially  as $k\rightarrow \infty$.

 To show continuity, fix $p \in L^1(\mathbb R_+)$, $t \in [0, T(\bar f))$,
and   let $\varepsilon >0$. From (\ref{sumjust1})-(\ref{sumjust2}), $c_k^{p}(t)$
is summable in $k$, so we may choose a $K>0$ such that 

\begin{equation}
\sum_{i = K}^\infty c_k^{(p)}(t)< \varepsilon/6.
\end{equation}
Since $\mathbb E[\exp(-X_1^{(p)})]$ varies
continuously with respect with
$p$ in $L^1(\mathbb R_+)$,  a similar calculation to (\ref{sumjust1})-(\ref{sumjust2})
implies that the map  $p \mapsto c_k^{(p)}(t)$ is also in $ C( L^1(\mathbb
R_+), \mathbb R_+)$ for all $k \ge 1$. Furthermore, tail sums of $c_k^{(p)}(t)$
also vary continuously in the $p$ variable.    Thus, there exists   $\delta>0$
such that if $\tilde p\in L^1(\mathbb R_+)$ satisfies $\|p-\tilde
p\| _{L^1(\mathbb R_+)} < \delta$, then both  
\begin{equation}
\sum_{i = K}^\infty c_k^{(\tilde p)}(t) < \varepsilon/3 \quad \hbox{and}
\quad  \sum_{i = 1}^{K-1} |c^{(p)}_k(t)-  c^{(\tilde p)}_k(t)| < \varepsilon/2
\end{equation}

hold. It then follows that
\begin{align}
 |Q_p(t)-Q_{\tilde p}(t)| &\le  \sum_{i = 1}^{K-1} |c^{(p)}_k(t)-  c^{(\tilde
p)}_k(t)|+\sum_{i = K}^\infty c_k^{( p)}(t)+\sum_{i = K}^\infty c_k^{(\tilde
p)}(t)<\varepsilon.
 \end{align}

\textbf{Acknowledgments:}{ The author wishes to thank Anthony Kearsley and Paul Patrone for providing guidance during his time as a National Research Council Postdoctoral Fellow at the National Institute of Standards and Technology, and also Govind Menon for several discussions regarding the manuscript.}

\bibliographystyle{siam}
\bibliography{refs}
\end{document}